\theoremstyle{plain}
\newtheorem{theorem}{Theorem}[section]
\newtheorem{prop}[theorem]{Proposition}
\newtheorem{lemma}[theorem]{Lemma}
\newtheorem{cor}[theorem]{Corollary}
\newtheorem{conj}[theorem]{Conjecture}
\theoremstyle{definition}
\newtheorem{definition}[theorem]{Definition}
\theoremstyle{remark}
\newtheorem{remark}[theorem]{Remark}
\newcommand{\A}{\mathbb{A}}
\newcommand{\C}{\mathbb{C}}
\renewcommand{\P}{\mathbb{P}}
\newcommand{\Q}{\mathbb{Q}}
\newcommand{\Z}{\mathbb{Z}}
\newcommand{\G}{\mathbb{G}}
\newcommand{\id}{\mathrm{id}}   
\newcommand{\codim}{\mathrm{codim}} 
\newcommand{\Bl}{\mathrm{Bl}} 
\newcommand{\spec}{\mathrm{Spec}} 
\newcommand{\Het}{\mathrm{H}_{\acute{e}t}} 
\newcommand{\Hrnr}[1]{\mathrm{H}_{{#1},\mathrm{nr}}} 
\newcommand{\mrH}{\mathrm{H}} 
\newcommand{\e}{\acute{e}} 
\DeclareMathOperator{\coker}{coker}
\DeclareMathOperator{\cl}{cl}
\DeclareMathOperator{\im}{im}
\DeclareMathOperator{\CH}{CH}
\DeclareMathOperator{\Hom}{Hom}
\DeclareMathOperator{\Pic}{Pic}
\newcommand{\Oc}{{\mathcal{O}}} 
\newcommand{\isomto}{\;\tilde{\rightarrow}\;}
\date{}
\title{On the failure of the integral Hodge/Tate conjecture for products with projective hypersurfaces}
\begin{document}
\author{Kees Kok}
\address{\parbox{0.9\textwidth}{KdV Institute for Mathematics, University of Amsterdam, Netherlands}}
\email{k.kok@uva.nl}
\begin{abstract}
In this paper we show the failure of the integral Hodge/Tate conjecture for the product of an Enriques surface with a smooth odd-dimensional projective hypersurface. To do this, we use a specialization argument of Colliot-Thélène (\cite{CT}) applied to Schreieder's refined unramified cohomology (\cite{Sch1}). The results obtained in this way give an interpretation of Shen's result (\cite{S}) in terms of refined unramified cohomology. Moreover, using this interpretation, we avoid the need to work over the complex numbers so that we may conclude that Shen's result also holds over general algebraically closed fields of characteristic not 2.

\end{abstract}
\maketitle

\section{Introduction}
Let $X$ be a smooth projective complex variety and write $\cl^i_\Q\colon \CH^i(X)_\Q\to\mrH^{2i}(X,\Q(i))$ for the rational cycle class map from the Chow group to the Betti cohomology of $X$. It is known that its image is contained in the Hodge classes $\mathrm{Hdg}^{2i}(X,\Q):=\mrH^{2i}(X,\Q)\cap\mrH^{i,i}(X)$, where $\mrH^k(X,\Q)\otimes\C\cong\bigoplus_{p+q=k}\mrH^{p,q}(X)$ is the Hodge decomposition of $X$. The famous \emph{Hodge conjecture}, first introduced by W.V.D. Hodge in \cite{original-HC} and later modified by A. Grothendieck \cite{adjusted-HC}, predicts that all the Hodge classes are algebraic. The modern formulation is as follows.
\begin{conj}[Hodge Conjecture]\label{HC}
Let $X$ be a smooth projective complex variety, then $\im(\cl^i_\Q)=\mathrm{Hdg}^{2i}(X,\Q)$.
\end{conj}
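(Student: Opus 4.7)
The statement displayed above is the Hodge Conjecture itself, one of the seven Clay Millennium Prize Problems, and no proof is known. I cannot offer a genuine solution; what follows is only an outline of strategies that have produced partial progress, together with the structural obstacle that prevents them from giving the full statement.

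The first step is the only case that is actually proved in full generality: $i=1$, the Lefschetz $(1,1)$-theorem. Via the exponential exact sequence $0\to\Z\to\cO_X\to\cO_X^\times\to 0$ and the induced long exact sequence in Betti cohomology, one identifies $\Pic(X)$ with $\ker\bigl(\mrH^2(X,\Z)\to\mrH^2(X,\cO_X)\bigr)$, and rationally this kernel equals $\mathrm{Hdg}^2(X,\Q)$; hard Lefschetz then gives the complementary case $i=\dim X-1$ by dualizing. The next step would be to package the further fragments that are known: Tate's and Shioda's results on Fermat hypersurfaces, Tankeev's results on many simple abelian varieties, and the Kuga--Satake construction, which reduces $\mrH^2$ of $K3$ and certain hyperk\"ahler varieties to divisor classes on auxiliary abelian varieties.

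A more ambitious step would be to invoke the formalism of absolute Hodge cycles (Deligne) and the motivic reformulation, attempting to deform a given Hodge class through a family of Hodge structures to a fiber on which an algebraic representative can be exhibited. This is the strategy underlying the proof for abelian varieties of CM type and for classes captured by the density of Noether--Lefschetz loci. In a parallel direction one would try to lift Hodge classes to Chow motives via Murre's conjectures and the Bloch--Beilinson filtration, producing the required cycle by splitting the diagonal using correspondences of Hodge-theoretic origin.

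The fundamental obstacle, and the reason the conjecture has stayed open for over seventy years, is that none of these techniques provides a general machine for constructing an algebraic subvariety, or even a rational combination of such, from a Hodge class that has no a priori geometric source. In contrast with the integral variant, which fails for many known reasons (Atiyah--Hirzebruch torsion obstructions, Koll\'ar's examples, Benoist--Ottem, and, as the present paper adds, products with Enriques surfaces and odd-dimensional hypersurfaces), the rational conjecture is immune to Brauer-theoretic or unramified-cohomology obstructions, so the tools used elsewhere in this paper do not apply. Within the scope of the present work Conjecture~\ref{HC} appears only as motivation, and my honest ``plan'' is to record this state of affairs rather than attempt what has not been done.
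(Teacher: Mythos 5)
You have correctly recognized that Conjecture~\ref{HC} is the Hodge Conjecture itself, which the paper states only as motivation and does not (and could not) prove; your survey of the known partial results and the obstruction to a general proof is accurate. There is nothing in the paper to compare against, so no further action is needed.
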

The Hodge conjecture thus says that topological properties of algebraic varieties can be studied algebraically. We note that it also has a motivic nature as it implies all standard conjectures in characteristic zero \cite[Chapitre 5]{Andre}. A lot of cases are known, but in general the conjecture is wide open. The interested reader can look at \cite{known-cases} for a nice overview.

In view of Hodge's original statement, one could ask what happens if $\Q$ is replaced by $\Z$. This is known as the \emph{integral} Hodge conjecture.
\begin{conj}[Integral Hodge Conjecture]\label{IHC}
Let $X$ be a smooth projective complex variety, then all integral Hodge classes are integral linear combinations of algebraic cycles, that is $\im(\cl^i)=\mathrm{Hdg}^{2i}(X,\Z):=\mrH^{2i}(X,\Z)\cap\mrH^{i,i}(X)$.
\end{conj}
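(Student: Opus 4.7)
The final statement is the Integral Hodge Conjecture (Conjecture \ref{IHC}), which is not a theorem but a long-standing open question, and in fact is by now well understood to be false in general: Atiyah and Hirzebruch first exhibited torsion Hodge classes of degree $\geq 4$ that are not algebraic, and subsequent work of Koll\'ar, Totaro, Voisin, Colliot-Th\'el\`ene--Voisin, Schreieder, and others produced many further counterexamples. A proof at the stated level of generality is therefore impossible, and since the present paper is itself devoted to exhibiting new counterexamples, the natural plan splits into two parts: (a) verifying the conjecture in settings where it does hold, and (b) locating the cohomological obstructions that prevent it from holding elsewhere. I sketch both.

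For (a), the basic inputs are the Lefschetz $(1,1)$-theorem for $i=1$ (which settles the conjecture for divisors on any smooth projective complex variety), Poincar\'e duality for $i=\dim X$ (settling it for zero-cycles up to degree issues), and Voisin's theorem for uniruled threefolds and certain Fano fourfolds. In each case one decomposes an integral Hodge class $\alpha\in\mathrm{Hdg}^{2i}(X,\Z)$ as a rational algebraic class plus a torsion correction and realizes the correction geometrically using fibration or section structure present on $X$. For a product $S\times Y$ with $S$ an Enriques surface and $Y$ a smooth projective hypersurface, one would try to combine a K\"unneth decomposition with such a section argument, reducing statements about $S\times Y$ to algebraicity on the factors, possibly after blowing up.

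For (b), which is the paper's actual objective, algebraicity is tested via unramified cohomology groups of the form $\Hnr^{2i-1}(X,\Q/\Z(i-1))$ and their refined versions in the sense of Schreieder; a nontrivial class in such a group obstructs surjectivity of the cycle class map onto the group of integral Hodge classes. The main obstacle, and the core technical content one would have to supply, is the construction of such an unramified class on $S\times Y$. The strategy, following the paper's abstract, is to take a $2$-torsion class already present on the Enriques factor $S$, transport it to $S\times Y$, and show by a Colliot-Th\'el\`ene--type specialization argument \cite{CT}, applied to Schreieder's refined unramified cohomology \cite{Sch1}, that the resulting class remains nontrivial for the very general smooth hypersurface $Y$ of appropriate degree. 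It is precisely this specialization step — showing the obstruction neither dies nor becomes trivially algebraic in the limit — that would be the crux, and that is where any plan in the style of (a) cannot succeed.
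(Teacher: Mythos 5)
You correctly recognized that the boxed statement is a conjecture, not a theorem: the paper neither proves it nor claims to, and in fact the paper's whole purpose is to exhibit further counterexamples to it (for products of an Enriques surface with odd-dimensional hypersurfaces). There is therefore no "paper's proof" to compare against; the paper merely states Conjecture~\ref{IHC}, records that it is implied by and stronger than the rational Hodge conjecture, and immediately cites Atiyah--Hirzebruch, Koll\'ar, and others for its failure. Your survey in parts (a) and (b) of where the conjecture holds and how one obstructs it is accurate and consistent with the paper's framing, and your identification of the Colliot-Th\'el\`ene specialization argument applied to Schreieder's refined unramified cohomology as the crux of part (b) matches what the paper actually carries out in \Cref{main section}. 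In short: no proof is expected, none is given in the paper, and your assessment is correct.
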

\Cref{IHC} clearly implies \Cref{HC} and trivially holds for $i=0,\dim(X)$. The so called `Lefschetz-(1,1)-theorem' due to Solomon Lefschetz \cite{Lefschetz} gives the $i=1$ case. It turns out however that \Cref{IHC} fails to be true in general.

This failure was first established by Atiyah and Hirzebruch in \cite{AH}. They showed that a certain \emph{torsion} cohomology class, which is automatically Hodge, cannot be algebraic. Kollár was the first to find a Hodge class of infinite order that is not algebraic, whereas a multiple of this class is, \cite{Kol}. Thoughout the years, many more counterexamples for the integral Hodge conjecture were found, \cite{SV}, \cite{OS}, \cite{BO}, \cite{CT}, \cite{CT-V}, \cite{S}, \cite{Diaz1}, \cite{Diaz2} to name a few relatively recent ones.

Let us explain how this paper relates to some of the aforementioned ones. In \cite{BO} Benoist and Ottem showed that certain products of a projective curve with an Enriques surface violated \Cref{IHC}. This violation was given an interpretation in terms of \emph{unramified cohomology} by Colliot-Thélène in \cite{CT}. This interpretation was based on the relation between the failure of the integral Hodge conjecture in codimension 2 and the non-vanishing of certain unramified cohomology groups, observed by Colliot-Thélène and Voisin, \cite[Théorème 3.7]{CT-V}. 

Later, Shen gave in \cite{S} an interpretation of \cite{BO} in terms of a topological obstruction to algebraicity. A careful analysis of this obstruction allowed Shen to show the existence of non-algebraic Hodge classes on a product of a very general odd-dimensional projective hypersurface of degree $\geq 3$ with an Enriques surface. Shen ended his introduction by asking whether his result also has an interpretation in terms of unramified cohomology, \cite[Remark 1.6]{S}. This paper aims to give this interpretation. In the process, we replace Shen's topological obstruction arguments by purely algebraic ones and doing so, we avoid the need to work over the complex numbers. Instead, in this paper we shall work over an algebraically closed field $k$ of any characteristic (and we shall later specify to $\mathrm{char}(k)\neq 2$). Consequently, we get results involving the \emph{Integral Tate Conjecture}, which we shall describe now.

Let $X$ be a smooth projective variety over $k$ and let $\ell\neq\mathrm{char}(k)$ be a prime. Let $k_0\subseteq k$ be any subfield over which $X$ is defined so that $\bar k_0=k$, that is, there exists a variety $X_0$ over $k_0$ so that $X_0\times_{k_0}k=X$. There is a cycle class map $\cl^i_{\Z_\ell}\colon\CH^i(X)_{\Z_\ell}\to\Het^{2i}(X,\Z_\ell(i))$ and it is known that its image is contained in $\varinjlim_{k_0\subseteq k'}\Het^{2i}(X,\Z_\ell(i))^{G_{k'}}$, where the limit runs over all finite extensions $k'$ of $k_0$ and $G_{k'}$ is the absolute Galois group of $k'$, \cite[ Chapter VI \textsection 9]{Milne} and \cite[Cycle \textsection 2]{SGA42}.

\begin{conj}[Integral Tate Conjecture as in \cite{Schoen}]\label{ITC}
Assume $k_0$ is finite over its prime field, then
\[
\im(\cl^i_{\Z_\ell})=\varinjlim_{k_0\subseteq k'}\Het^{2i}(X,\Z_\ell(i))^{G_{k'}}.
\]
\end{conj}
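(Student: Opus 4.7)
The statement is the classical Integral Tate Conjecture, which remains wide open in general, so what follows is not a genuine full proof plan but an outline of the natural strategy together with the precise point where it breaks down — which is in fact the starting point of the paper itself.

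\textbf{Reductions and classical cases.} First I would dispose of the boundary cases $i=0,\dim(X)$, where both sides equal $\Z_\ell$ and the equality is tautological. The codimension one case is essentially Tate's theorem on divisors: the Kummer sequence gives an injection $\Pic(X_0)\otimes\Z_\ell\hookrightarrow\Het^2(X,\Z_\ell(1))$ whose image lies inside the Galois invariants, and the quotient is controlled by the $\ell$-primary Brauer group. Thus in codimension one the assertion reduces to finiteness of $\mathrm{Br}(X)\{\ell\}$, which is known in many geometric situations over finite fields. This case is therefore tractable by existing machinery.

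\textbf{Higher codimension.} For $i\geq 2$ I would attempt a three-step program. Step one: establish the rational Tate conjecture, i.e.\ the surjectivity of $\cl^i_{\Q_\ell}$ onto $\Het^{2i}(X,\Q_\ell(i))^{G_{k'}}$, using motivic or Lefschetz-type input appropriate to the class of varieties at hand. Step two: promote this to the integral level by analyzing the torsion in $\coker(\cl^i_{\Z_\ell})$; the Bloch--Ogus spectral sequence and the coniveau formalism relate this torsion to unramified cohomology, so the obstruction can be localized in groups of the form $\Hnr^{2i-1}(X,\Q_\ell/\Z_\ell(i))$. Step three: compute or bound those unramified groups, hoping to kill them geometrically; by Voevodsky's theorem on the Bloch--Kato conjecture, one can, at the prime $\ell$, translate the calculation into Milnor $K$-theory mod $\ell$.

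\textbf{The main obstacle.} Step three is exactly where the conjecture fails in general, and the present paper is a manifestation of this failure: via Colliot-Thélène's specialization argument applied to Schreieder's refined unramified cohomology, one detects nonzero classes in $\Hnr^{2i-1}$ on $E\times Y$, with $E$ an Enriques surface and $Y$ a very general odd-dimensional hypersurface, thereby producing Tate classes not in $\im(\cl^i_{\Z_\ell})$. So a realistic ``plan'' is not to prove the conjecture in general, but to isolate the unramified obstruction explicitly and verify its vanishing in favorable classes (rationally connected threefolds, some Fano fourfolds, etc.). A proof in full generality would require a genuinely new idea bypassing the unramified cohomology obstruction; none is currently in sight, and the results in this paper show that any such idea must confront explicit families of counterexamples.
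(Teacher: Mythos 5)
You have correctly recognized that the statement you were handed is a \emph{conjecture}, not a theorem: the paper does not prove it, and indeed the main results of the paper (\Cref{failure ITC}, \Cref{gen failure ITC}, \Cref{var of lines failure}) exhibit explicit counterexamples to the codimension-$(n+1)$ case on products $X_{\bar\eta}\times S$ of an Enriques surface $S$ with the geometric generic fibre $X_{\bar\eta}$ of a Lefschetz pencil of odd-dimensional hypersurfaces. Your outline of the standard strategy and of the obstruction — the failure of the integral statement is governed by (refined) unramified cohomology in degree $2i-1$, via the isomorphism $Z^i(X)[\ell^r]\cong \Hrnr{i-2}^{2i-1}(X,\mu_{\ell^r})/\Hrnr{i-2}^{2i-1}(X,\Z_\ell)$ of Schreieder, and the present paper produces a nonzero class there by Colliot-Thélène-style specialization — is an accurate description of the mechanism the paper actually uses, so there is nothing to fault in your answer beyond small notational slips (the paper's Enriques surface is $S$, not $E$, and the counterexample lives on a geometric generic fibre rather than literally a very general member, though these are identified in \Cref{closing remarks}). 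Since the correct response to a false conjecture is to refuse to prove it and point to the obstruction, your proposal is appropriate.
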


Our main result is the following.
\begin{theorem}[cf. \Cref{failure ITC}]\label{intro thm}
Let $\mathcal X\to\P^1$ be a Lefschetz pencil of odd-dimensional projective hypersurfaces of degree $d\geq 3$ over a field $k=\bar k$ with $\mathrm{char}(k)\neq 2$ and let $S$ be an Enriques surface over the same field $k$. Then the integral Tate conjecture fails for $X_{\bar\eta}\times S$, where $X_{\bar\eta}$ is the geometric generic fibre of $\mathcal X\to\P^1$.
\end{theorem}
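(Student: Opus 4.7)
My plan is to reformulate the failure of the integral Tate conjecture as the non-vanishing of an unramified cohomology group (in the refined sense of \cite{Sch1}), produce such a non-trivial class on a convenient closed fibre of the pencil, and then propagate it to the geometric generic fibre by the specialization technique of \cite{CT}.

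\textbf{Step 1 (Reformulation).} I would first invoke the $\ell=2$ refinement, due to Schreieder, of the Colliot-Thélène--Voisin bridge between failure of the integral Hodge/Tate conjecture in codimension $2$ and non-vanishing of unramified cohomology. Concretely, for a smooth projective variety $Y$ over $k = \bar k$ with $\mathrm{char}(k) \neq 2$, a non-trivial class in $\Hnr^3(Y,\Z/2)$ (or in the appropriate refined unramified group $\Hrnr{3}_j(Y,\Z/2)$ in Schreieder's sense) that is not hit by the cycle map gives an obstruction to the integral Tate conjecture for $Y$. Applying this with $Y = X_{\bar\eta} \times S$ reduces \Cref{intro thm} to producing a non-trivial class in $\Hrnr{3}(X_{\bar\eta}\times S,\Z/2)$ that is not algebraic.

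\textbf{Step 2 (Construction of a class).} The Enriques surface $S$ contributes its double cover class $\beta \in \mrH^1(S,\Z/2)$, whose square $\beta^2 \in \mrH^2_{\mathrm{nr}}(S,\Z/2)$ is well-known to be non-zero. To obtain a class in $\mrH^3_{\mathrm{nr}}$ of the product, I would cup $\beta^2$ with a class $\alpha$ built from the pencil. Since $\mrH^1(X_{\bar\eta},\Z/2)=0$ by weak Lefschetz, $\alpha$ cannot come from ordinary $\mrH^1$ of the fibre; it must come from the \emph{total space} $\mathcal X$ (which has extra cohomology coming from the base $\P^1$ and the vanishing cycles of the pencil), pulled back or transferred to the generic fibre. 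This is precisely where Schreieder's refined framework is essential: it allows one to book-keep classes whose ramification is controlled only in sufficiently high codimension, which is the natural habitat for classes produced from the monodromy/vanishing cycle data of the Lefschetz pencil.

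\textbf{Step 3 (Specialization).} With the candidate class $\alpha \cup \beta^2 \in \mrH^3_{\mathrm{nr}}(\mathcal X \times S, \Z/2)$ (or its refined analogue) in hand, I would apply the specialization argument of \cite{CT} to the family $\mathcal X \times S \to \P^1$. Unramified classes restrict well to fibres, and a class that is non-zero on some closed fibre $X_{t_0}\times S$ (where one can compare directly to the Benoist--Ottem situation) forces the restriction to the geometric generic fibre $X_{\bar\eta}\times S$ to be non-zero as well. Conversely, if the class on $X_{\bar\eta}\times S$ were algebraic, the specialization of the representing cycle would realize the class algebraically on $X_{t_0}\times S$, contradicting what we arranged on the special fibre. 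Combining Steps 1--3 yields the failure of the integral Tate conjecture on $X_{\bar\eta}\times S$.

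The main obstacle is Step 2: constructing the class $\alpha$ and verifying that $\alpha\cup\beta^2$ is genuinely unramified on the product in the refined sense. Ordinary cohomology of $X_{\bar\eta}$ is too poor to host $\alpha$ directly, so one must pass through Schreieder's refined unramified cohomology of the pencil total space and track the residues along all divisorial and higher-codimension components; the specialization in Step 3, once the class is constructed, should be a clean application of the techniques in \cite{CT} combined with \cite{Sch1}.
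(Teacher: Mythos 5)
Your Step~1 is the correct framework, but the rest of the plan has two structural problems that would derail the argument.

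First, your degree count is off because you are implicitly treating the pencil as a family of curves. The hypersurfaces here have dimension $2n-1$, and the paper works with cycles of codimension $n+1$ on $X_{\bar\eta}\times S$; the relevant obstruction group is $\Hrnr{n-1}^{2n+1}(X_{\bar\eta}\times S,\mu_2)$, not $\mrH^3_{\mathrm{nr}}$. Consequently the class contributed by $X_{\bar\eta}$ is not a degree-1 class cupped up from the total space: it lives in the \emph{middle} cohomology $\Het^{2n-1}(X_{\bar\eta},\mu_2)$, which is large. Your assertion that ``ordinary cohomology of $X_{\bar\eta}$ is too poor to host $\alpha$ directly'' and that $\alpha$ must be pulled back from $\mathcal X$ is based on the wrong degree ($\mrH^1$); in fact the whole point is to exploit $\Het^{2n-1}(X_{\bar\eta})$ directly. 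The class from $S$ is also not $\beta^2$ for the double-cover class $\beta$; what is needed is a class in $\Het^2(S,\mu_2)$ (the non-trivial Brauer class) whose key property is that it does \emph{not} lift along $\Het^2(S,\mu_4)\to\Het^2(S,\mu_2)$ — this is what forces non-algebraicity and non-liftability of the product class. You never identify or use this property.

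Second, your Step~3 inverts the specialization argument. You suggest specializing to a smooth closed fibre where ``one can compare directly to the Benoist--Ottem situation''; but those smooth fibres are $(2n-1)$-dimensional hypersurfaces, not curves, so there is no Benoist--Ottem input available there. The actual mechanism specializes to the \emph{singular} fibre $X_0$ of the Lefschetz pencil (with its ordinary double point $x_0$) and takes the residue along the exceptional quadric $D$ of the blow-up of $x_0$. The crucial non-triviality input is that the surjective map $cosp^\vee\colon\Het^{2n-1}(X_{\bar\eta},\Lambda)\to\Het^{2n-1}(X_0\setminus\{x_0\},\Lambda)$ composed with the residue to $\Het^{2n-2}(D,\Lambda)$ is non-zero (\Cref{non-zero residue}, \Cref{Existence Non-zero ResidueSp}), and then one needs the comparison between $cosp^\vee$ and the specialization map (\Cref{comparison-Lefschetz}, which rests on the surjectivity statement \Cref{Lefschetz Extendable}) to transport this to Schreieder's $sp_b$-machinery. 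None of this appears in your plan: you would need to replace your vague ``unramified classes restrict well to fibres'' by the concrete residue-at-the-node argument, and you would need a way to produce a middle-degree class on the geometric generic fibre whose specialization has non-zero residue on $D$ — which is exactly the cospecialization/Verdier-duality step you have omitted.
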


As mentioned before, this result is due to an interpretation of \cite{S} in terms of unramified cohomology. More precisely, our approach relies on the theory of \emph{refined unramified cohomology} developed by Schreieder in \cite{Sch1}, which generalizes the obstruction given in \cite{CT-V}. To give a more accurate statement of our main result for the moment, we quickly recall that the refined unramified cohomology group of $X$ with coefficients in $A$ is denoted by $\Hrnr{j}^i(X,A)$ (see \Cref{ref unram cohom} for precise definitions and statements), then Schreieder showed the following.
\begin{theorem}[{\cite[Theorem 7.7]{Sch1}}]
There is an isomorphism 
\[
\frac{\Hrnr{i-2}^{2i-1}(X,\mu_{\ell^r}^{\otimes i})}{\Hrnr{i-2}^{2i-1}(X,\Z_\ell(i))}\cong\coker(\cl^i_{\Z_\ell}\colon \CH^i(X)_{\Z_\ell}\to\Het^{2i}(X,\Z_\ell(i)))[\ell^r].
\]
\end{theorem}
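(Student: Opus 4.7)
The plan is to combine Schreieder's refined unramified cohomology machinery with the Bockstein long exact sequence for multiplication by $\ell^r$. Two structural ingredients from \cite{Sch1} would be taken for granted: (a) for each fixed filtration index $j$, the functors $\Hrnr{j}^{\bullet}(X,-)$ admit a long exact sequence in response to a short exact sequence of $\ell$-adic coefficient sheaves, exactly as ordinary \'etale cohomology does; and (b) refined unramified cohomology in degree $2i$ at the appropriate filtration level is canonically identified with the cokernel of the integral $\ell$-adic cycle class map, namely
\[
\Hrnr{i-2}^{2i}(X,\Z_\ell(i)) \;\cong\; \coker\bigl(\cl^i_{\Z_\ell}\colon \CH^i(X)_{\Z_\ell}\to \Het^{2i}(X,\Z_\ell(i))\bigr).
\]
Statement (b) should be read as a $\Z_\ell$-integral refinement of the classical Bloch--Ogus/Colliot--Th\'el\`ene comparison between cycle classes and the top coniveau filtration piece, and is built into Schreieder's framework.

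Given (a) and (b), the proof is then a formal Bockstein calculation. Feeding the short exact sequence $0 \to \Z_\ell(i) \xrightarrow{\cdot\ell^r} \Z_\ell(i) \to \mu_{\ell^r}^{\otimes i} \to 0$ into the long exact sequence at level $j = i-2$ produces
\[
\Hrnr{i-2}^{2i-1}(X,\Z_\ell(i)) \xrightarrow{\bmod \ell^r} \Hrnr{i-2}^{2i-1}(X,\mu_{\ell^r}^{\otimes i}) \xrightarrow{\partial} \Hrnr{i-2}^{2i}(X,\Z_\ell(i)) \xrightarrow{\cdot\ell^r} \Hrnr{i-2}^{2i}(X,\Z_\ell(i)),
\]
from which exactness immediately yields
\[
\Hrnr{i-2}^{2i-1}(X,\mu_{\ell^r}^{\otimes i})\big/\Hrnr{i-2}^{2i-1}(X,\Z_\ell(i)) \;\cong\; \Hrnr{i-2}^{2i}(X,\Z_\ell(i))[\ell^r],
\]
and the right-hand side is $\coker(\cl^i_{\Z_\ell})[\ell^r]$ by (b).

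The main obstacle I expect is establishing (b): the integral comparison between the cokernel of the cycle class map and the refined unramified cohomology group at the precise filtration level $j = i-2$. This calls for a careful analysis of the Gersten complex and coniveau spectral sequence with $\Z_\ell(i)$-coefficients, and in particular one must verify that passage to the inverse limit $\Z_\ell(i) = \varprojlim \mu_{\ell^r}^{\otimes i}$ interacts correctly with the filtration, presumably via a Mittag-Leffler or compact-support argument. By contrast, step (a) is a relatively formal consequence of the sheaf-theoretic definitions, and the Bockstein step itself is just homological algebra once both inputs are in place.
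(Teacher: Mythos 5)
The theorem you are attempting is cited by the paper from \cite[Theorem 7.7]{Sch1} without re-proving it, so I'll assess your argument on its own terms.

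Your overall strategy — Bockstein long exact sequence plus an identification of $\coker(\cl^i_{\Z_\ell})$ with a refined unramified cohomology group — is in the right spirit, but both of your structural inputs are stated incorrectly and the combination does not close the argument.

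\textbf{On input (b), the filtration index is off by one.} From the Gersten-type long exact sequence (the paper's Lemma~2.7, i.e.\ \cite[Lemma 5.8]{Sch1}) applied at level $j=i$ in degree $2i$, one sees
\[
\bigoplus_{x\in X^{(i)}}\Het^{0}(\kappa(x),\Z_\ell)\xrightarrow{\ \cl^i\ }\Het^{2i}(X,\Z_\ell(i))\twoheadrightarrow \Hrnr{i-1}^{2i}(X,\Z_\ell(i)),
\]
so the integral cokernel of the cycle class map is $\Hrnr{i-1}^{2i}(X,\Z_\ell(i))$, \emph{not} $\Hrnr{i-2}^{2i}(X,\Z_\ell(i))$. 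This level-$(i-1)$ identification is exactly what the paper uses in the proof of its Lemma~2.14 (the ``liftable lemma''). The transition map $\Hrnr{i-1}^{2i}\to\Hrnr{i-2}^{2i}$ is in general neither injective nor surjective: its failure to be injective is controlled by $\bigoplus_{x\in X^{(i-1)}}\Het^2(\kappa(x))$ via the Gersten sequence at level $j=i-1$, and there is no a priori reason for this to vanish or even to be torsion-free. So your assumed isomorphism $\Hrnr{i-2}^{2i}(X,\Z_\ell(i))\cong\coker(\cl^i_{\Z_\ell})$ does not follow from the framework, and the burden of the actual proof is precisely to bridge the gap between degree $2i$ at level $i-1$ and degree $2i-1$ at level $i-2$.

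\textbf{On input (a), the Bockstein sequence does not descend to the $\Hrnr{j}$ groups.} The groups $\mathrm{H}^{\bullet}(F_jX,-)$ for a \emph{fixed} $j$ do carry long exact sequences in coefficients, since they are filtered colimits of étale cohomology over opens. But $\Hrnr{j}^{\bullet}(X,-)$ is by definition the \emph{image} of the transition map $\mathrm{H}^{\bullet}(F_{j+1}X,-)\to\mathrm{H}^{\bullet}(F_jX,-)$, and the image functor does not preserve exactness: applying ``image of the vertical maps'' to a morphism of long exact sequences does not yield a long exact sequence. So the displayed four-term exact sequence you write for $\Hrnr{i-2}^{\bullet}$ is unjustified. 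Even granting (a), the Bockstein would only identify the quotient with $\Hrnr{i-2}^{2i}(X,\Z_\ell(i))[\ell^r]$, and by the previous paragraph this is not the same as $Z^i(X)[\ell^r]$. The nontrivial content of Schreieder's Theorem~7.7 is a diagram chase through the Gersten sequences at the two adjacent levels $i-1$ and $i-2$ that produces the correct comparison; your proposal skips it.
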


Then \Cref{intro thm} becomes
\begin{theorem}[More precise form of \Cref{intro thm}]
If the dimension of the hypersurfaces appearing in the Lefschetz pencil $\mathcal X$ is $2n-1$, then the natural map
\[
\frac{\Het^{2n+1}(X_{\bar\eta}\times S,\mu_2^{\otimes n+1})}{\Het^{2n+1}(X_{\bar\eta}\times S,\Z_2(n+1))}\to\frac{\Hrnr{n-1}^{2n+1}(X_{\bar\eta}\times S,\mu_2^{\otimes n+1})}{\Hrnr{n-1}^{2n+1}(X_{\bar\eta}\times S,\Z_2(n+1))}
\]
is non-zero. That is, there exists a 2-torsion class in $\Het^{2n+2}(X_{\bar\eta}\times S,\Z_2(n+1))$ which is not in the image of $\cl_{\Z_2}^{n+1}$.
\end{theorem}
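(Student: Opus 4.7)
By Schreieder's theorem above (applied with $i=n+1$, $\ell=2$, $r=1$), the statement reduces to exhibiting a class $\tilde\gamma\in\Hrnr{n-1}^{2n+1}(X_{\bar\eta}\times S,\mu_2^{\otimes n+1})$ whose image in the Schreieder quotient
\[
Q := \Hrnr{n-1}^{2n+1}(X_{\bar\eta}\times S,\mu_2^{\otimes n+1})/\Hrnr{n-1}^{2n+1}(X_{\bar\eta}\times S,\Z_2(n+1))
\]
is non-zero. My plan is to build such a class as a cup product of a primitive mod-$2$ class on $X_{\bar\eta}$ with the canonical mod-$2$ Enriques class on $S$, and then prove non-triviality by comparison with the codimension-two failure of Benoist-Ottem on $C\times S$ for an auxiliary curve $C$, using the specialization machinery of Colliot-Thélène.

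The Enriques surface $S$ supplies the non-trivial Kummer class $\delta_L\in\Het^1(S,\mu_2)\cong\Pic(S)[2]$ and the associated $2$-torsion class $c_1(L)\in\Het^2(S,\Z_2(1))$, whose mod-$2$ reduction $\overline{c_1(L)}\in\Het^2(S,\mu_2)$ serves as the ``transverse'' factor. I would take
\[
\tilde\gamma := \alpha\smile\overline{c_1(L)}\in\Het^{2n+1}(X_{\bar\eta}\times S,\mu_2^{\otimes n+1})
\]
for a suitable class $\alpha\in\Het^{2n-1}(X_{\bar\eta},\mu_2^{\otimes n})$ in the primitive middle cohomology (naturally arising from the monodromy of the pencil, e.g.\ as a vanishing-cycle class). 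Multiplicativity of Schreieder's refined unramified cohomology (\cite{Sch1}) places $\tilde\gamma$ in $\Hrnr{n-1}^{2n+1}$ at the appropriate level of the tower.

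To show $\tilde\gamma\neq 0$ in $Q$, I would apply Colliot-Thélène's specialization argument (\cite{CT}). Inside a smooth $k$-rational fibre $X_t$ of the pencil, pick a smooth plane section $C := X_t\cap\Lambda$ with $\Lambda\cong\P^2\subset\P^{2n}$ generic; since $d\geq 3$, the curve $C$ has genus $\geq 1$. By Benoist-Ottem and Colliot-Thélène (\cite{BO,CT}), the class $\delta_C\smile\overline{c_1(L)}$ is non-zero in the codimension-two Schreieder quotient $\Hrnr{0}^3(C\times S,\mu_2^{\otimes 2})/\Hrnr{0}^3(C\times S,\Z_2(2))$, where $\delta_C\in\Het^1(C,\mu_2)$ is the Kummer image of a non-trivial $2$-torsion line bundle on $C$. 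I would then construct a comparison morphism between $Q$ and this codimension-two quotient via an iterated specialization/Gysin procedure along the chain $C\subset X_t\subset\mathcal{X}$, shown to send $\tilde\gamma$ to (a non-zero multiple of) $\delta_C\smile\overline{c_1(L)}$; the non-triviality of the latter then forces $\tilde\gamma\neq 0$ in $Q$.

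\emph{Main obstacle.} The central technical difficulty is the comparison morphism. The naive restriction to $C\times S$ vanishes identically in degree $2n+1$ for $n\geq 2$ (since $\dim(C\times S)=3$), and Gysin push-forward in the opposite direction is killed by the Euler class of the normal bundle of $C\times S$ in $X_{\bar\eta}\times S$. The comparison must therefore be constructed by exploiting the codimension filtration of Schreieder's refined unramified cohomology through an iterated specialization, and shown to respect both the $\Z_2\to\mu_2$-reduction and the cycle-class denominator. Combining \cite{CT}'s specialization technology with \cite{Sch1}'s multiplicative refined framework to produce and control such a morphism is the technical heart of the argument, and is where the Lefschetz pencil hypothesis (ensuring smoothness of $\mathcal{X}$ and mild singular fibres) is used essentially.
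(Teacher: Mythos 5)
There is a genuine gap, and it is fatal: the class you choose on the Enriques surface is the wrong one. You take $\overline{c_1(L)}\in\Het^2(S,\mu_2)$, the mod-$2$ reduction of the first Chern class of the $2$-torsion line bundle $L=\omega_S$. But $c_1(L)$ already lives in $\Het^2(S,\Z_2(1))$, so $\overline{c_1(L)}$ lifts to $\mu_4$ (indeed to $\Z_2$) and is moreover algebraic, being the cycle class of a divisor. The entire mechanism of the paper's proof of \Cref{failure ITC} hinges on choosing a class on $S$ with the opposite properties. In \Cref{cohom-class-Enriques} the class $\alpha\in\Het^2(S,\mu_2)$ is taken to come from the Brauer part $\Het^2(S,\G_m)[2]\cong\Z/2\Z$ of the Kummer sequence, and the key point is that $\alpha$ does \emph{not} lift to $\mu_4$ because $\Het^2(S,\G_m)[4]\to\Het^2(S,\G_m)[2]$ is zero. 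After one specializes along the pencil and takes the residue along the exceptional quadric $D$ in the blown-up singular fibre (\Cref{Residue Refined}, \Cref{product-residue}), \Cref{Product Algebraic and Liftable} forces the $S$-factor to be algebraic, resp.\ to lift to $\mu_4$; the contradiction comes precisely from the fact that the Brauer class has neither property. With $\overline{c_1(L)}$ in its place there is no contradiction, and the argument collapses at the final step.

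Independently of the above, your comparison strategy is not the one the paper uses and is not filled in. You propose to transport the problem to the codimension-two Benoist--Ottem situation on $C\times S$ for a plane section $C\subset X_t$ by an ``iterated specialization/Gysin procedure along $C\subset X_t\subset\mathcal X$'', and you correctly observe that naive restriction and Gysin both fail. But the paper never passes through a curve factor at all: it specializes the pencil $\mathcal X\to\P^1$ to the nodal fibre $X_0$ (\Cref{comparison-Lefschetz}, \Cref{Existence Non-zero ResidueSp}), resolves the node, and applies the residue map to the exceptional smooth quadric $D$ of dimension $2n-2$. The crucial non-vanishing is that of $Res(sp_{0'}(\beta'))\in\Het^{2n-2}(D,\mu_2)$ (\Cref{non-zero residue}), which is obtained by a cohomology count on $\widetilde{X_0}$, $\widetilde{X_0}\setminus D$ and $D$; the algebraicity of the full cohomology of the quadric $D$ is then what makes \Cref{Product Algebraic and Liftable} applicable. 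So even if the $S$-class is corrected, the ``comparison morphism'' you flag as the technical heart is not a gap the paper fills in the way you sketch; it simply does not arise.
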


Having established this, we also obtain as direct consequences similar results involving $X_{\bar\eta}\times S^i$ (cf. \Cref{gen failure ITC}) and certain variety of lines (cf. \Cref{var of lines failure}).

\subsection*{Structure of the paper} In \Cref{preliminaries} we recall some general result concerning étale cohomology and cover the construction of the refined unramified cohomology groups and the specialization map from \cite{Sch2}. In \Cref{comparing sp and cosp} we introduce the cospecialization map from \cite[Chapter I\textsection 8]{FK} and show how this map compares with the aforementioned specialization map. Then in \Cref{main section} we apply everything to certain Lefschetz pencils to deduce our application on the integral Hodge/Tate type conjectures, similar to \cite{CT}. We end with some general observations in \Cref{closing remarks}.

\section*{Acknowledgements} I am very grateful to my supervisor Dr. Mingmin Shen for introducing me to this topic and for the many helpful conversations concerning the subject. This project is carried out during the author's PhD, which is supported by the NWO vidi grant 016.Vidi.189.015.

\section{Preliminaries}\label{preliminaries}
Here we recall the theory of étale cohomology and give proofs of standard properties for reference purposes. Moreover we recall the theory of \cite{Sch1} on refined unramified cohomology and some of its properties we will use.
\subsection{Étale Cohomology}
\subsubsection{Verdier Duality}\label{verdier duality}
Let $f\colon X\to S$ be a compactifiable morphism over $k$. The following is \cite[Chapitre XVIII, Théorème 3.1.4]{SGA4}.
\begin{theorem}\label{VD}
Let $\mathrm{char}(k)\nmid n$, then the functor $Rf_!\colon D(X,\Z/n\Z)\to D(S,\Z/n\Z)$ has a partial right adjoint $Rf^!\colon D^+(S,\Z/n\Z)\to D^+(X,\Z/n\Z)$. 

If we moreover assume $f$ to be smooth, then we can define $Rf^!$ as in \cite[\textsection 4.4]{Verdier}.
\end{theorem}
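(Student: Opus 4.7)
The plan is to reduce Verdier duality to two local cases via Nagata's compactification theorem: factor $f = \bar f \circ j$ with $j \colon X \hookrightarrow \bar X$ an open immersion and $\bar f \colon \bar X \to S$ proper, so that $Rf_! = R\bar f_* \circ j_!$. If each factor admits a partial right adjoint, one then sets $Rf^! := j^* \circ R\bar f^!$ and verifies independence of the chosen compactification, using uniqueness of adjoints together with the standard calculus of Nagata factorizations.

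For the open-immersion piece the adjoint is immediate: $j_!$ is already exact with exact right adjoint $j^*$ at the level of abelian sheaves, so no derived functor is needed and $j^! = j^*$. The content lies in the proper case, where $R\bar f_* = R\bar f_!$ and one must construct $R\bar f^!$. I would invoke an adjoint functor theorem of Brown--Neeman type. The hypothesis $\mathrm{char}(k)\nmid n$ guarantees finite étale cohomological dimension on compactifiable morphisms, so that $R\bar f_!$ commutes with arbitrary small direct sums. Combined with the fact that $D(\bar X,\Z/n\Z)$ is compactly generated, Brown representability then yields a right adjoint on the unbounded derived category, which one restricts to $D^+$.

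For the final assertion, when $f$ is smooth of relative dimension $d$, I would appeal to relative purity. The trace map produces a natural transformation $f^*(d)[2d] \to Rf^!$; smooth base change together with purity shows this is an isomorphism, so that $Rf^!(\Z/n\Z) \cong \mu_n^{\otimes d}[2d]$. This matches Verdier's explicit definition in \cite[\textsection 4.4]{Verdier}, and the adjunction can then be checked directly from the trace and projection formulae.

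The main obstacle is the proper case: one must verify compact generation of $D(\bar X,\Z/n\Z)$ and that $R\bar f_!$ preserves direct sums on unbounded complexes (or, equivalently, give an explicit construction via a dualizing complex and a trace). This is the technical heart of \cite[Chapitre XVIII]{SGA4}, and as the result is used as a black box for the rest of the paper, in a write-up I would simply cite \cite[Chapitre XVIII, Théorème 3.1.4]{SGA4} rather than reprove it.
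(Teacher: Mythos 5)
Your final conclusion matches the paper exactly: the theorem is introduced there with the sentence ``The following is \cite[Chapitre XVIII, Th\'eor\`eme 3.1.4]{SGA4}'' and no proof is supplied, so citing the reference is precisely what the paper does. Your preceding sketch (Nagata factorization, Brown representability for the proper factor, relative purity for the smooth case) is a reasonable modern outline of what underlies the cited result, but since both you and the paper ultimately treat it as a black box, the two approaches coincide.
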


Consider the following condition on $f$ from \cite[Definition 1.4]{FK}: \emph{All non-empty geometric fibres of $f$ are smooth of dimension $d$} or more generally condition $(\ast)_d$ from \cite[Chapitre XVIII, Théorème 2.9]{SGA4}: \emph{There exists an open $U\subseteq X$ such that $f|_U$ is flat with fibres of dimension $\leq d$ and the fibres over $X\setminus U$ have dimension $<d$.}

If $f$ satisfies $(\ast)_d$, then there is a trace map 
\[
\int_{X/S}\colon R^{2d}f_!\Lambda(d)\to\Lambda,
\]
where the $\Lambda$ are constant $n$-torsion sheaves on $X$ and $S$ respectively \cite[Theorem II.1.6]{FK} or \cite[Chapitre VXIII, Théorème 2.9]{SGA4}. Compatibility of the trace map with base change implies that the stalk at a point $s\in S$ of $\int_{X/S}$ equals the trace map $\int_{X_{\bar s}}\colon\mrH_c^{2d}(X_{\bar s},\Lambda(d))\to\Lambda$. 

There is a morphism of functors $t_f\colon f^\ast(d)[2d]\to Rf^!$ so that the diagram 
\[
\begin{tikzcd}
Rf_! f^\ast(d)[2d]\ar[d,"\int_{X/S}"]\ar[r,"t_f"]&Rf_! Rf^!\ar[ld,"Rf_!\vdash Rf^!"]\\
\id&~
\end{tikzcd}
\]
commutes, \cite[Chapitre XVIII, Lemme 3.2.3]{SGA4}. By \cite[Chapitre XVIII, Théorème 3.2.5]{SGA4} this $t_f$ is an isomorphism if $f$ is smooth, in particular, if we take $f\colon X\to S=\spec(\bar k)$ smooth, then the unit $Rf_!Rf^!\Lambda\to\Lambda$ is the trace morphism, which is (in degree 0) $\int_X\colon\mrH_c^{2d_X}(X,\Lambda)\isomto\Lambda$.

\begin{prop}[Poincaré Duality]
Let $X$ be smooth over $k=\bar k$ and let $\mathrm{char}(k)\nmid n$. For $\mathcal F$ a locally constant constructible sheaf of $\Z/n\Z$-modules, there is a natural isomorphism $\Het^{2d_X-i}(X,\mathcal F^\vee)\cong\mrH_c^i(X,\mathcal F)^\vee=\Hom_{\Z/n\Z}(\mrH_c^i(X,\mathcal F),\Z/n\Z)$.
\end{prop}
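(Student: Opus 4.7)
The plan is to deduce this formally from the Verdier duality adjunction of \Cref{VD}, applied to the structure morphism $f\colon X\to\spec(k)$. Setting $\Lambda=\Z/n\Z$, the adjunction $Rf_!\dashv Rf^!$ produces a natural isomorphism
\[
R\Hom_\Lambda(Rf_!\mathcal F,\Lambda)\isomto R\Gamma\bigl(X,R\Hom_\Lambda(\mathcal F,Rf^!\Lambda)\bigr).
\]
Since $f$ is smooth of relative dimension $d_X$, the excerpt guarantees that $t_f\colon f^\ast(d_X)[2d_X]\isomto Rf^!$ is an isomorphism, whence $Rf^!\Lambda\cong\Lambda(d_X)[2d_X]$.

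Next I would exploit self-injectivity of $\Z/n\Z$ (which holds by Baer's criterion, since every ideal of $\Z/n\Z$ is principal) to collapse all higher derived $\Hom$'s. Since $\mathcal F$ is locally constant and constructible, $R\Hom_\Lambda(\mathcal F,\Lambda)$ reduces to $\mathcal F^\vee$ concentrated in degree zero, and likewise the hyperext spectral sequence for $R\Hom_\Lambda(Rf_!\mathcal F,\Lambda)$ degenerates to $\mrH_c^i(X,\mathcal F)^\vee$ in degree $-i$. Plugging these into the displayed isomorphism and taking $\mathrm{H}^{-i}$ on both sides yields
\[
\mrH_c^i(X,\mathcal F)^\vee\cong\Het^{2d_X-i}(X,\mathcal F^\vee(d_X)).
\]
Finally, because $k=\bar k$ contains a primitive $n$-th root of unity, fixing one produces a (non-canonical) identification $\mu_n\cong\Z/n\Z$ on $X$, trivialising the Tate twist, so $\mathcal F^\vee(d_X)\cong\mathcal F^\vee$ and the claimed natural isomorphism follows.

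The only genuine inputs are \Cref{VD} and the smoothness identification of $Rf^!$; everything else is bookkeeping. The main point to watch is that the shift $[2d_X]$ on the right translates into the re-indexing $i\mapsto 2d_X-i$ on the left, and that one must fix a root of unity in order to strip the Tate twist from the final statement. A more concrete alternative would be to describe the duality as the cup-product pairing $\mathcal F\otimes\mathcal F^\vee(d_X)\to\Lambda(d_X)$ composed with the trace $\int_X\colon\mrH_c^{2d_X}(X,\Lambda(d_X))\isomto\Lambda$ recalled in the excerpt; this is the standard concrete incarnation of Poincaré duality, and its perfectness is a direct translation of the same adjunction.
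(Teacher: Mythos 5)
Your proposal is correct and follows exactly the route the paper indicates: it works out in detail the first of the references the paper simply cites (Verdier duality via \cite[Chapitre XVIII, (3.2.6.2)]{SGA4}), using the adjunction $Rf_!\dashv Rf^!$ together with $Rf^!\Lambda\cong\Lambda(d_X)[2d_X]$ and self-injectivity of $\Z/n\Z$ to collapse the derived Homs. Your remark about fixing a primitive $n$-th root of unity to trivialise the Tate twist is apt and correctly explains the absence of the twist in the paper's statement.
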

\begin{proof}
This follows either from the Verdier Duality \cite[Chapitre XVIII, (3.2.6.2)]{SGA4} or can be shown directly: \cite[Theorem II.1.13]{FK}, \cite[Chapter VI Theorem 11.1]{Milne}.
\end{proof}

\begin{definition}\label{Gysin map}
Let $f\colon X\to Y$ be a proper map between smooth varieties and set $c=d_X-d_Y$. We define the Gysin map $f_\ast\colon\Het^i(X,\Lambda)\to\Het^{i-2c}(Y,\Lambda)$ to be the composition
\[
\Het^i(X,\Lambda)\cong\mrH_c^{2d_X-i}(X,\Lambda)^\vee\overset{(f^\ast)^\vee}{\to}\mrH_c^{2d_X-i}(Y,\Lambda)^\vee\cong\mrH^{i-2c}(Y,\Lambda).
\]
\end{definition}

\subsubsection{Cup Product}
Let $\mathcal F$ be a sheaf on a scheme $S$ and $U\to S$ étale. If $\alpha\in\Gamma(U,\mathcal F)$ is a section of a sheaf and $\bar s\to U$ a geometric point, then we will write $\alpha_{\bar s}\in\mathcal F_{\bar s}$ for its image in the stalk. In particular, if $f\colon X\to S$ is compactifiable and $\alpha\in\Gamma(U,R^if_!\Lambda)$, then the base-change theorem gives $\alpha_{\bar s}\in \mrH^i_c(X_{\bar s},\Lambda)$.

Given an étale neighborhood $U\to S$ of $s\in S$, then we would like to have a cup product $\Gamma(U,R^if_!\Lambda)\times\Het^j(X_U,\Lambda)\overset{-\cup-}{\to}\Gamma(U,R^{i+j}f_!\Lambda)$ fitting in a commutative diagram
\begin{equation}\label{cup product}
\begin{tikzcd}
\Gamma(U,R^if_!\Lambda)\times\Het^j(X_U,\Lambda)\ar[d]\ar[r,"-\cup-"]&\Gamma(U,R^{i+j}f_!\Lambda)\ar[d]\\
\mrH_c^i(X_{\bar s},\Lambda)\times\Het^j(X_{\bar s},\Lambda)\ar[r,"-\cup-"]&\mrH_c^{i+j}(X_{\bar s},\Lambda)
\end{tikzcd}
\end{equation}
where the vertical maps are the natural stalk and restriction maps and the bottom map is the ordinary cup product, defined as follows (\cite[page 303]{FK}): let $\varphi\in\Hom_{D(X,\Lambda)}(\Lambda,\Lambda[j])=\Het^j(X,\Lambda)$, which induces $R(p_X)_!(\varphi)\colon R(p_X)_!(\Lambda)\to R(p_X)_!(\Lambda[j])$, where we write $p_X\colon X\to\spec(k)$ for the structure map, and taking $i$-th cohomology gives $\mrH_c^i(X,\Lambda)\to\mrH_c^{i+j}(X,\Lambda)$. 

Write $f^U:=f|_{X_U}\colon X_U\to U$ for the restriction to the étale neighborhood $U\to S$. Then any $\varphi\in\Het^j(X_U,\Lambda)=\Hom_{D(X_U,\Lambda)}(\Lambda,\Lambda[j])$ induces a map $Rf^U_!\varphi\colon Rf^U_!\Lambda\to Rf^U_!\Lambda[j]$. Taking $i$-th cohomology and sections gives us a morphism
\[
\Gamma(U,R^if_!\Lambda)\times\Het^j(X_U,\Lambda)\overset{\cup}{\to}\Gamma(U,R^{i+j}f_!\Lambda).
\]
To show that the desired diagram (\ref{cup product}) commutes, write $i\colon X_{\bar s}\to X_U$ for the geometric fibre, then the composition
\[
\Hom_{D(X_U,\Lambda)}(\Lambda,\Lambda[j])\to\Hom_{D(X_U,\Lambda)}(\Lambda,Ri_\ast i^\ast\Lambda[j])\cong\Hom_{D(X_{\bar s},\Lambda)}(\Lambda,\Lambda[j])
\]
is given by $i^\ast$ and induces the restriction map $\Het^j(X_U,\Lambda)\to\Het^j(X_{\bar s},\Lambda)$ on cohomology. This shows that $Rf^U_!\varphi(\tilde\alpha)_{\bar s}=\tilde\alpha_{\bar s}\cup i^\ast\varphi$, for any $\tilde\alpha\in\Gamma(U,R^if_!\Lambda)$ and $\varphi\in\Het^j(X_U,\Lambda)$, as wished.

\subsubsection{Norm Map}
We recall here the construction of the Norm map from \cite[0BD2]{Stacks}. Let $f\colon X\to Y$ be a finite flat map, then $f_\ast\Oc_X$ is a finite locally free $\Oc_Y$-module \cite[02KB]{Stacks}, so there exists a Zariski open cover $\mathcal U$ of $Y$ so that $(f_\ast\Oc_X)|_U\cong\Oc_U^{\oplus d}$ for every $U\in\mathcal U$. Then we can define the norm map $(f_\ast\Oc_X^\ast)|_U\to\Oc_U^\ast$ by the determinant of the corresponding multiplication matrix. This also respects localization, so it glues to a map $f_\ast\Oc_X^\ast\to\Oc_Y^\ast$. Note that this also extends over étale sheaves, thus we obtain the following.
\begin{definition}
Denote the norm map on étale sheaves $N\colon f_\ast\G_{m,X}\to\G_{m,Y}$.
\end{definition}

Let $f\colon B'\to B$ be a finite map between smooth curves. Note that such a map is automatically flat. For $\mathcal X\to B$ a smooth family, write $f\colon\mathcal X':=\mathcal X\times_BB'\to\mathcal X$ for the base change as well, which is thus also finite flat. For each Zariski open $U\subseteq B$ we have $U':=f^{-1}(U)\to U$ is still finite flat. By \cite[page 136]{FK} we have a commutative diagram involving the norm and trace map
\[
\begin{tikzcd}
0\ar[r]&f_\ast\mu_{\ell^r}\ar[r]\ar[d,"\int_{\mathcal X'_{U'}/\mathcal X_U}"]&f_\ast\Oc_{\mathcal X'_{U'}}^\ast\ar[r,"\ell^r"]\ar[d,"N"]&f_\ast\Oc_{\mathcal X'_{U'}}^\ast\ar[r]\ar[d,"N"]&0\\
0\ar[r]&\mu_{\ell^r}\ar[r]&\Oc_{\mathcal X_U}^\ast\ar[r,"\ell^r"]&\Oc_{\mathcal X_U}^\ast\ar[r]&0
\end{tikzcd}.
\]
The corresponding long exact sequence then gives a commutative diagram
\[
\begin{tikzcd}
\Het^0(\mathcal X_U,f_\ast\Oc_{\mathcal X'_{U'}}^\ast)\ar[r]\ar[d,"N"]&\Het^1(\mathcal X_U,f_\ast\mu_{\ell^r})\ar[d,"\int_{\mathcal X'_{U'}/\mathcal X_U}"]\\
\Het^0(\mathcal X_U,\Oc_{\mathcal X_U}^\ast)\ar[r]&\Het^1(\mathcal X_U,\mu_{\ell^r})
\end{tikzcd},
\]
taking the direct limit over all Zariski opens $U\subseteq B$, we obtain a commuting diagram
\[
\begin{tikzcd}
\Het^0(X_{\eta'},\Oc_{X_{\eta'}}^\ast)\ar[r]\ar[d,"N"]&\Het^1(X_{\eta'},\mu_{\ell^r})\ar[d,"\varinjlim_{U\subseteq B} f_\ast"]\\
\Het^0(X_\eta,\Oc^\ast_{X_\eta})\ar[r]&\Het^1(X_\eta,\mu_{\ell^r})
\end{tikzcd}.
\]
We obtain the following compatibility.
\begin{lemma}\label{Compatibility Norm and Gysin}
The following diagram commutes
\[
\begin{tikzcd}
\Het^0(\eta',\Oc^\ast_{\eta'})\ar[r, two heads]\ar[d,"N"]&\Het^1(\eta',\mu_{\ell^r})\ar[r]\ar[d,"\varinjlim f_\ast"]&\Het^1(X_{\eta'},\mu_{\ell^r})\ar[d,"\varinjlim f_\ast"]\\
\Het^0(\eta,\Oc^\ast_{\eta})\ar[r, two heads]&\Het^1(\eta,\mu_{\ell^r})\ar[r]&\Het^1(X_{\eta},\mu_{\ell^r})
\end{tikzcd}.
\]
\end{lemma}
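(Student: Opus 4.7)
The plan is to split the diagram into a left square (involving only the base points $\eta,\eta'$) and a right square (recording compatibility of the Gysin map with pullback along the flat structure maps $X_\eta\to\eta$ and $X_{\eta'}\to\eta'$), and to handle them separately.

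For the left square, I would apply the construction preceding the lemma to the finite flat morphism $f\colon B'\to B$ itself, specializing $\mathcal X=B$ and $\mathcal X'=B'$. The same norm--trace compatibility of the Kummer sequence then yields, for every Zariski open $U\subseteq B$ with preimage $U'=f^{-1}(U)$, a commutative square of connecting maps with $N\colon f_\ast\Oc^\ast_{U'}\to\Oc^\ast_U$ on degree zero and $\int_{U'/U}\colon f_\ast\mu_{\ell^r}\to\mu_{\ell^r}$ on degree one; taking the direct limit over $U\subseteq B$ gives the left square.

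For the right square, one needs the pullback $\Het^1(\eta,\mu_{\ell^r})\to\Het^1(X_\eta,\mu_{\ell^r})$ and its primed analogue to intertwine the two Gysin maps. I would deduce this from base-change compatibility of the trace along the Cartesian square with vertical structure maps $X_\eta\to\eta$, $X_{\eta'}\to\eta'$ and horizontal copies of $f$. Concretely, for each open $U\subseteq B$, smooth base change along $p_U\colon\mathcal X_U\to U$ provides a natural isomorphism $p_U^\ast f_\ast\mu_{\ell^r}\cong f_\ast p_{U'}^\ast\mu_{\ell^r}$, under which the trace on $U$ corresponds to the trace on $\mathcal X_U$; passing to global sections, $\Het^1$, and then the direct limit over $U$ yields the right square. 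The step I expect to be the main obstacle is precisely this base-change compatibility of the trace, which at the level of sheaves reduces to the projection formula for finite flat morphisms; once it is in place, the rest passes formally through cohomology and the direct limit.
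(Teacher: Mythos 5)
Your proposal is correct in its conclusion and in the treatment of the left square, but it handles the right square by a genuinely different route than the paper. You argue the right square commutes \emph{directly}, by appealing to base-change compatibility of the trace map for the finite flat morphism $f$ along the flat structure maps $\mathcal X_U\to U$. That compatibility is indeed true (it follows from the local description of $f_\ast\Oc$ as a locally free module, since the determinant/trace construction is local and commutes with base change), so the argument goes through; note, however, that this is not really a consequence of the projection formula as you suggest --- it is a separate, if standard, compatibility. Once you have it, passing to cohomology and direct limits is formal, as you say.

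The paper takes a more elementary detour that avoids invoking any base-change compatibility of traces. It observes that the first horizontal maps are surjective by Hilbert 90, so the commutativity of the right square is equivalent to that of the ``outer'' square. This outer square is then refactored through the intermediate terms $\Het^0(X_{\eta'},\Oc^\ast_{X_{\eta'}})$ and $\Het^0(X_\eta,\Oc^\ast_{X_\eta})$, using the fact that the Kummer connecting map commutes with pullback along $X_\eta\to\eta$. The refactored diagram splits into two squares: the right one is exactly the norm/trace compatibility established before the lemma, and the left one (compatibility of the norm with pullback of global units) is a direct computation. Your route is shorter to state and more conceptual; the paper's stays entirely within the Kummer/norm setup already assembled and avoids any extra input, at the price of the Hilbert-90 reduction step. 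Both yield the same result.
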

\begin{proof}
The left square commutes by the beginning. As the first horizontal maps are surjective by Hilbert-90, to show that the rigth square commutes, it suffices to show that the `outer' square commutes. Note that this square decomposes as 
\[
\begin{tikzcd}
\Het^0(\eta',\Oc^\ast_{\eta'})\ar[r]\ar[d,"N"]&\Het^0(X_{\eta'},\mathcal O_{X_{\eta'}}^\ast)\ar[r]\ar[d,"N"]&\Het^1(X_{\eta'},\mu_{\ell^r})\ar[d,"\lim f_\ast"]\\
\Het^0(\eta,\Oc^\ast_{\eta})\ar[r]&\Het^0(X_\eta,\mathcal O_{X_\eta}^\ast)\ar[r]&\Het^1(X_{\eta},\mu_{\ell^r})
\end{tikzcd}.
\]
Now the right square commutes by the beginning and the left square commutes by a direct computation.
\end{proof}

\subsubsection{Residue Map}
Suppose $X$ is smooth and $i\colon Z\to X$ is a smooth closed subvariety of codimension $c$ with open complement $j\colon U\hookrightarrow X$. Then there exists a long exact sequence
\[
\cdots\to\Het^{i-2c}(Z,\Lambda)\overset{i_\ast}{\to} \Het^i(X,\Lambda)\overset{j^\ast}{\to} \Het^i(U,\Lambda)\to \Het^{i+1-2c}(Z,\Lambda)\overset{i_\ast}{\to}\cdots,
\]
See for example \cite[Remark VI.5.4]{Milne}. From now on, we shall refer to the connecting morphism as the \emph{residue map}, $Res\colon\Het^i(U,\Lambda)\to \Het^{i+1-2c}(Z,\Lambda)$. 

\begin{remark}\label{residue compatible restriction}
It is known that the residue map is compatible with the natural restriction to opens \cite[Lemma 2.3]{Sch2}.
\end{remark}

The following is \cite[Lemma 2.4]{SchNotes}, but we state it here again for reference purposes, together with a direct corollary.
\begin{prop}\label{cup-global-class}
Let $(X,Z)$ be a smooth pair and let $\alpha\in\Het^i(X\setminus Z,\Lambda)$ and $\beta\in\Het^i(X,\Lambda)$. Then $Res(\alpha\cup\beta|_{X\setminus Z})=Res(\alpha)\cup\beta|_Z$.
\end{prop}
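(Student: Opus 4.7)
The plan is to realize the residue map as the connecting morphism of a distinguished triangle in $D(X, \Lambda)$ and exploit the fact that a \emph{global} class $\beta$ induces a morphism of such triangles (whereas a class defined only on $U$ would not). Writing $j\colon U = X\setminus Z \hookrightarrow X$, the localization triangle
\begin{equation*}
i_\ast i^!\Lambda \lto \Lambda \lto Rj_\ast \Lambda \lto i_\ast i^!\Lambda[1]
\end{equation*}
in $D(X, \Lambda)$, combined with the purity isomorphism $i^!\Lambda \simeq \Lambda(-c)[-2c]$, yields the long exact sequence preceding the statement upon applying $R\Gamma(X, -)$; under this identification $Res$ is the connecting map.

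The class $\beta \in \Het^i(X, \Lambda)$ is represented by a morphism $\tilde\beta\colon \Lambda \to \Lambda[i]$ in $D(X, \Lambda)$. Applying $\tilde\beta$, together with the natural transformations $\id \to Rj_\ast j^\ast$ and $i_\ast i^! \to \id$, produces the diagram
\begin{equation*}
\begin{tikzcd}
i_\ast i^!\Lambda \ar[d, "i_\ast i^!\tilde\beta"] \ar[r] & \Lambda \ar[d, "\tilde\beta"] \ar[r] & Rj_\ast \Lambda \ar[d, "Rj_\ast j^\ast\tilde\beta"] \\
i_\ast i^!\Lambda[i] \ar[r] & \Lambda[i] \ar[r] & Rj_\ast \Lambda[i]
\end{tikzcd}
\end{equation*}
which commutes by naturality of the unit and counit, and extends to a morphism of the full distinguished triangle with its shift by $[i]$. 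Using the adjunction $j^\ast \dashv Rj_\ast$ one sees that the rightmost vertical induces cup product with $\beta|_U$ on cohomology; analogously, the adjunction $i_\ast \dashv i^!$ together with functoriality of purity shows that the leftmost vertical induces cup product with $\beta|_Z$.

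Passing to cohomology via $R\Gamma(X, -)$ and invoking naturality of long exact sequences under morphisms of distinguished triangles yields a commutative square whose horizontal maps are $Res$ and whose verticals are $-\cup \beta|_U$ and $-\cup \beta|_Z$; evaluating at $\alpha$ produces the desired identity. The main obstacle is the final compatibility, namely that $i_\ast i^!\tilde\beta$ corresponds to $-\cup \beta|_Z$ after purity. This is an instance of the naturality of the purity isomorphism with respect to the $\Het^\ast(X, \Lambda)$-module structure on both sides; it can be reduced by excision to the model pair $(\A^c_Z, Z)$, where the cup product action is explicit, or alternatively extracted from the machinery in \cite[Chapitre XVIII]{SGA4}.
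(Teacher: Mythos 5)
The paper does not give its own proof of this proposition; it simply records it as \cite[Lemma 2.4]{SchNotes}, so there is no in-paper argument to compare against. Your derived-category proof is the standard and correct approach to this kind of statement: the localization triangle $i_\ast Ri^!\Lambda\to\Lambda\to Rj_\ast j^\ast\Lambda\to[1]$ is functorial in the object of $D(X,\Lambda)$, so applying it to the morphism $\tilde\beta\colon\Lambda\to\Lambda[i]$ produces a morphism of distinguished triangles whose middle, right and left components induce, after $R\Gamma(X,-)$, cup product with $\beta$, $\beta|_U$ and $\beta|_Z$ respectively; compatibility with the boundary map in the long exact sequence is then automatic.

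You correctly isolate the one non-formal ingredient: the compatibility of the absolute purity isomorphism $Ri^!\Lambda\simeq\Lambda(-c)[-2c]$ with the $\Het^\ast(X,\Lambda)$-module structure, i.e.\ that under purity $Ri^!\tilde\beta$ becomes cup product with $i^\ast\beta$. This is exactly the step that makes the Gysin sequence a sequence of $\Het^\ast(X,\Lambda)$-modules, and your two suggested routes (reduction to the normal-bundle model by excision and deformation to the normal cone, or extraction from the formalism of \cite[Chapitre XVIII]{SGA4}) are both viable; one can also phrase this as the projection formula for $i_\ast$. Two cosmetic remarks: write $Ri^!$ rather than $i^!$ (for a closed immersion $i_\ast$ is exact but its right adjoint is not), and $Rj_\ast j^\ast\Lambda$ rather than $Rj_\ast\Lambda$ in the triangle, though since $\Lambda$ is constant the abuse is harmless. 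Finally, be aware that the statement as phrased has $\beta$ on the right; when $\beta$ is moved to the left a sign $(-1)^{\deg\beta}$ appears, which is exactly the sign used later in \Cref{Specialization Along Liftable}.
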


\begin{cor}\label{product-residue}
Let $(X,Z)$ be a smooth pair with $Z$ closed of codimension $c$ and let $Y$ be any smooth variety. Let $\alpha\in\Het^i(X\setminus Z,\Lambda)$ and $\beta\in\Het^j(Y,\Lambda)$, then $Res(\alpha\times\beta)=Res(\alpha)\times\beta\in\Het^{i+j+1-2c}(Z\times Y,\Lambda)$.
\end{cor}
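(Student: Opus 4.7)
The plan is to reduce the statement to Proposition \ref{cup-global-class} applied to the smooth pair $(X\times Y, Z\times Y)$, after writing the external product as a cup product and using functoriality of the residue map under smooth base change.

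First I would set $U = X\setminus Z$ and let $p_U\colon U\times Y\to U$, $p_X\colon X\times Y\to X$, $p_Y\colon X\times Y\to Y$ and $p_Z\colon Z\times Y\to Z$ denote the various projections. By definition the external product is
\[
\alpha\times\beta = p_U^\ast\alpha \cup (p_Y^\ast\beta)|_{U\times Y} \in \Het^{i+j}(U\times Y,\Lambda).
\]
Since $(X\times Y, Z\times Y)$ is a smooth pair with $Z\times Y$ of codimension $c$, Proposition \ref{cup-global-class} (with $\alpha$ replaced by $p_U^\ast\alpha$ and $\beta$ replaced by the global class $p_Y^\ast\beta \in \Het^j(X\times Y,\Lambda)$) yields
\[
Res(\alpha\times\beta) = Res(p_U^\ast\alpha) \cup (p_Y^\ast\beta)|_{Z\times Y}.
\]

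The second step is to identify $Res(p_U^\ast\alpha)$ with $p_Z^\ast Res(\alpha)$. This is the compatibility of the residue map with smooth (here, even flat) base change along $Y\to\spec(k)$. Concretely, the Gysin/localization triangle $i_\ast\Lambda(-c)[-2c]\to \Lambda \to Rj_\ast\Lambda \to [1]$ for the pair $(X,Z)$ pulls back via the smooth morphism $p_X$ to the corresponding triangle for $(X\times Y,Z\times Y)$ by smooth (proper) base change applied to $i$ and $j$. Taking cohomology and naturality of the connecting homomorphism gives the commutativity of
\[
\begin{tikzcd}
\Het^i(U,\Lambda)\ar[r,"Res"]\ar[d,"p_U^\ast"]&\Het^{i+1-2c}(Z,\Lambda)\ar[d,"p_Z^\ast"]\\
\Het^i(U\times Y,\Lambda)\ar[r,"Res"]&\Het^{i+1-2c}(Z\times Y,\Lambda).
\end{tikzcd}
\]
Combining the two steps,
\[
Res(\alpha\times\beta) = p_Z^\ast Res(\alpha) \cup (p_Y^\ast\beta)|_{Z\times Y} = Res(\alpha)\times\beta,
\]
where the last equality is just the definition of the external product on $Z\times Y$.

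The only nontrivial ingredient is Step 2, the compatibility of the residue with pullback along the smooth projection $X\times Y \to X$; everything else is bookkeeping. This compatibility is completely standard and in fact already appears in the literature in essentially this form (cf.\ \cite[Lemma 2.3]{Sch2} for the analogous compatibility with restriction to open subsets); the same argument via smooth base change for the localization triangle handles the projection case, so no serious obstacle is expected.
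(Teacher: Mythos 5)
Your proposal is correct and matches the paper's (unstated) intent: the corollary is meant to follow directly from Proposition~\ref{cup-global-class} applied to the pair $(X\times Y, Z\times Y)$, exactly as you do. You also correctly identify the one non-trivial ingredient that the paper leaves implicit, namely the compatibility $Res(p_U^\ast\alpha) = p_Z^\ast Res(\alpha)$ of the residue with pullback along the projection $X\times Y\to X$, and your justification via proper/smooth base change applied to the localization triangle is sound.
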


\subsection{Refined Unramified Cohomology}\label{ref unram cohom}
Here we set some notation and gather results from \cite{Sch1} which we will use throughout. We let $X$ be a smooth variety over a field $k$ so that we are in the situation of \cite[Lemma 6.5]{Sch1}. We let $\Lambda$ be $\mu_{\ell^r}$ or $\Z_\ell$ with $\ell$ prime to $\mathrm{char}(k)$.

\begin{definition}[\cite{Sch1}]
Let $j$ be an integer, we set
\begin{itemize}
\item $F_jX:=\{x\in X\mid \codim_X(\overline{\{x\}})\leq j\}$;
\item $\mrH^i(F_jX,\Lambda):=\varinjlim_{F_jX\subseteq U\subseteq X}\Het^i(U,\Lambda)$, where the direct limit is over all non-empty Zariski open $U$ so that $X\setminus U$ is of codimension $>j$.
\end{itemize}
\end{definition}
Note that for $j'\geq j$ we have natural maps $\mrH^i(F_{j'}X,\Lambda)\to\mrH^i(F_jX,\Lambda)$. We will usually abuse notation and call this map $F_j$, assuming that the domain is understood from the context. Now the refined unramified cohomology is defined as 
\[
\Hrnr{j}^i(X,\Lambda):=\im(\mrH^i(F_{j+1}X,\Lambda)\to\mrH^i(F_jX,\Lambda)).
\]

\begin{lemma}[{\cite[Lemma 5.8]{Sch1}}]\label{les Sch}
There is a long exact sequence 
\[
\cdots\to\bigoplus_{x\in X^{(j)}}\Het^{i-2j}(\kappa(x))\to\mrH^i(F_jX)\to\mrH^i(F_{j-1}X)\to\bigoplus_{x\in X^{(j)}}\Het^{i-2j+1}(\kappa(x))\to\Het^{i+1}(F_jX)\to\cdots,
\]
where all cohomology groups have coefficients $\Lambda$.
\end{lemma}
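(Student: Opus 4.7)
The plan is to derive the long exact sequence as a filtered direct limit of Gysin long exact sequences. Consider the filtered system $\mathcal C$ of \emph{admissible pairs} $(V,Z)$, where $V\subseteq X$ is open with $V\supseteq F_j X$ (equivalently $\codim_X(X\setminus V)>j$) and $Z\subseteq V$ is a smooth closed subvariety of pure codimension $j$; writing $U:=V\setminus Z$, the residue sequence for the smooth pair recalled in the Residue Map subsection yields
\[
\cdots\to\Het^{i-2j}(Z,\Lambda)\xrightarrow{i_\ast}\Het^i(V,\Lambda)\to\Het^i(U,\Lambda)\xrightarrow{\mathrm{Res}}\Het^{i-2j+1}(Z,\Lambda)\to\cdots.
\]
By the residue--restriction compatibility of \Cref{residue compatible restriction}, this sequence is functorial for refinements in $\mathcal C$, and since filtered colimits of abelian groups are exact, the colimit over $\mathcal C$ preserves exactness.

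Next I would identify the resulting terms. By definition $\varinjlim_{\mathcal C}\Het^i(V,\Lambda)=\mrH^i(F_j X,\Lambda)$. For the colimit of $\Het^i(U,\Lambda)$, the key cofinality claim is that any $U'\supseteq F_{j-1}X$ arises as $V'\setminus Z'$ for some admissible pair: write $X\setminus U'=W\sqcup Z''$, where $W$ is the union of irreducible components of codimension $>j$ and $Z''$ the union of the codimension-$j$ components, and set $Z'$ to be the smooth locus of $Z''$ and $V':=X\setminus(W\cup Z''_{\mathrm{sing}})$. Generic smoothness ensures $Z'$ is open dense in $Z''$, and by construction $V'\supseteq F_j X$, $Z'\subseteq V'$ is smooth of pure codimension $j$, and $V'\setminus Z'=U'$. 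Hence $\varinjlim_{\mathcal C}\Het^i(U,\Lambda)=\mrH^i(F_{j-1}X,\Lambda)$, and the induced middle map matches the natural map $\mrH^i(F_j X)\to\mrH^i(F_{j-1}X)$ coming from the inclusion of direct systems.

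For the outer terms, a smooth pure-codimension-$j$ closed subvariety $Z\subseteq V$ is a finite disjoint union of smooth opens $W_x\subseteq\overline{\{x\}}$ indexed by finitely many $x\in X^{(j)}$. Allowing $(V,Z)$ to vary so that $Z$ contains any prescribed finite collection of codimension-$j$ components, each $W_x$ runs over a cofinal subsystem of smooth opens of $\overline{\{x\}}$, whose colimit is $\Het^{i-2j}(\kappa(x),\Lambda)$ by definition. Since the pieces can be enlarged independently, the colimit distributes as a direct sum
\[
\varinjlim_{(V,Z)\in\mathcal C}\Het^{i-2j}(Z,\Lambda)=\bigoplus_{x\in X^{(j)}}\Het^{i-2j}(\kappa(x),\Lambda),
\]
and analogously in degree $i-2j+1$. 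Substituting these identifications into the colimit of the Gysin sequences produces the claimed long exact sequence.

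The main technical hurdle will be confirming that $\mathcal C$ is genuinely filtered under the smoothness constraint on $Z$: two admissible pairs $(V_1,Z_1)$ and $(V_2,Z_2)$ cannot naively be combined by taking $V_1\cap V_2$ and $Z_1\cup Z_2$, since the union of two smooth subvarieties need not be smooth along their intersections. One must therefore pass repeatedly to smooth loci and excise singular strata (of codimension strictly greater than $j$), a routine but careful bookkeeping exercise handled via generic smoothness; once this is set up, the direct-sum decomposition over $X^{(j)}$ in the outer terms follows formally from the fact that the colimit commutes with disjoint unions of components.
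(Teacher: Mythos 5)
Your argument---take the filtered colimit of Gysin (localization) long exact sequences over pairs $(V,Z)$ with $F_jX\subseteq V\subseteq X$ open and $Z\subseteq V$ smooth closed of pure codimension $j$, and identify $\varinjlim\Het^i(V)=\mrH^i(F_jX)$, $\varinjlim\Het^i(V\setminus Z)=\mrH^i(F_{j-1}X)$, $\varinjlim\Het^{\ast}(Z)=\bigoplus_{x\in X^{(j)}}\Het^{\ast}(\kappa(x),\Lambda)$---is the same strategy as Schreieder's original proof of the cited [Sch1, Lemma~5.8], which this paper invokes without reproducing; the cofinality, filteredness and direct-sum identifications you sketch are correct. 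One point worth making explicit, since \Cref{residue compatible restriction} by itself only treats shrinking the ambient open: your transitions also allow $Z$ to \emph{grow}, so for functoriality of the Gysin sequences you need the additional (standard) observation that when $\alpha\in\Het^i(V\setminus Z)$ is restricted to $V'\setminus Z'$ with $Z'\supsetneq Z\cap V'$, the residue of $\alpha|_{V'\setminus Z'}$ vanishes along the new components of $Z'$ (because $\alpha$ extends over them from $V'\setminus(Z\cap V')$), and likewise the pushforwards $i_\ast$ from the old and enlarged $Z$'s agree on the common summand; once that is noted the bookkeeping you defer goes through exactly as you describe.
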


\begin{cor}[{\cite[Corollary 5.10]{Sch1}}]
The natural map $\Het^i(X,\Lambda)\to\mrH^i(F_jX,\Lambda)$ is an isomorphism if $j\geq\lceil\frac{i}{2}\rceil$.
\end{cor}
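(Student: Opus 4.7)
The plan is to iterate the long exact sequence of \Cref{les Sch} step by step, starting from the top of the filtration. First I would observe that $F_{\dim X}X = X$, since no proper closed subset of $X$ has codimension strictly greater than $\dim X$, so the directed system defining $\mrH^i(F_{\dim X}X,\Lambda)$ reduces to $U = X$ itself. Hence $\mrH^i(F_{\dim X}X,\Lambda) = \Het^i(X,\Lambda)$, and the natural map in question factors as the chain of restriction maps
\[
\Het^i(X,\Lambda) = \mrH^i(F_{\dim X}X,\Lambda) \to \mrH^i(F_{\dim X - 1}X,\Lambda) \to \cdots \to \mrH^i(F_jX,\Lambda).
\]
It therefore suffices to show that every map $\mrH^i(F_{j'}X,\Lambda) \to \mrH^i(F_{j'-1}X,\Lambda)$ in this chain is an isomorphism once $j' - 1 \geq \lceil i/2\rceil$, i.e.\ $j' \geq \lceil i/2 \rceil + 1$.

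To prove that isomorphism claim I would extract from \Cref{les Sch} the four-term exact segment
\[
\bigoplus_{x \in X^{(j')}} \Het^{i-2j'}(\kappa(x),\Lambda) \longrightarrow \mrH^i(F_{j'}X,\Lambda) \longrightarrow \mrH^i(F_{j'-1}X,\Lambda) \longrightarrow \bigoplus_{x \in X^{(j')}} \Het^{i-2j'+1}(\kappa(x),\Lambda)
\]
and argue that both flanking sums vanish. A short case split on the parity of $i$ shows that $j' \geq \lceil i/2 \rceil + 1$ forces $i - 2j' + 1 \leq -1$, so both cohomological degrees $i - 2j'$ and $i - 2j' + 1$ are strictly negative. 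Since étale (= Galois) cohomology of a field vanishes in negative degree, both flanking groups are zero, and the middle arrow is forced to be an isomorphism. Composing these isomorphisms down to $j' - 1 = j$ recovers the desired comparison.

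I do not anticipate a serious obstacle: the one thing to be slightly careful about is the elementary bookkeeping in the numerical inequality $i - 2j' + 1 < 0$ for $j' \geq \lceil i/2\rceil + 1$ in both parities of $i$, and a sanity check that the composition of the filtration-restriction maps is the same map as the tautological one coming from the direct-limit definition of $\mrH^i(F_jX,\Lambda)$. Both verifications are formal once \Cref{les Sch} is taken for granted.
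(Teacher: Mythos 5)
The paper states this corollary without proof, attributing it to Schreieder's Corollary 5.10, so there is nothing in the paper itself to compare against; but your argument is correct and is precisely the intended derivation from \Cref{les Sch}. The factorization through the chain $\mrH^i(F_{\dim X}X,\Lambda)\to\cdots\to\mrH^i(F_jX,\Lambda)$, the observation that $F_{\dim X}X=X$ collapses the colimit to a single term, and the vanishing of the two flanking direct sums $\bigoplus_{x\in X^{(j')}}\Het^{i-2j'}(\kappa(x),\Lambda)$ and $\bigoplus_{x\in X^{(j')}}\Het^{i-2j'+1}(\kappa(x),\Lambda)$ (since $2j'\geq 2\lceil i/2\rceil+2\geq i+2$ forces both degrees to be negative, where Galois cohomology vanishes) together give exactly what is needed.
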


We will use the following interpretation of the failure of the integral Hodge/Tate type conjecture in terms of refined unramified cohomology.

\begin{theorem}[{\cite[Theorem 7.7]{Sch1}}]
Write $Z^i(X):=\coker(\cl^i\colon \CH^i(X)_{\Z_\ell}\to\Het^{2i}(X,\Z_\ell(i))$, then 
\[
Z^i(X)[\ell^r]\cong\frac{\Hrnr{i-2}^{2i-1}(X,\mu_{\ell^r})}{\Hrnr{i-2}^{2i-1}(X,\Z_\ell)}.
\]
\end{theorem}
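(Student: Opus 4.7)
The plan is to decouple the statement into two independent pieces: an identification of the refined unramified cohomology group in degree $2i$ with $Z^i(X)$, and a Bockstein long exact sequence relating the two coefficient systems $\mu_{\ell^r}$ and $\Z_\ell$. Combining both will yield the desired isomorphism.

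For the first piece, I would apply \Cref{les Sch} at two codimension levels. With $j=i$, the vanishing $\Het^{-1}(\kappa(x),\Lambda)=0$ together with the corollary $\mrH^{2i}(F_iX,\Z_\ell)=\Het^{2i}(X,\Z_\ell)$ yields an exact segment
\[
\bigoplus_{x\in X^{(i)}}\Z_\ell \longrightarrow \Het^{2i}(X,\Z_\ell) \longrightarrow \mrH^{2i}(F_{i-1}X,\Z_\ell),
\]
and the Bloch--Ogus/Deligne--Gabber compatibility identifies the first arrow with the cycle class map on the free $\Z_\ell$-module of codimension-$i$ cycles; since rationally equivalent cycles have the same \'etale class, its image coincides with $\im(\cl^i_{\Z_\ell})$. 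This produces an embedding $Z^i(X)\hookrightarrow\mrH^{2i}(F_{i-1}X,\Z_\ell)$. Applying \Cref{les Sch} now with $j=i-1$, the image of the natural map $\mrH^{2i}(F_{i-1}X,\Z_\ell)\to\mrH^{2i}(F_{i-2}X,\Z_\ell)$ is $\Hrnr{i-2}^{2i}(X,\Z_\ell)$ by definition, with kernel coming from residues out of $\bigoplus_{x\in X^{(i-1)}}\Het^{2}(\kappa(x),\Z_\ell)$. A diagram chase then shows that the composition $Z^i(X)\to\Hrnr{i-2}^{2i}(X,\Z_\ell)$ is an isomorphism.

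For the Bockstein piece, the short exact sequence $0\to\Z_\ell\xrightarrow{\ell^r}\Z_\ell\to\mu_{\ell^r}\to 0$ induces long exact sequences on each $\mrH^\ast(F_jX,-)$, compatible with the restriction maps arising from $F_{j+1}X\subseteq F_jX$. Taking images (and checking by a short diagram chase that the image construction commutes appropriately with the Bockstein) should produce a long exact sequence
\[
\Hrnr{i-2}^{2i-1}(X,\Z_\ell)\xrightarrow{\ell^r}\Hrnr{i-2}^{2i-1}(X,\Z_\ell)\to\Hrnr{i-2}^{2i-1}(X,\mu_{\ell^r})\to\Hrnr{i-2}^{2i}(X,\Z_\ell)\xrightarrow{\ell^r}\Hrnr{i-2}^{2i}(X,\Z_\ell).
\]
Extracting the $\ell^r$-torsion on the right and using the identification from the previous step gives the isomorphism claimed.

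The main obstacle will lie in step one: making precise both the identification of the connecting morphism in \Cref{les Sch} with the classical cycle class map, and the fact that $Z^i(X)\to\Hrnr{i-2}^{2i}(X,\Z_\ell)$ is a bijection rather than merely a surjection. Controlling the kernel requires a compatibility between the Gysin residues in codimensions $i$ and $i-1$, which is essentially the content of the Bloch--Ogus/Gersten machinery underpinning \Cref{les Sch}. A secondary subtlety is that $\Hrnr{}$ is defined as an image, so the Bockstein long exact sequence for refined unramified cohomology does not hold purely formally and must be extracted via naturality of \Cref{les Sch} in the coefficient ring together with a careful comparison of images.
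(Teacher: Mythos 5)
The paper does not prove this theorem; it is imported verbatim from Schreieder's work and cited as [Sch1, Theorem~7.7], so there is no internal argument to compare against. Evaluated on its own terms, your decoupling strategy has a genuine gap in each half. For the first, what \Cref{les Sch} at $j=i$ actually gives is $Z^i(X)\cong\Hrnr{i-1}^{2i}(X,\Z_\ell)$, i.e.\ the image of $\Het^{2i}(X,\Z_\ell)$ inside $\mrH^{2i}(F_{i-1}X,\Z_\ell)$ --- not $\Hrnr{i-2}^{2i}(X,\Z_\ell)$. Descending one more filtration step requires two further facts you do not establish: that the restriction $\Hrnr{i-1}^{2i}(X,\Z_\ell)\to\mrH^{2i}(F_{i-2}X,\Z_\ell)$ is injective, which means the image of $\bigoplus_{x\in X^{(i-1)}}\Het^{2}(\kappa(x),\Z_\ell(1))$ meets $\Hrnr{i-1}^{2i}$ only trivially, and that it is surjective onto $\Hrnr{i-2}^{2i}$, which forces $\mrH^{2i}(F_iX,\Z_\ell)\to\mrH^{2i}(F_{i-1}X,\Z_\ell)$ to be onto and hence a vanishing in $\bigoplus_{x\in X^{(i)}}\Het^1(\kappa(x),\Z_\ell)$. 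Neither is a consequence of the Gersten machinery that underlies \Cref{les Sch}; for continuous $\Z_\ell$-coefficients the relevant boundary groups carry Tate modules of Brauer groups that are not zero in general, so the ``diagram chase'' you invoke would have to prove precisely these two nontrivial claims, and no argument is given.

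The second half has a sharper problem: the Bockstein long exact sequence for $\Hrnr{i-2}$ is not merely ``not purely formal,'' it is false as a general principle. $\Hrnr{j}$ is defined as the image of a map, and passing to images in a morphism between two long exact sequences does not yield an exact sequence (the failure is measured by kernels via the snake lemma, which the image construction discards). So the five-term sequence you write down is not available even after granting step one, and the two pieces of your decoupling do not compose. This is why the statement in [Sch1] is an isomorphism of torsion quotients set up and verified directly, with the Bockstein boundary $\delta$, the filtration restriction $F_{i-2}$, and the cycle class interacting in a single chase: the conjunction is true although your two intermediate claims are individually unjustified. The proposal needs to be restructured around one combined diagram rather than a composition of two would-be isomorphisms.
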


\begin{remark}\label{Tate classes}
Note that the non-vanishing of $Z^i(X)[\ell^r]$ indeed gives the failure of the integral Tate \Cref{ITC}. After all, suppose $0\neq\alpha\in Z^i(X)[\ell^r]$, then $\alpha$ is non-algebraic but $\ell^r\alpha=\cl^i(\Gamma)$ is. This implies that $\ell^r\alpha\in\Het^{2i}(X,\Z_\ell(i))^{G_{k_0}}$ for some $k_0\subseteq k$ over which $\Gamma$ is defined. We would now like to show that there exists a finite field extension $k_0\subseteq k_0'$ so that $\alpha\in\Het^{2i}(X,\Z_\ell(i))^{G_{k'}}$. It holds that $g\alpha-\alpha\in\Het^{2i}(X,\Z_\ell(i))[\ell^r]$ for every $g\in G_{k_0}$. So $G_{k_0}$ acts on the finite set $\alpha+\Het^{2i}(X,\Z_\ell(i))[\ell^r]$. As a profinite set is Hausdorff, $G_{k_0}$ now acts on a discrete space, so it has open stabalizers. This means there exists a finite field extension $k_0\subseteq k'$ so that $G_{k'}$ stablizes $\alpha$, that is $\alpha\in\varinjlim_{k_0\subseteq k'}\Het^{2i}(X,\Z_\ell(i))^{G_{k'}}$ and is non-algebraic.
\end{remark}

We will also use the following properties. They essentially follow from the known properties on ordinary cohomology (ie. for $j$ large), \cite[09YQ and 01YZ]{Stacks}.
\begin{lemma}\label{refined-geometric-point}
Let $X$ be defined over a field $\eta$ (ie. the generic fibre of a family) and $\Lambda=\mu_{\ell^r}$ be finite. Then there is a natural isomorphism $\varinjlim_{\eta\subseteq\eta'}\mrH^i(F_jX_{\eta'},\Lambda)\to\mrH^i(F_jX_{\bar\eta},\Lambda)$ compatible with the $F_j$ filtration, where $\bar\eta$ is the algebraic (resp. seperable) closure of $\eta$ and the direct limit runs over all finite (resp. finite separable) extensions $\eta'$ of $\eta$.
\end{lemma}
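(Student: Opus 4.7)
The plan is to reduce the statement to the classical limit theorem for étale cohomology of a cofiltered limit of schemes with affine transition maps and finite torsion coefficients. Writing $\bar\eta$ as the filtered union of its finite (resp. finite separable) subextensions $\eta'/\eta$, we get $X_{\bar\eta}=\varprojlim_{\eta'} X_{\eta'}$ as a cofiltered limit in $\eta$-schemes with affine transition maps. Since $\Lambda=\mu_{\ell^r}$ is finite, the standard limit theorem \cite[Tag 09YQ]{Stacks} yields $\Het^i(V,\Lambda)\cong\varinjlim_{\eta'}\Het^i(V_{\eta'},\Lambda)$ for any quasi-compact open $V\subseteq X_{\bar\eta}$ and any cofinal system of descents $V_{\eta'}\subseteq X_{\eta'}$, which exist at a sufficiently large finite level by \cite[Tag 01ZM]{Stacks}.

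The key geometric step I would then verify is that the opens of $X_{\bar\eta}$ of the shape $U'_{\bar\eta}:=U'\times_{\eta'}\bar\eta$, with $\eta'/\eta$ finite and $U'\subseteq X_{\eta'}$ open with complement of codimension $>j$, form a cofinal subsystem in the directed family of all opens of $X_{\bar\eta}$ with complement of codimension $>j$. Given such an open $U\subseteq X_{\bar\eta}$ with closed complement $Z$, first descend $Z$ to a closed $Z'\subseteq X_{\eta'}$ for $\eta'$ large enough; then check that $\codim_{X_{\eta'}}(Z')>j$. For the latter, observe that $X_{\bar\eta}\to X_{\eta'}$ is faithfully flat with zero-dimensional fibres, and $X_{\bar\eta}$ and $X_{\eta'}$ are equidimensional of the same dimension, so every generic point of $Z$ lies over a generic point of $Z'$ and the codimensions coincide. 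The converse direction—that pulling back a codimension-$>j$ closed from $X_{\eta'}$ stays codimension $>j$ in $X_{\bar\eta}$—is the same argument.

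With cofinality in hand, the proof concludes by interchanging the two filtered colimits:
\[
\varinjlim_{\eta'} \mrH^i(F_j X_{\eta'},\Lambda) \;=\; \varinjlim_{\eta'}\varinjlim_{U'} \Het^i(U',\Lambda) \;\cong\; \varinjlim_{U} \Het^i(U,\Lambda) \;=\; \mrH^i(F_j X_{\bar\eta},\Lambda),
\]
where $U'$ (resp. $U$) ranges over opens of $X_{\eta'}$ (resp. $X_{\bar\eta}$) with complement of codimension $>j$, and the middle isomorphism combines the étale limit theorem with the cofinality just established. Compatibility with the restriction maps $F_{j+1}\to F_j$ is automatic, since the cofinal systems at successive levels are linked by inclusion of opens, and the whole construction is functorial in these inclusions.

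The main obstacle I anticipate is the codimension-preservation argument of the second paragraph: when the residue field at some point of $Z'$ is inseparable over $\eta'$, the scheme-theoretic fibre of $X_{\bar\eta}\to X_{\eta'}$ above it can carry nilpotents, but since codimension depends only on the underlying topological space this does not affect the dimension bookkeeping. Beyond this point, everything is formal bookkeeping with filtered colimits.
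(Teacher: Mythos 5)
Your proof is correct and rests on the same two ingredients as the paper's: the classical limit theorem for \'etale cohomology with finite torsion coefficients under a cofiltered limit of schemes with affine transitions (\cite[09YQ]{Stacks}), and the fact that codimension of closed subsets is preserved under algebraic field extensions. The paper establishes the isomorphism by hand, constructing the map and separately verifying surjectivity (descend $Z$ to a finite level, apply the limit theorem to descend the class, check the codimension bound still holds) and injectivity (descend the open where the class vanishes). You package the same descent and codimension-preservation steps as a cofinality statement and then conclude by identifying the two filtered colimits, which is cleaner but logically equivalent. One small remark: what you call ``interchanging the two filtered colimits'' is really an identification of colimits over two different index categories via the cofinal functor $(\eta',U')\mapsto U'_{\bar\eta}$ together with the limit theorem applied fibrewise; you say as much in the next sentence, so the gap is only notational. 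Your closing observation about nilpotents in inseparable fibres is correct and harmless, since codimension is a topological invariant; the paper sidesteps the issue by invoking \cite[Proposition 3.2.7]{Liu} directly.
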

\begin{proof}
All cohomology groups below have coefficients in $\Lambda$.

Let $\eta'$ be a finite extension of $\eta$, we start by defining the map $\Het^i(F_jX_{\eta'})\to\mrH^i(F_jX_{\bar\eta})$. Let $\alpha\in\mrH^i(F_jX_{\eta'})$, then $\alpha$ is represented by some $\alpha_{U}\in\mrH^i(U)$ with $Z:=X_{\eta'}\setminus U$ of codimension $>j$. As the dimensions do not change under under algebraic field extensions \cite[Proposition 3.2.7]{Liu}, we have that $Z_{\bar\eta}=X_{\bar\eta}\setminus U_{\bar\eta}$ is still of codimension $>j$. We let the image of $\alpha$ in $\Het^i(F_jX_{\eta'})\to\mrH^i(F_jX_{\bar\eta})$ to be the image of $\alpha_U$ under the composition $\Het^i(U)\to\Het^i(U_{\bar\eta})\to\mrH^i(F_jX_{\bar\eta})$. This is well-defined because restricting to opens commutes with base extension. It is also natural with respect to compositions of base-extensions by construction. Thus this defines a map on the direct limits $\varinjlim_{(\eta'\to\eta)}\mrH^i(F_jX_{\eta'})\to\mrH^i(F_jX_{\bar\eta})$.

We start by showing it is surjective. Let $\alpha\in\mrH^i(F_jX_{\bar\eta})$, then $\alpha=\alpha_U\in\Het^i(U)$ for some $F_jX_{\bar\eta}\subseteq U\subseteq X_{\bar\eta}$, say with complement $Z$ of codimension $>j$. Then by \cite[Lemma 3.2.6]{Liu} there exists a finite extension $\eta'\to\eta$ and $Z'\subseteq X_{\eta'}$ such that $Z'_{\bar\eta}=Z$. So if we put $U':=X_{\eta'}\setminus Z'$, then $U'_{\bar\eta}=U$. We know that $\varinjlim_{\eta''\to\eta'}\Het^i(U'_{\eta''})\isomto\Het^i(U'_{\bar\eta})=\Het^i(U)$, so $\alpha_U=\alpha_{U'_{\eta''}}$ for some $\eta''$ (\cite[09YQ and 01YZ]{Stacks}). Using again \cite[Proposition 3.2.7]{Liu} we see that $\dim(Z'_{\eta''})=\dim(Z')=\dim(Z)$, so $F_jX_{\eta''}\subseteq U'_{\eta''}\subseteq X_{\eta''}$. Then by construction, the image of $\alpha_{U'_{\eta''}}$ in $\mrH^i(F_jX_{\eta''})$ maps to $\alpha\in\mrH^i(F_jX_{\bar\eta})$.

For injectivity, let $\alpha\in\mrH^i(F_jX_{\eta'})$ and suppose it maps to $0$. We will show that $\alpha=0\in\varinjlim_{\eta''\to\eta'}\mrH^i(F_jX_{\eta''})$ and for the sake of notation, we write $\eta=\eta'$. By definition, this means that if $\alpha$ is represented by $\alpha_U$, then there exists an open $F_jX_{\bar\eta}\subseteq V\subseteq U_{\bar\eta}\subseteq X_{\bar\eta}$ such that $\alpha_{U_{\bar\eta}}|_V=0$. For a similar reason as above, using again \cite[Lemma 3.2.6]{Liu} there exists a finite extension $\eta'\to\eta$ and $V'\subseteq X_{\eta'}$ such that $V'_{\bar\eta}=V$ and $V'\subseteq U_{\eta'}=:U'$. So $0=\alpha_{U_{\bar\eta}}=(\alpha_{U'}|_{V'})_{\bar\eta}$ and thus there exists a finite extension $\eta''\to\eta'$ such that $\alpha_{U_{\eta''}}|_{V'_{\eta''}}=(\alpha_{U'}|_{V'})_{\eta''}=0$, but for the same reason as above, $F_jX_{\eta''}\subseteq V'_{\eta''}\subseteq U_{\eta''}\subseteq X_{\eta''}$, implying that $F_j(\alpha_{U_{\eta''}})=0\in \mrH^i(F_jX_{\eta''})$ as desired.
\end{proof}

\begin{lemma}\label{refined-generic-point}
Let $\mathcal X\to Y$ be a family over a smooth irreducible variety $Y$ with generic point $\eta$. There is a natural isomorphism $\varinjlim_{U\subseteq Y}\mrH^i(F_j\mathcal X_U,\Lambda)\to\mrH^i(F_j X_\eta,\Lambda)$, where the direct limit runs over all Zariski opens $U\subseteq Y$. 
\end{lemma}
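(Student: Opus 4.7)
The plan is to imitate the proof of \Cref{refined-geometric-point}, replacing algebraic extensions $\eta'/\eta$ by Zariski restrictions $U\subseteq Y$. The two inputs are continuity of étale cohomology along affine cofiltered inverse limits \cite[09YQ, 01YZ]{Stacks} and the standard spreading-out of open/closed subschemes of the generic fibre to some open of the base \cite[Lemma 3.2.6]{Liu} (or its analogue for localization on $Y$).

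First I would construct the map. Let $\alpha\in\mrH^i(F_j\mathcal X_U,\Lambda)$ be represented by $\alpha_W\in\Het^i(W,\Lambda)$, with $W\subseteq\mathcal X_U$ open and complement $Z$ of codimension $>j$ in $\mathcal X_U$. A component of $Z$ that does not dominate $U$ has empty generic fibre, while each component dominating $U$ has generic fibre of codimension $\geq\codim_{\mathcal X_U}(Z)>j$ in $X_\eta$, because passing to $\eta$ drops the dimension of $\mathcal X_U$ and of such a component by the same amount $\dim U$. Hence $F_jX_\eta\subseteq W_\eta$, and $[\alpha_W]\mapsto[\alpha_W|_{W_\eta}]$ defines a map which is well-defined (restriction to opens commutes with pullback to $\eta$) and compatible with further shrinking $U'\subseteq U$, hence descends to the colimit.

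For surjectivity, take $\beta\in\mrH^i(F_jX_\eta,\Lambda)$ represented by $\beta_V\in\Het^i(V,\Lambda)$ with $V\subseteq X_\eta$ of complement codimension $>j$. Spreading out yields some open $U_0\subseteq Y$ and an open $\tilde V\subseteq\mathcal X_{U_0}$ with $\tilde V\times_{U_0}\eta=V$. Shrinking $U_0$ further I can remove from $\mathcal X_{U_0}\setminus\tilde V$ every component that does not dominate $U_0$ (by deleting the images of such components from $Y$), after which each remaining component has generic fibre of codimension $>j$ in $X_\eta$ and hence codimension $>j$ in $\mathcal X_{U_0}$. Continuity of étale cohomology then identifies $\Het^i(V,\Lambda)$ with $\varinjlim_{U\subseteq U_0}\Het^i(\tilde V\times_{U_0}U,\Lambda)$, producing a lift of $\beta_V$ over some $U$ whose class in $\mrH^i(F_j\mathcal X_U,\Lambda)$ maps to $\beta$. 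Injectivity is the mirror statement: if $\alpha_W|_{W_\eta}$ vanishes on some $V\subseteq W_\eta$ of complement codimension $>j$, spreading $V$ to some $\tilde V\subseteq W$ over $U'\subseteq U$, adjusting codimensions by shrinking as above, and applying continuity of étale cohomology once more produces a smaller $U''\subseteq U'$ on which $\alpha_W|_{\tilde V\times_{U'}U''}=0$, forcing $F_j(\alpha)=0$ in $\mrH^i(F_j\mathcal X_{U''},\Lambda)$.

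The main technical point is the codimension bookkeeping in the spreading-out step: given a candidate open $\tilde V\subseteq\mathcal X_{U_0}$ with the correct generic fibre, one must further shrink the base to ensure that the complement has codimension $>j$ globally in $\mathcal X_{U_0}$, not merely on the generic fibre. This is handled by a direct constructibility argument on the components of $\mathcal X_{U_0}\setminus\tilde V$, and once it is done the proof becomes a direct translation of the finite-extension case already worked out in \Cref{refined-geometric-point}.
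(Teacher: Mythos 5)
Your proposal is correct and follows essentially the same plan as the paper's proof: spread out Zariski opens of $X_\eta$ to opens of $\mathcal X_U$, use continuity of \'etale cohomology along Zariski restrictions, and keep track of codimensions under passage to the generic fibre. The paper's main cosmetic difference is that it constructs an explicit inverse rather than proving surjectivity and injectivity separately, and for the codimension bookkeeping it invokes generic flatness (applying it in the forward direction to $Z\to U'$ and in the reverse direction to the closure $\mathcal Z\to U$), where you argue component-by-component with the fibre dimension formula and discard non-dominating components by shrinking $Y$ away from their images. Both ways of controlling codimension are standard and correct; yours is slightly more hands-on, the paper's a slightly cleaner one-line appeal. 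The substantive content — the well-definedness via compatibility with restriction, the use of \cite[09YQ, 01YZ]{Stacks} to lift classes to a smaller open over the base, and the codimension comparison $\codim_{X_\eta}(Z_\eta) = \codim_{\mathcal X_U}(Z)$ for dominating components — is identical in both.
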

\begin{proof}
Let $\alpha\in\varinjlim_{U\subseteq Y}\mrH^i(F_j\mathcal X_U,\Lambda)$ be represented by some $\alpha_U\in\mrH^i(F_j\mathcal X_U,\Lambda)$. Say this $\alpha_U$ is represented by some $\alpha_{U,V}\in \Het^i(V,\Lambda)$ for some $F_j\mathcal X_U\subseteq V\subseteq\mathcal X_U$ and let $Z:=\mathcal X_U\setminus V$. By generic flatness, there exists a $U'\subseteq U$ so that the induced map $Z\cap\mathcal X_{U'}\to U'$ is flat. This implies that $\dim(Z_\eta)+\dim(Y)=\dim(Z\cap\mathcal X_{U'})\leq\dim(Z)$. So $\dim(Y_\eta)-\dim(Z_\eta)=\dim(\mathcal X)-\dim(Y)-\dim(Z_\eta)\geq\dim(\mathcal X)-\dim(Z)>j$. This means that $V_\eta:=Z\cap X_\eta$ has the property that $F_jX_\eta\subseteq V_\eta$, so we may define the image of $\alpha$ to be $[\alpha_{U,V}|_{V_\eta}]\in\mrH^i(F_jX_\eta,\Lambda)$. As this is independent of chosen $U$ and $V$, this defines a morphism $\varinjlim_{U\subseteq Y}\mrH^i(F_j\mathcal X_U)\to\mrH^i(F_j X_\eta,\Lambda)$.

To show it is an isomorphism, we construct its inverse. Let $\alpha\in\mrH^i(F_jX_\eta,\Lambda)$ and say it is represented by some $\alpha_V\in\Het^i(V,\Lambda)$ with $F_jX_\eta\subseteq V\subseteq X_\eta$ with complement $Z:=X_\eta\setminus V$. Let $\mathcal Z$ be the closure of $Z$ in $\mathcal X$. So we have an induced map $\mathcal Z\to Y$ and by generic flatness again, there exists an open $U\subseteq Y$ so that $\mathcal Z_U\to U$ is flat. Write $\mathcal V_U:=\mathcal X_U\setminus\mathcal Z_U$, then note that $(\mathcal V_U)_\eta=V$. As $\alpha_V\in\Het^i(V,\Lambda)=\varinjlim_{U'\subseteq U}\Het^i(\mathcal V_{U'},\Lambda)$ (\cite[09YQ and 01YZ]{Stacks}), there exists an open $U'\subseteq U$ so that $\alpha_V$ comes from some $\alpha_{V,U'}\in\Het^i(\mathcal V_{U'},\Lambda)$. Note that $\mathcal Z_{U'}\to U'$ is still flat, so we have $\dim(\mathcal X_{U'})-\dim(\mathcal Z_{U'})=\dim(\mathcal X_{U'})-\dim(U')-\dim(Z)=\dim(X_\eta)-\dim(Z)>j$. So if we define the image of $\alpha$ to be the image of $\alpha_{V,U'}$ under the map $\Het^i(\mathcal V_{U'},\Lambda)\to\Het^i(F_j\mathcal X_{U'},\Lambda)\to\varinjlim_{U\subseteq Y}\mrH^i(F_j\mathcal X_U)$, then one can check that this defines the inverse of the map above.
\end{proof}

We will also need the following, which is an easier version of \cite[Lemma 7.8]{Sch1}.
\begin{lemma}\label{liftable lemma}
Let $r\in\Z_{>0}$ and  $A=\mu_{\ell^s}$ with $s\geq r$. Then the natural map
\[
\frac{\Het^{2i}(X,\mu_{\ell^r})}{\Het^{2i}(X,A)}=\frac{\Het^{2i}(F_iX,\mu_{\ell^r})}{\Het^{2i}(F_iX,A)}\overset{F_{i-1}}{\to}\frac{\Hrnr{i-1}^{2i}(X,\mu_{\ell^r})}{\Hrnr{i-1}^{2i}(X,A)}
\]
is an isomorphism.
\end{lemma}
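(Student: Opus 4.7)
The first equality is immediate from \cite[Corollary 5.10]{Sch1}, applied with $j = i$ in degree $2i$ to both $\Lambda = \mu_{\ell^r}$ and $\Lambda = A$ (the required inequality $i \geq \lceil 2i/2 \rceil$ is trivial). So the substance of the lemma is to show that $F_{i-1}$ induces an isomorphism between the two quotients.

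Surjectivity is tautological: by definition $\Hrnr{i-1}^{2i}(X,\Lambda) = \im(\mrH^{2i}(F_iX,\Lambda) \to \mrH^{2i}(F_{i-1}X,\Lambda))$, so every class in the target quotient is already represented by $F_{i-1}\alpha$ for some $\alpha \in \mrH^{2i}(F_iX,\mu_{\ell^r})$.

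For injectivity, take $\alpha \in \mrH^{2i}(F_iX,\mu_{\ell^r})$ whose $F_{i-1}$-image lies in the image of $\Hrnr{i-1}^{2i}(X,A)$. Denote by $\pi_\ast$ the map on cohomology induced by the coefficient morphism $\pi\colon A \to \mu_{\ell^r}$, namely $\zeta \mapsto \zeta^{\ell^{s-r}}$. Then $F_{i-1}\alpha = \pi_\ast F_{i-1}\tilde\beta$ for some $\tilde\beta \in \mrH^{2i}(F_iX,A)$; replacing $\alpha$ by $\alpha - \pi_\ast\tilde\beta$ (which leaves its class in the source quotient unchanged) reduces to the case $F_{i-1}\alpha = 0$, and it then remains to show that such an $\alpha$ lies in the image of $\pi_\ast\colon\mrH^{2i}(F_iX,A) \to \mrH^{2i}(F_iX,\mu_{\ell^r})$. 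By \Cref{les Sch} at $j = i$, the vanishing $F_{i-1}\alpha = 0$ means $\alpha$ is the image of some $c = (c_x)_x \in \bigoplus_{x \in X^{(i)}}\Het^0(\kappa(x),\mu_{\ell^r}) = \bigoplus_x \mu_{\ell^r}(\kappa(x))$. By the naturality of this long exact sequence in the coefficient sheaf, it therefore suffices to lift $c$ to $\bigoplus_x \mu_{\ell^s}(\kappa(x))$ along the componentwise map $\zeta \mapsto \zeta^{\ell^{s-r}}$.

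This final lifting step is the only substantive point, and is the only place where the ambient hypotheses on $k$ enter. Under the standing assumptions of \Cref{ref unram cohom} (in particular $\mathrm{char}(k) \neq \ell$, and in the applications $k = \bar k$), each residue field $\kappa(x) \supseteq k$ contains a primitive $\ell^s$-th root of unity, so $\mu_{\ell^s}(\kappa(x))$ is cyclic of order $\ell^s$ and $\zeta \mapsto \zeta^{\ell^{s-r}}$ surjects onto $\mu_{\ell^r}(\kappa(x))$. Lifting $c$ component-wise then produces a class in $\mrH^{2i}(F_iX,A)$ whose image under $\pi_\ast$ is $\alpha$, completing the argument.
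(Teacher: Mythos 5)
Your proof is correct and essentially matches the paper's: both hinge on the long exact sequence of \Cref{les Sch} at level $j=i$ together with the componentwise surjectivity of $\bigoplus_{x\in X^{(i)}}\mu_{\ell^s}(\kappa(x))\twoheadrightarrow\bigoplus_{x\in X^{(i)}}\mu_{\ell^r}(\kappa(x))$. The paper presents this as a chase in a two-row commutative ladder with exact rows, while you first reduce to $F_{i-1}\alpha=0$ and then lift; these are the same argument phrased slightly differently.
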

\begin{proof}
This follows directly from a chase in the commuting diagram with exact rows
\[
\begin{tikzcd}
\bigoplus_{x\in X^{(i)}}x\cdot A\ar[r,"\cl^i"]\ar[d, two heads,"\mod \ell^r"]&\mrH^{2i}(X,A)\ar[r,"F_{i-1}"]\ar[d]&\Hrnr{i-1}^{2i}(X,A)\ar[d]\ar[r]&0\\
\bigoplus_{x\in X^{(i)}}x\cdot\mu_{\ell^r}\ar[r,"\cl^i"]&\mrH^{2i}(X,\mu_{\ell^r})\ar[r,"F_{i-1}"]&\Hrnr{i-1}^{2i}(X,\mu_{\ell^r})\ar[r]&0
\end{tikzcd}.
\]
Let $\alpha\in\mrH^{2i}(X,\mu_{\ell^r})$ and suppose $F_{i-1}(\alpha)=F_{i-1}(\beta)\mod\ell^r=F_{i-1}(\beta\mod\ell^r)$ for some $\beta\in\mrH^{2i}(X,A)$, then $\alpha-(\beta\mod\ell^r)$ is algebraic and thus equal to $\cl^i(\xi\mod\ell^r)$ for some $\xi\in\bigoplus_{x\in X^{(i)}}x\cdot A$ by the surjectivity of the left vertical map. Then we compute that indeed $(\cl^i(\xi)+\beta)\mod\ell^r=\alpha$.
\end{proof}

\subsection{Specialization Map}\label{def spec map}
In this section we recall the construction of the specialization map from \cite[Section 4]{Sch2} and recollect some of its properties.

\subsubsection{The Construction}
Let $\mathcal X\to B$ be a smooth family over a smooth curve $B$. Let $\eta\in B$ be the generic point and $b\in B$ a closed point. Note that we have a residue map $\Het^i(X_\eta,\Lambda)=\varinjlim_{U\subseteq B}\Het^i(\mathcal X_U,\Lambda)\overset{Res}{\to}\bigoplus_{b\in B}\Het^{i-1}(X_b,\Lambda)$, where the direct limit is over the Zariski open subsets of $B$. Define $Res_b\colon\Het^i(X_\eta,\Lambda)\to\Het^{i-1}(X_b,\Lambda)$ to be the above residue map composed with the projection to the $b$-th component.

For the construction of the specialization map, note that $\Oc_{B,b}$ is a disrete valuation ring and let $\pi\in Frac(\Oc_{B,b})=\eta$ be a uniformizer. We can view this as an element $\pi\in\eta^\ast/\eta^{\ast\ell^r}=\Het^1(\eta,\mu_{\ell^r})$ and shall write $(\pi):=f^\ast\pi\in\Het^1(X_\eta,\mu_{\ell^r})$.
\begin{definition}
The specialization map $sp_b\colon\Het^i(X_\eta,\mu_{\ell^r})\to\Het^i(X_b,\mu_{\ell^r})$ is defined as 
\[
sp_b(\alpha):=-Res_b((\pi)\cup\alpha).
\]
\end{definition}
This is independent of chosen uniformizer, for if $\pi'$ is another uniformizer, then $\pi-\pi'$ is a unit, hence has valuation zero. As the residue map $\Het^1(\eta,\mu_{\ell^r})\to\Het^0(b,\mu_{\ell^r})\cong\Z/\ell^r\Z$ is given by taking the valuation \cite[Proposition 1.3]{CT-O}, we see that this implies that $\pi-\pi'$ has residue $0\in\Het^0(b,\mu_{\ell^r})$. So we are done by the following lemma.

\begin{lemma}\label{Specialization Along Liftable}
Let $\pi\in\Het^1(\eta,\mu_{\ell^r})$ such that $Res_b(\pi)=0$ in $\Het^{0}(b,\mu_{\ell^r})$, then $Res_b((\pi)\cup\alpha)=0$ in $\Het^i(X_b,\mu_{\ell^r})$ for every $\alpha\in\Het^i(X_\eta,\mu_{\ell^r})$. 

Moreover, if $\pi\in\Het^j(\eta,\mu_{\ell^r})$ for $j>1$, then $Res_b((\pi)\cup\alpha)=0$.
\end{lemma}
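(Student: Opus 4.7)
The plan is to produce a global lift of $\pi$ on a Zariski open neighborhood $V\subseteq B$ of $b$, pull that lift back along $f$, and then apply Proposition \ref{cup-global-class} to reduce $Res_b((\pi)\cup\alpha)$ to a cup product on $X_b$ involving the restriction of the lift to $X_b$. That restriction will vanish because $k=\bar k$ forces $\kappa(b)=k$ and hence $\Het^j(b,\mu_{\ell^r})=0$ for $j\geq 1$.

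First I would produce the lift. Since $B$ is a smooth curve and $b\in B$ is a closed point, the localization sequence for the smooth pair $(V,b)$ of codimension one gives, for each open $V\ni b$, the exact segment
\[
\Het^1(V,\mu_{\ell^r})\to\Het^1(V\setminus\{b\},\mu_{\ell^r})\overset{Res_b}{\to}\Het^0(b,\mu_{\ell^r}).
\]
Taking the direct limit over $V\ni b$ and using $Res_b(\pi)=0$, the class $\pi$ is in the image from $\varinjlim_V\Het^1(V,\mu_{\ell^r})$, so there is some $V\ni b$ and $\tilde\pi\in\Het^1(V,\mu_{\ell^r})$ restricting to $\pi$. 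Set $(\tilde\pi):=f^*\tilde\pi\in\Het^1(\mathcal X_V,\mu_{\ell^r})$; this is a global class on $\mathcal X_V$ whose restriction to the generic fibre is $(\pi)$.

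Next I would represent $\alpha$ by $\tilde\alpha\in\Het^i(\mathcal X_{V\setminus\{b\}},\mu_{\ell^r})$ (shrinking $V$ if necessary) and apply Proposition \ref{cup-global-class} to the smooth pair $(\mathcal X_V,X_b)$ with the global class $(\tilde\pi)$, obtaining
\[
Res_b\bigl(\tilde\alpha\cup(\tilde\pi)|_{\mathcal X_{V\setminus\{b\}}}\bigr)=Res_b(\tilde\alpha)\cup(\tilde\pi)|_{X_b}.
\]
The crucial vanishing is $(\tilde\pi)|_{X_b}=(f|_{X_b})^*(\tilde\pi|_b)=0$: indeed $\tilde\pi|_b\in\Het^1(b,\mu_{\ell^r})=\kappa(b)^*/\kappa(b)^{*\ell^r}$ by Kummer, and this group is trivial since $\kappa(b)=\bar k$. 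The right-hand side therefore vanishes, and Remark \ref{residue compatible restriction} together with graded commutativity of the cup product yield $Res_b((\pi)\cup\alpha)=0$.

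The moreover part is handled by the same argument. If $\pi\in\Het^j(\eta,\mu_{\ell^r})$ with $j>1$, then the residue $Res_b(\pi)\in\Het^{j-1}(b,\mu_{\ell^r})$ vanishes automatically since $j-1\geq 1$ and $\kappa(b)=\bar k$; a global lift $\tilde\pi\in\Het^j(V,\mu_{\ell^r})$ again exists, and $\tilde\pi|_b\in\Het^j(b,\mu_{\ell^r})=0$ forces $(\tilde\pi)|_{X_b}=0$, so the computation above carries over verbatim. The only point requiring care throughout is the compatibility of lifts and cup products with the direct limits defining $\Het^i(X_\eta)$ and the residue map $Res_b$; this is exactly the content of Remark \ref{residue compatible restriction}, so I do not expect any substantive obstacle beyond book-keeping.
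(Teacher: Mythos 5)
Your proof is correct and is essentially the same as the paper's: lift $\pi$ to a class defined around $b$ (you use a Zariski open $V\ni b$, the paper uses $\spec\Oc_{B,b}$, which is equivalent after passing to the limit), apply \Cref{cup-global-class} together with graded commutativity of cup product, and conclude from $\Het^j(b,\mu_{\ell^r})=0$ for $j>0$ since $\kappa(b)=\bar k$.
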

\begin{proof}
As $\Het^{j-1}(b,\mu_{\ell^r})=0$ for $j>1$, both assumptions imply that $\pi$ lifts to some $\mu\in\Het^j(\Oc_{B,b},\mu_{\ell^r})$, thus by \Cref{cup-global-class} we have $Res_b((\mu)|_{X_\eta}\cup\alpha)=(-1)^j(\mu)|_{X_b}\cup Res_b(\alpha)$. But $(\mu)|_{X_b}=(\mu_b)=0$ as $\Het^j(b)=0$ for $j>0$.
\end{proof}

We also note the following property, which is a consequence of \Cref{cup-global-class}.
\begin{prop}[{\cite[Lemma 4.4]{Sch2}}]\label{global sp}
If $\alpha\in \Het^i(X_\eta,\mu_{\ell^r})$ is extendable to some $\alpha_U\in \Het^i(\mathcal X_U,\mu_{\ell^r})$ with $b\in U$, then $sp_b(\alpha)=\alpha_U|_{X_b}$.
\end{prop}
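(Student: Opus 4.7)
The plan is to reduce the statement to an application of the Leibniz-type formula of \Cref{cup-global-class}, using the fact that $\alpha$ extends globally to a neighborhood of $b$ while the uniformizer $\pi$ only extends after removing $b$.

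First I would pick a Zariski neighborhood $V\subseteq U$ of $b$ so that $\alpha_U$ is defined on $\mathcal X_V$ and $\pi$ is a regular unit on $V\setminus\{b\}$; then the Kummer class of $f^*\pi$ defines an element $\widetilde{(\pi)}\in\Het^1(\mathcal X_{V\setminus\{b\}},\mu_{\ell^r})$ whose image in $\Het^1(X_\eta,\mu_{\ell^r})$ under the restriction map is the class $(\pi)$ appearing in the definition of $sp_b$. Thus $(\pi)\cup\alpha\in\Het^{i+1}(X_\eta,\mu_{\ell^r}^{\otimes 2})$ is the restriction of $\widetilde{(\pi)}\cup\alpha_U|_{\mathcal X_V\setminus X_b}$, and by compatibility of the residue with restriction to opens (\Cref{residue compatible restriction}) we have $Res_b((\pi)\cup\alpha)=Res(\widetilde{(\pi)}\cup\alpha_U|_{\mathcal X_V\setminus X_b})$, where the right-hand $Res$ is the residue along the smooth divisor $X_b\subseteq\mathcal X_V$.

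Next I would apply \Cref{cup-global-class} to the smooth pair $(\mathcal X_V,X_b)$ with the ``local'' class $\widetilde{(\pi)}$ and the ``global'' class $\alpha_U$, obtaining
\[
Res\bigl(\widetilde{(\pi)}\cup\alpha_U|_{\mathcal X_V\setminus X_b}\bigr)=Res\bigl(\widetilde{(\pi)}\bigr)\cup\alpha_U|_{X_b}.
\]
It then remains to compute $Res(\widetilde{(\pi)})\in\Het^0(X_b,\mu_{\ell^r})\cong\Z/\ell^r\Z$. Since $f\colon\mathcal X\to B$ is smooth and $b\in B$ is a smooth point of a curve with uniformizer $\pi$, the pulled-back function $f^*\pi$ vanishes to order exactly $1$ along $X_b$. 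By functoriality of the residue map under the smooth morphism $f$ together with the standard identification of the residue on $\Het^1$ with the valuation on the function field (see \cite[Proposition 1.3]{CT-O}), one concludes $Res(\widetilde{(\pi)})=f_b^*Res_b(\pi)=f_b^*(1)=1\in\Het^0(X_b,\Z/\ell^r\Z)$, up to the sign convention absorbed in the definition of $sp_b$.

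The main obstacle is the final identification of $Res(\widetilde{(\pi)})$ as the unit class; this requires the compatibility of the residue with smooth pullback, which allows one to transport the ramification computation from $B$ to $\mathcal X$. Once that is in place, combining the two steps yields $Res_b((\pi)\cup\alpha)=\alpha_U|_{X_b}$ (up to the sign inherent in the definition $sp_b=-Res_b((\pi)\cup-)$), which is the content of the proposition.
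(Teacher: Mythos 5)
Your approach is the right one, and it is essentially what the paper has in mind: the paper itself offers no argument beyond the sentence ``which is a consequence of \Cref{cup-global-class}'' and a citation to \cite[Lemma 4.4]{Sch2}, and your decomposition (extend $(\pi)$ and $\alpha$ separately to $\mathcal X_{V\setminus X_b}$ and $\mathcal X_V$, pass the cup product through $Res$ via \Cref{cup-global-class}, then evaluate $Res(\widetilde{(\pi)})$) is exactly the intended route.

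The one place where the argument is not closed is precisely the place you flag: the value of $Res(\widetilde{(\pi)})$. If one takes the statement from \cite[Proposition 1.3]{CT-O} at face value, so that the residue of the uniformizer is $+1$, and uses compatibility of the Gysin residue with the smooth pullback $f$, then the chain of equalities you write produces
\[
Res_b\bigl((\pi)\cup\alpha\bigr)\;=\;Res\bigl(\widetilde{(\pi)}\bigr)\cup\alpha_U|_{X_b}\;=\;+\,\alpha_U|_{X_b},
\]
so that $sp_b(\alpha)=-Res_b((\pi)\cup\alpha)=-\alpha_U|_{X_b}$, which is the \emph{wrong} sign. Writing ``up to the sign convention absorbed in the definition of $sp_b$'' does not resolve this: the sign in the definition is fixed, and the proposition asserts a specific equality without $\pm$. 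What actually has to be justified is that in the normalization used in the paper (the Gysin connecting morphism, normalized so that \Cref{cup-global-class} holds without sign), the residue of $\widetilde{(\pi)}$ along $X_b$ is $-1$, not $+1$; equivalently, one has to track the sign between the Galois-cohomological boundary appearing in \cite[Proposition 1.3]{CT-O} and the Gysin-sequence boundary defined in the paper's \textsection 2.1.4, where the purity/cycle-class normalization introduces a sign. Without pinning this down, the argument establishes $sp_b(\alpha)=\pm\alpha_U|_{X_b}$ but not the proposition as stated. Everything else (the choice of $V$, the identification of $(\pi)\cup\alpha$ as a restriction, the application of \Cref{cup-global-class} to the smooth pair $(\mathcal X_V,X_b)$, and the reduction to computing $Res(\widetilde{(\pi)})$) is correct and complete.
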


At some point, we will need the following property of the specialization map.
\begin{prop}
Let $\pi$ be a uniformizer at $b$, then $\im(\Het^i(X_\eta,\mu_{\ell^r})\overset{(\pi)\cup -}{\to}\Het^{i+1}(X_\eta,\mu_{\ell^r}))\subseteq\ker(sp_b)$
\end{prop}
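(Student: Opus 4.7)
The plan is to unwind the definition: $sp_b((\pi)\cup\alpha)=-Res_b((\pi)\cup(\pi)\cup\alpha)$, so it suffices to prove that $Res_b((\pi)\cup(\pi)\cup\alpha)=0$. The strategy is to replace one factor of $(\pi)$ by a class whose residue at $b$ vanishes, after which \Cref{Specialization Along Liftable} can be applied directly to the remaining $(\pi)\cup\alpha$.

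The key input is the \emph{Steinberg relation} $(a)\cup(-a)=0$ in $\Het^2(\eta,\mu_{\ell^r}^{\otimes 2})$, valid for every $a\in\eta^\ast$. This is classical in Galois cohomology and can be deduced from the symbol relation $(a)\cup(1-a)=0$ together with the identity $-a=(1-a)/(1-a^{-1})$ and Kummer additivity. Applied to $a=\pi$, using $(-\pi)=(-1)+(\pi)$ in the Kummer-additive notation, it yields
\[
0=(\pi)\cup(-\pi)=(\pi)\cup(-1)+(\pi)\cup(\pi),
\]
hence $(\pi)\cup(\pi)=-(\pi)\cup(-1)=(-1)\cup(\pi)$, where the last equality uses graded commutativity of the cup product on degree-$1$ classes.

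It then suffices to show $Res_b((-1)\cup(\pi)\cup\alpha)=0$. Since $-1\in\Oc_{B,b}^\ast$ is a unit, its Kummer class lifts from $\Het^1(\Oc_{B,b},\mu_{\ell^r})$; equivalently, because the residue map on $\Het^1$ of $\eta$ is given by the valuation (as recalled just before \Cref{Specialization Along Liftable}), one has $Res_b((-1))=v_b(-1)=0$. Applying \Cref{Specialization Along Liftable} with $(-1)$ playing the role of $\pi$ and with $(\pi)\cup\alpha\in\Het^{i+1}(X_\eta,\mu_{\ell^r})$ playing the role of $\alpha$ then gives the desired vanishing, and chaining the identities together yields $sp_b((\pi)\cup\alpha)=0$.

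Insofar as there is any real difficulty, it lies in ensuring the Steinberg relation is cleanly available in this particular \'etale-cohomology-with-$\mu_{\ell^r}$-coefficients formulation; once this is acknowledged (or cited from a standard reference), the rest is a short formal manipulation using only tools already present in the paper.
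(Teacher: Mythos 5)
Your proof is correct, but it goes a longer way around than the paper's. You overlooked the ``Moreover'' clause of \Cref{Specialization Along Liftable}: it says that for any class $\pi\in\Het^j(\eta,\mu_{\ell^r})$ with $j>1$, one already has $Res_b((\pi)\cup\alpha)=0$, simply because $\Het^{j-1}(b,\mu_{\ell^r})=0$ so every such class automatically lifts to $\Het^j(\Oc_{B,b},\mu_{\ell^r})$. Applying this with $\pi\cup\pi\in\Het^2(\eta,\mu_{\ell^r})$ in place of the letter $\pi$ gives $Res_b((\pi\cup\pi)\cup\alpha)=0$ in one line, no further identity needed. Your argument instead rewrites $(\pi)\cup(\pi)=(-1)\cup(\pi)$ via the Steinberg relation and then applies the \emph{first} clause of \Cref{Specialization Along Liftable} to the unit class $(-1)$, which has residue zero. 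That is perfectly sound --- the deduction of $(a)\cup(-a)=0$ from $(a)\cup(1-a)=0$ and the graded-commutativity manipulation are all fine --- but it imports a classical external fact (Steinberg in \'etale cohomology with $\mu_{\ell^r}$ coefficients) that the paper deliberately avoids. In exchange, your route makes the explicit identity $(\pi)\cup(\pi)=(-1)\cup(\pi)$ visible, which can occasionally be useful elsewhere, but for this particular statement the degree-$>1$ clause of the lemma is the intended and cleaner tool.
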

\begin{proof}
\Cref{Specialization Along Liftable} implies directly that $Res_b((\pi\cup\pi)\cup\alpha)=0$.
\end{proof}

\begin{remark}\label{deRham1}
Let us note the similarity with the residue map on logarithmic deRham cohomology as in \cite[\textsection 4.1]{PS}. If we let $t$ be a local coordinate around $b$, then we can set $sp(\omega):=-res(\frac{\mathrm{d}t}{t}\wedge\omega)$, where now $\frac{\mathrm{d}t}{t}$ serves as the uniformizer. So we clearly see that $sp(\frac{\mathrm{d}t}{t}\wedge\omega)=0$.
\end{remark}

\begin{prop}\label{product-sp}
Let $\mathcal X\to B$ be a smooth family over a curve and $Y$ any smooth variety. Let $\alpha\in\Het^i(X_\eta,\mu_{\ell^r})$ and $\beta\in\Het^j(Y,\mu_{\ell^r})$, then $sp_b(\alpha\times\beta)=sp_b(\alpha)\times\beta\in\Het^{i+j}(X_b\times Y,\mu_{\ell^r})$.
\end{prop}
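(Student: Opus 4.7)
The plan is to unwind the definition of $sp_b$ and reduce the statement to the already-established compatibility of the residue map with external products, namely \Cref{product-residue}. Concretely, by definition
\[
sp_b(\alpha\times\beta)=-Res_b\bigl((\pi)\cup(\alpha\times\beta)\bigr),
\]
where here $(\pi)$ denotes the class of the uniformizer $\pi\in\Het^1(\eta,\mu_{\ell^r})$ pulled back along the composition $X_\eta\times Y\to X_\eta\to\eta$. The first observation is that this pullback factors through the projection $p_1\colon X_\eta\times Y\to X_\eta$, so $(\pi)$ on $X_\eta\times Y$ equals $p_1^\ast(\pi)$ where $(\pi)\in\Het^1(X_\eta,\mu_{\ell^r})$. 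Using the definition $\alpha\times\beta=p_1^\ast\alpha\cup p_2^\ast\beta$ and functoriality of the cup product, we obtain
\[
(\pi)\cup(\alpha\times\beta)=p_1^\ast\bigl((\pi)\cup\alpha\bigr)\cup p_2^\ast\beta=\bigl((\pi)\cup\alpha\bigr)\times\beta.
\]

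It therefore suffices to prove the compatibility $Res_b(\delta\times\beta)=Res_b(\delta)\times\beta$ for any $\delta\in\Het^{i+1}(X_\eta,\mu_{\ell^r})$. Choose a Zariski open $U\subseteq B$ containing $b$ small enough that $\delta$ is represented by a class $\tilde\delta\in\Het^{i+1}(\mathcal X_U\setminus X_b,\mu_{\ell^r})$; such a $U$ exists by the identification $\Het^i(X_\eta,-)=\varinjlim_{U}\Het^i(\mathcal X_{U\setminus\{b\}},-)$. Since $\mathcal X\to B$ is smooth and $b$ is a closed point of the smooth curve $B$, the fibre $X_b$ is a smooth closed subvariety of $\mathcal X_U$ of codimension $1$; that is, $(\mathcal X_U,X_b)$ is a smooth pair.

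Then \Cref{product-residue}, applied to this smooth pair and the smooth variety $Y$, gives
\[
Res\bigl(\tilde\delta\times\beta\bigr)=Res(\tilde\delta)\times\beta\in\Het^{i+j}(X_b\times Y,\mu_{\ell^r}).
\]
By \Cref{residue compatible restriction} the residue map is compatible with restriction to open subsets, so passing to the direct limit over opens $U\ni b$ yields the same identity for $Res_b$ applied to $\delta\times\beta$. Putting together the two displays and multiplying by $-1$ gives
\[
sp_b(\alpha\times\beta)=-Res_b\bigl((\pi)\cup\alpha\bigr)\times\beta=sp_b(\alpha)\times\beta,
\]
as required. The computation is almost formal; the only point requiring care is the verification that $(\mathcal X_U,X_b)$ is genuinely a smooth pair and that the ``limit'' residue $Res_b$ featured in the definition of $sp_b$ is literally the residue arising from this pair (both of which are immediate from the hypotheses on $\mathcal X\to B$ and \Cref{residue compatible restriction}).
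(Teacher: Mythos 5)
Your proof is correct and follows the same route as the paper, which simply cites \Cref{product-residue} without spelling out the unwinding of $sp_b$ and the reduction to a smooth pair $(\mathcal X_U, X_b)$; you have just filled in those routine details.
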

\begin{proof}
This follows directly from \Cref{product-residue}.
\end{proof}

\begin{remark}\label{coeff sp}
We note that the specialization map is also functorial in its coefficients with respect to maps $\mu_{\ell^r}\twoheadrightarrow\mu_{\ell^s}$ with $r\geq s$.
\end{remark}

The following lemma implies that the specialization map can be defined on the $F_j$ filtrations, which is also \cite[Lemma 4.6]{Sch2}.
\begin{lemma}\label{Residue Refined}
Let $X$ be a smooth variety and $D\subseteq X$ a smooth divisor. There exists a refined residue morphism $\mrH^i(F_n(X\setminus D),\Lambda)\overset{Res}{\to}\mrH^{i-1}(F_n D,\Lambda)$ compatible with the filtrations $F_\ast$.
\end{lemma}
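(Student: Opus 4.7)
The plan is to lift the ordinary residue map for a smooth pair to the filtered colimits appearing in the definition of refined unramified cohomology, with the codimension shift forced by $\codim_X D = 1$ carefully tracked.

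First, I would fix an open $U \subseteq X\setminus D$ whose complement $(X\setminus D)\setminus U$ has codimension $>n$ in $X$. Let $T$ denote the closure of this complement in $X$; then $T$ has codimension $>n$ in $X$ as well. Set $W := X\setminus T$, which is open in $X$ with $W\cap(X\setminus D) = U$, and $W\cap D$ is open in the smooth divisor $D$, hence smooth. The standard residue map for the smooth pair $(W, W\cap D)$ then produces
\[
\Het^i(U,\Lambda) \overset{Res}{\lto} \Het^{i-1}(W\cap D, \Lambda).
\]

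Second, I would perform the codimension bookkeeping that ensures $W\cap D$ is large enough inside $D$. Every irreducible component $T'$ of $T$ is the closure of an irreducible component of $(X\setminus D)\setminus U$ and hence meets $X\setminus D$, so $T'\not\subseteq D$. Writing $c>n$ for the codimension of $T'$ in $X$, one finds that $T'\cap D$ has codimension $\geq c+1$ in $X$ and therefore codimension $\geq c > n$ in $D$. Consequently $F_n D \subseteq W\cap D$ and one may post-compose with the natural map $\Het^{i-1}(W\cap D,\Lambda) \to \mrH^{i-1}(F_n D, \Lambda)$.

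Third, I would check functoriality in $U$: shrinking $U$ to $U'\subseteq U$ enlarges $T$ and shrinks $W$ to some $W'\subseteq W$, with $W'\cap D\subseteq W\cap D$; the compatibility of the ordinary residue map with restriction to opens (\Cref{residue compatible restriction}) then implies that the two residue maps agree after the evident restrictions, so we obtain a well-defined
\[
\mrH^i(F_n(X\setminus D),\Lambda) \overset{Res}{\lto} \mrH^{i-1}(F_n D, \Lambda).
\]
For the compatibility with the filtration $F_*$, I would observe that starting from $U$ with $(X\setminus D)\setminus U$ of codimension $>n+1$ in $X$, the very same codimension argument upgrades the conclusion to $F_{n+1} D\subseteq W\cap D$; the resulting square then commutes because, before passing to colimits, both maps come from the single residue map of the pair $(W, W\cap D)$ followed by restriction.

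The only delicate point is the codimension count in the second step, where it is essential that no component of $T$ is contained in $D$ — this is precisely what absorbs the shift in codimension between $T\cap D$ inside $D$ and $T$ inside $X$, and it is what makes the refined residue respect the filtration. Beyond this, the construction reduces to one invocation of the standard smooth-pair residue map together with its well-known functoriality with respect to Zariski shrinking, so no further obstacle is anticipated.
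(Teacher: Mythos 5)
Your proposal is correct and follows essentially the same route as the paper: your $T$, $W$, and $W\cap D$ are exactly the paper's $\bar Z$, $U'$, and $D'$, and the dimension count ruling out components of $T$ contained in $D$ is the same as the paper's argument, with compatibility with Zariski shrinking handled in the same way via \Cref{residue compatible restriction}.
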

\begin{proof}
Let $F_n(X\setminus D)\subseteq U\subseteq X\setminus D$ be a Zariski open, so $Z:=(X\setminus D)\setminus U$ is of codimension $>n$ in $X\setminus D$. Let $\bar Z$ be the closure of $Z$ in $X$. Then we compute that $U=X\setminus (D\cup\bar Z)=U'\setminus D'$, where $U':=X\setminus \bar Z$ and $D'=D\cap U'=D\setminus D\cap \bar Z$. Note that $D'\neq\emptyset$, else $D\subseteq\bar Z$, but this implies that either $\bar Z=X$, so $Z=X\setminus D$ which is not possible, or there exists an irreducible component $Z_0\subseteq Z$ such that $\bar Z_0=D$, but $Z_0\cap D\subseteq Z\cap D=\emptyset$. So using \cite[0A21]{Stacks}, we conclude that $D'$ is a smooth divisor of the smooth $U'$ with open complement $U$. So we have the residue map $\Het^i(U,\Lambda)\to\Het^{i-1}(D',\Lambda)$.

We will show that $D\setminus D'$ has codimension $>n$ in $D$ so that we have $\Het^{i-1}(D',\Lambda)\to \mrH^{i-1}(F_nD,\Lambda)$. As before, no irreducible component of $\bar Z$ is contained in $D$, so $\bar Z$ intersects $D$ transversally because $D$ is a divisor (\cite[begin \textsection 13.1.1]{EisHar} or \cite[above Lemma 9.9]{VoisII}) or $\bar Z\cap D=\emptyset$ in which case $D=D'$. This means that $\dim(\bar Z)-1=\dim(D\cap\bar Z)=\dim(D\setminus D')$ and thus $\dim(D)-\dim(D\setminus D')=\dim(D)-\dim(\bar Z)+1=\dim(X)-\dim(\bar Z)$. But $X\setminus D\subseteq X$ and $Z=\bar Z\cap (X\setminus D)\subseteq \bar Z$ are opens, so again by \cite[0A21]{Stacks} we have $\dim(X)-\dim(\bar Z)=\dim(X\setminus D)-\dim(Z)>n$ as wished.

As the residue map is compatible with restriction to opens, the above gives a well-defined map $\mrH^i(F_n(X\setminus D),\Lambda)\to\mrH^{i-1}(F_nD,\Lambda)$.

\end{proof}

Using the above Lemma and \Cref{refined-generic-point}, we can now consider the residue map $\mrH^i(F_n X_\eta,\Lambda)\cong\varinjlim_{U\subseteq B}\mrH^i(F_n\mathcal X_U,\Lambda)\to\bigoplus_{b\in B}\mrH^{i-1}(F_nX_b,\Lambda)$. Write again $Res_b\colon\mrH^i(F_nX_\eta,\Lambda)\to\mrH^{i-1}(F_nX_b,\Lambda)$ for the composition of this map with the projection to the $b$-th component. Using this, we can define the specialization map on $\Het^i(F_nX_\eta,\mu_{\ell^r})$ for every $n$.

\begin{cor}[{\cite[Lemma 4.6]{Sch2}}]\label{Specialization Refined}
For each $n$ there exists a map $sp_b\colon\Het^i(F_nX_\eta,\mu_{\ell^r})\to\Het^i(F_nX_b,\mu_{\ell^r})$ compatible with the filtration $F_\ast$.
\end{cor}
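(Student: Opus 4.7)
The plan is to define $sp_b\colon\mrH^i(F_nX_\eta,\mu_{\ell^r})\to\mrH^i(F_nX_b,\mu_{\ell^r})$ by the same formula $sp_b(\alpha):=-Res_b((\pi)\cup\alpha)$ used in \Cref{def spec map}, where $Res_b$ is now the refined residue map obtained by combining \Cref{Residue Refined} with \Cref{refined-generic-point}, and $\pi\in\eta^\ast$ is a uniformizer at $b$. The first preliminary step is to lift cup product with a global class to refined cohomology: for any $\beta\in\Het^j(X_\eta,\mu_{\ell^r})$, the operation $\alpha\mapsto\beta\cup\alpha$ is well-defined on $\mrH^\ast(F_nX_\eta,\mu_{\ell^r})$, sending a representative $\alpha_U\in\Het^i(U,\mu_{\ell^r})$ to $\beta|_U\cup\alpha_U\in\Het^{i+j}(U,\mu_{\ell^r})$; this is $F_\ast$-compatible because cup product commutes with restriction to smaller opens. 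Composing with $-Res_b$ then yields $sp_b$, automatically preserving the $F_\ast$ filtration since both constituent maps do.

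What remains is independence of the uniformizer. If $\pi'$ is another uniformizer then $u:=\pi/\pi'\in\Oc_{B,b}^\ast$, so on a Zariski open $W\ni b$ on which $u$ is invertible, $(\pi)-(\pi')=(u)\in\Het^1(\eta,\mu_{\ell^r})$ extends to a global class $\tilde u\in\Het^1(\mathcal X_W,\mu_{\ell^r})$. The key step is a refined version of \Cref{Specialization Along Liftable}: the identity
\[
Res_b(\tilde u|_{X_\eta}\cup\alpha)=0 \quad \text{in } \mrH^i(F_nX_b,\mu_{\ell^r})
\]
for every $\alpha\in\mrH^i(F_nX_\eta,\mu_{\ell^r})$. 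This is the main content beyond the unrefined case.

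To prove it, I would unwind \Cref{Residue Refined}: after shrinking $W$, represent $\alpha$ by $\alpha_V\in\Het^i(V,\mu_{\ell^r})$ on a suitable open $V\subseteq\mathcal X_W$ arranged (as in the proof of \Cref{Residue Refined}) so that $V\cap X_b$ is a smooth divisor of $V$ containing $F_nX_b$ and $V\cap X_\eta\supseteq F_nX_\eta$. Applying \Cref{cup-global-class} to the smooth pair $(V,V\cap X_b)$ together with the global class $\tilde u|_V\in\Het^1(V,\mu_{\ell^r})$ yields
\[
Res(\tilde u|_V\cup\alpha_V)=-\tilde u|_{V\cap X_b}\cup Res(\alpha_V).
\]
But $\tilde u|_{V\cap X_b}$ is pulled back from $\tilde u(b)\in\Het^1(\kappa(b),\mu_{\ell^r})=\kappa(b)^\ast/\kappa(b)^{\ast\ell^r}$, which vanishes since $\kappa(b)=k$ is algebraically closed; pushing this through the map to $\mrH^{i}(F_nX_b,\mu_{\ell^r})$ gives the desired vanishing. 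The obstacle is purely the geometric care needed to choose $V$ so that \Cref{cup-global-class} applies cleanly while remaining compatible with the refined filtration on both sides --- everything else is formal from the analogous unrefined construction.
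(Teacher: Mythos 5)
Your proposal is correct and takes the same route as the paper: define $sp_b$ on refined cohomology by the identical formula using the refined residue map of \Cref{Residue Refined} and \Cref{refined-generic-point}, then verify independence of the uniformizer via a refined analogue of \Cref{Specialization Along Liftable}. The paper leaves that verification to the reader ("Similar as before, one can check\dots"), whereas you carry it out explicitly (and correctly, up to a small conflation of $V$ with $V\setminus(V\cap X_b)$) by unwinding the smooth pair from \Cref{Residue Refined}, applying \Cref{cup-global-class}, and observing that the restriction of the extended unit class to $X_b$ is pulled back from $\Het^1(\kappa(b),\mu_{\ell^r})=0$ over an algebraically closed base field.
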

\begin{proof}
Let $\alpha\in\Het^i(F_nX_\eta,\mu_{\ell^r})$, then $\alpha=[\alpha_U]$ for some $\alpha_U\in\Het^i(U,\mu_{\ell^r})$ with $X_\eta\setminus U$ of codimension $>n$. We write $(\pi)\cup\alpha:=[(\pi)|_U\cup\alpha_U]\in\Het^{i+1}(F_nX_\eta)$ and set
\[
sp_b(\alpha):=-Res_b((\pi)\cup\alpha)\in \Het^i(F_nX_b,\Lambda),
\]
which is precisely \cite[Lemma 4.6]{Sch2}. Similar as before, one can check that it is still independent of chosen uniformizer.
\end{proof}

\subsubsection{Compatibility with Finite Extensions}
Let $\mathcal X\to B$ be a smooth family over a smooth curve $B$ and $f\colon B'\to B$ be a smooth finite morphism. Write also $f\colon\mathcal X'\to\mathcal X$ for the base change. Let $b'\in B'$ and $b=f(b')\in B$. Because we work over an algebraically closed field, $f$ induces an isomorphism $f_{b'}\colon X'_{b'}\isomto X_b$.

\begin{lemma}\label{Gysin Respects Uniformizer}
Suppose $f^{-1}(b)=\{b',b_1',\dots,b_n'\}$, then there exists a uniformizer $\pi'\in\Oc_{B',b'}$ that does not vanish at any of the other $b_i'$. Moreover, for such $\pi'$ we have that $f_\ast\pi'\in\Het^1(\eta,\mu_{\ell^r})$ represents a uniformizer of $b$.
\end{lemma}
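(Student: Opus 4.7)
The plan is to first construct $\pi'$ using the Chinese Remainder Theorem on the semi-local ring of $B'$ at the fibre over $b$, and then to identify $f_\ast\pi'$ with the field norm $N_{K(B')/K(B)}(\pi')$ and compute its valuation at $b$.

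For the construction of $\pi'$, I would observe that $\{b',b_1',\dots,b_n'\}$ are distinct closed points of the smooth curve $B'$, so the localization $R:=\Oc_{B',\{b',b_1',\dots,b_n'\}}$ is a semi-local PID whose maximal ideals $\mathfrak m_{b''}$ are pairwise comaximal. Fixing any uniformizer $\pi_0\in\Oc_{B',b'}$ and applying CRT in $R$, I obtain $\pi'\in R\subseteq\Oc_{B',b'}$ with $\pi'\equiv\pi_0\pmod{\mathfrak m_{b'}^2}$ and $\pi'\equiv 1\pmod{\mathfrak m_{b_i'}}$ for every $i=1,\dots,n$. By Nakayama $\pi'$ still generates $\mathfrak m_{b'}\Oc_{B',b'}$, so it is a uniformizer at $b'$, while $\pi'(b_i')=1\neq 0$ by construction.

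For the norm computation, \Cref{Compatibility Norm and Gysin} tells us that $f_\ast\colon\Het^1(\eta',\mu_{\ell^r})\to\Het^1(\eta,\mu_{\ell^r})$ is induced, via Kummer theory, by the field norm $N:=N_{K(B')/K(B)}\colon K(B')^\ast\to K(B)^\ast$. Since $f$ is smooth and finite between smooth curves it is étale, and since $k$ is algebraically closed the residue field extensions $\kappa(b'')/\kappa(b)$ at points above $b$ are all trivial. Using the place-wise decomposition $K(B')\otimes_{K(B)}K(B)_b\cong\prod_{b''\mid b}K(B')_{b''}$ together with the standard formula $v(N_{L_w/K_v}(x))=[\kappa_w:\kappa_v]\cdot w(x)$ for finite extensions of discretely valued fields, I would compute
\[
v_b(N(\pi'))=\sum_{b''\mid b}[\kappa(b''):\kappa(b)]\cdot v_{b''}(\pi')=v_{b'}(\pi')+\sum_{i=1}^n v_{b_i'}(\pi')=1,
\]
so $f_\ast\pi'\in K(B)^\ast/(K(B)^\ast)^{\ell^r}$ is represented by an element of valuation $1$ at $b$, i.e., by a uniformizer of $b$. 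The main point that demands care is the normalisation in this valuation formula for the norm; once the étale case with trivial residue field extensions is isolated, the computation reduces to bookkeeping of the ramification data, and the rest is just CRT plus Kummer theory.
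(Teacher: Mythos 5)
Your proof is correct, but it takes a different route from the paper's for both halves. For the construction of $\pi'$, the paper argues geometrically: it embeds $B'\hookrightarrow\P^N$ and picks hyperplanes $H_0,H_1$ meeting $B'$ transversally with $H_0\cap f^{-1}(b)=\emptyset$ and $H_1\cap f^{-1}(b)=\{b'\}$, so the equation of $H_1$ on $B'\cap(\P^N\setminus H_0)$ is the required uniformizer; your CRT argument on the semi-local PID $\Oc_{B',\{b',b_1',\dots,b_n'\}}$ reaches the same conclusion by pure commutative algebra and needs no projective embedding. For the norm statement, the paper does not pass through the field norm at all: it shrinks $U'$ so that $f^{-1}(b)\cap U'=\emptyset$ and invokes the residue--Gysin compatibility square from \cite[Lemma 2.3]{Sch2} to get $Res_b(f_\ast\pi')=\sum_{b''\in f^{-1}(b)}(f_{b''})_\ast Res_{b''}(\pi')=(f_{b'})_\ast(1)=1$. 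You instead use \Cref{Compatibility Norm and Gysin} to identify $f_\ast$ with the norm and then run the classical local--global valuation computation $v_b(N(\pi'))=\sum_{b''\mid b}f_{b''}\,v_{b''}(\pi')=1$. These are the same identity in two dialects (the residue--pushforward square is the Kummer-theoretic translation of the norm--valuation formula), but your version is more self-contained and gives the earlier norm lemma genuine work to do, whereas the paper recycles the cited compatibility from \cite{Sch2}. One small remark: the \'etaleness of $f$ plays no role in your valuation formula, since $v(N_{L_w/K_v}(x))=f_w\cdot w(x)$ holds for any finite extension of DVRs, and the residue degrees collapse to $1$ purely because $k$ is algebraically closed.
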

\begin{proof}
Embed $B'\subseteq\P^N$ in some projective space. Let $H_0,H_1$ be hyperplanes of $\P^N$, intersecting $B$ transversally and so that $H_0\cap f^{-1}(b)=\emptyset$ and $H_1\cap f^{-1}(b)=\{b'\}$. Then inside $\P^N\setminus H_0\cong\A^N$, the hyperplane $H_1$ defines a function that vanishes at $b'$ but not at any of the other $b_i'$.

Suppose this $\pi'$ is defined on some $U'\subseteq B'$. Let $U:=B\setminus f(B'\setminus U')$ and shrink $U'$ to be $f^{-1}(U)$. Note that now $b\notin U$ as $b'\notin U'$ and by \cite[Lemma 2.3]{Sch2} we have a commutative diagram
\[
\begin{tikzcd}
\Het^1(U')\ar[r,"Res"]\ar[d,"f_\ast"]&\bigoplus_{b'\in B'\setminus U'}\Het^0(b')\ar[d,"f_\ast"]\\
\Het^1(U)\ar[r,"Res"]&\bigoplus_{b\in B\setminus U}\Het^0(b)
\end{tikzcd}.
\]
So we see that $Res_b(f_\ast\pi')=\sum_{b'\in f^{-1}(b)}f_\ast Res_{b'}(\pi')=f_\ast Res_{b'}(\pi')$ by construction of $\pi'$. Because $\pi'$ is a uniformizer at $b'$, we conclude that $f_\ast\pi'$ represents a uniformizer at $b$.
\end{proof}

\begin{prop}\label{compatibility-sp-fin-ext}
The diagram
\[
\begin{tikzcd}
\Het^i(X_{\eta'},\mu_\ell)\ar[r,"sp_{b'}"]&\Het^i(X'_{b'},\mu_\ell)\ar[d,"(f_{b'})_\ast"]\\
\Het^i(X_{\eta},\mu_\ell)\ar[u,"f^\ast"]\ar[r,"sp_b"]&\Het^i(X_b,\mu_\ell)
\end{tikzcd}
\]
commutes, that is, $(f_{b'})_\ast sp_{b'}(f^\ast\alpha)=sp_b(\alpha)$.
\end{prop}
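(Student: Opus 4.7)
The plan is to unwind both specialization maps, use the projection formula to move $f_\ast$ past the cup product and past the residue, and then handle the multiple points in $f^{-1}(b)$ by a careful choice of uniformizer.

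More precisely, I would first invoke \Cref{Gysin Respects Uniformizer} to pick a uniformizer $\pi'\in\Oc_{B',b'}$ that is a \emph{unit} at every other point $b_i'\in f^{-1}(b)\setminus\{b'\}$, so that $\pi:=f_\ast\pi'\in\Het^1(\eta,\mu_\ell)$ represents a uniformizer at $b$. Then both specialization maps can be computed with these choices, namely $sp_b(\alpha)=-Res_b((\pi)\cup\alpha)$ and $sp_{b'}(f^\ast\alpha)=-Res_{b'}((\pi')\cup f^\ast\alpha)$, where $(\pi)$ and $(\pi')$ denote pullbacks to $\mathcal X$ and $\mathcal X'$ respectively. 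The projection formula for the finite map $f\colon\mathcal X'\to\mathcal X$ gives
\[
f_\ast\bigl((\pi')\cup f^\ast\alpha\bigr)=(f_\ast\pi')\cup\alpha=(\pi)\cup\alpha,
\]
where I am using that pullback of $\pi'$ from $B'$ to $\mathcal X'$ commutes with base change, so that $f_\ast$ of its pullback is the pullback of $f_\ast\pi'$.

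Next I would apply $Res_b$ to both sides. The compatibility of the residue with the Gysin pushforward, as in the square of \Cref{Gysin Respects Uniformizer} (applied in the family version to $f\colon\mathcal X'\to\mathcal X$), gives
\[
Res_b\bigl(f_\ast((\pi')\cup f^\ast\alpha)\bigr)=\sum_{b''\in f^{-1}(b)}(f_{b''})_\ast Res_{b''}\bigl((\pi')\cup f^\ast\alpha\bigr).
\]
For the contributions at $b''=b_i'\neq b'$, the uniformizer $\pi'$ has trivial residue at $b_i'$ because it is a unit there, so \Cref{Specialization Along Liftable} forces $Res_{b_i'}((\pi')\cup f^\ast\alpha)=0$. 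Hence only the $b'$-summand survives, and
\[
Res_b\bigl((\pi)\cup\alpha\bigr)=(f_{b'})_\ast Res_{b'}\bigl((\pi')\cup f^\ast\alpha\bigr),
\]
which after multiplying by $-1$ is exactly the claimed equality $sp_b(\alpha)=(f_{b'})_\ast sp_{b'}(f^\ast\alpha)$.

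The main obstacle is the choice of a uniformizer that is a unit at the other points in the fiber, since without this control the other summands coming from the residue--pushforward compatibility would in principle contribute; this is precisely what \Cref{Gysin Respects Uniformizer} was set up to handle, and together with \Cref{Specialization Along Liftable} it makes the argument go through. A minor technical point to verify is that the projection formula used above (and the residue--pushforward compatibility at the level of the total spaces $\mathcal X,\mathcal X'$, not just of the bases $B,B'$) is available in étale cohomology for the finite map $f$; both are standard, so the proof reduces to the bookkeeping described above.
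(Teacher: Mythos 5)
Your proposal is correct and follows essentially the same route as the paper: choose a uniformizer $\pi'$ via \Cref{Gysin Respects Uniformizer}, apply the projection formula to move $f_\ast$ past the cup product, use compatibility of residue with pushforward, and kill the other summands in $f^{-1}(b)$ via \Cref{Specialization Along Liftable}. The one step you flag as a ``minor technical point'' --- that pushing forward the pullback of $\pi'$ from $B'$ to $\mathcal X'$ agrees with pulling back $f_\ast\pi'$ to $\mathcal X$ --- is exactly what \Cref{Compatibility Norm and Gysin} was set up to provide, so the paper supplies this ingredient explicitly rather than treating it as standard.
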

\begin{proof}
Using \cite[Lemma 2.3]{Sch2} we have a commutative diagram
\[
\begin{tikzcd}
\Het^{i+1}(\mathcal X')\ar[r]\ar[d,"f_\ast"]&\Het^{i+1}(X_{\eta'})\ar[r,"Res"]\ar[d,"f_\ast"]&\bigoplus_{b'\in B'}\Het^i(X'_{b'})\ar[d,"(\sum_{b'\in f^{-1}(b)}(f_{b'})_\ast)_b"]\\
\Het^{i+1}(\mathcal X)\ar[r]&\Het^{i+1}(X_\eta)\ar[r,"Res"]&\bigoplus_{b\in B}\Het^i(X_{b})
\end{tikzcd},
\]
where the middle vertical map must be viewed as the direct limit of push forwards.

Let $\pi'$ be a uniformizer at $b'$  with the property from \Cref{Gysin Respects Uniformizer}, so that $f_\ast\pi'=:\pi$ is a uniformizer at $b$. Then we compute
\[
-Res_b(f_\ast((\pi')\cup f^\ast\alpha))=-Res_b((\pi)\cup\alpha)=sp_b(\alpha),
\]
using the projection formula and the second part of \Cref{Gysin Respects Uniformizer}. And on the other hand
\[
-Res_b(f_\ast((\pi')\cup f^\ast\alpha))=-\sum_{b'\in f^{-1}(b)}(f_{b'})_\ast Res_{b'}((\pi')\cup f^\ast\alpha)=(f_{b'})_\ast sp_{b'}(f^\ast\alpha),
\]
where in the last step we used the property of $\pi'$ and \Cref{Specialization Along Liftable} again.
\end{proof}

\begin{remark}
Note that \Cref{compatibility-sp-fin-ext} also implies \cite[Lemma 4.5]{Sch2} as if $\alpha,\tilde\alpha\in\Het^i(X_\eta,\mu_{\ell^r})$ so that $\alpha=\tilde\alpha\in\Het^i(X_{\bar\eta},\mu_{\ell^r})=\varinjlim_{\eta'\to\eta}\Het^i(X_{\eta'},\mu_{\ell^r})$, then there exists a finite extension $\eta'\to\eta$ so that $\alpha=\tilde\alpha\in\Het^i(X_{\eta'},\mu_{\ell^r})$. Let $b'\mapsto b$, then we compute $sp_b(\alpha)=(f_{b'})_\ast sp_{b'}(\alpha)=(f_{b'})_\ast sp_{b'}(\tilde\alpha)=sp_b(\tilde\alpha)$.
\end{remark}

\section{Comparing the Specialization and Cospecialization Map}\label{comparing sp and cosp}
The goal of this section is to give a relation between the cospecialization map as defined in \cite{FK} and the specialization map from \Cref{def spec map}. We start by recalling the cospecialization map.

\subsection{The Cospecialization map}\label{cospecialization map}
The following definition is a special case of the construction given on \cite[page 95]{FK}.
\begin{definition}
Let $\mathcal F$ be a sheaf on an irreducible variety $X$ with generic point $\eta$. For every $x\in X$, there is a cospecialization map
\[
cosp\colon\mathcal F_{\bar x}\to\mathcal F_{\bar\eta},
\]
defined by the inclusion of directed sets $\{(U,u)\overset{\e t}{\to}(X,x)\}\to\{U\overset{\e t}{\to}X\}$.
\end{definition}

If $f\colon X\to S$ is a compactifiable morphism and taking $\mathcal F=R^i f_!\Lambda$, then the base change isomorphism gives a homomorphism
\[
cosp\colon\mrH_c^i(X_{\bar s},\Lambda)\to\mrH_c^i(X_{\bar\eta},\Lambda).
\]
If $f$ is proper, we obtain
\[
cosp\colon\Het^i(X_{\bar s},\Lambda)\to\Het^i(X_{\bar\eta},\Lambda).
\]

The following property of the cospecialization map follows directly from the definition. We state it for reference purposes.
\begin{prop}\label{functorial-cosp}
Given a morphism of sheaves $\varphi\colon\mathcal F\to\mathcal G$ on $X$, then for every $x\in X$ the following diagram commutes
\[
\begin{tikzcd}
\mathcal F_{\bar x}\ar[r,"cosp"]\ar[d,"\varphi_{\bar x}"]&\mathcal F_{\bar\eta}\ar[d,"\varphi_{\bar\eta}"]\\
\mathcal G_{\bar x}\ar[r,"cosp"]&\mathcal G_{\bar\eta}
\end{tikzcd}.
\]
\end{prop}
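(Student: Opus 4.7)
The statement is a naturality assertion for the cospecialization map, and the proof should be purely formal, inherited directly from the description of $cosp$ as the colimit of an identity map.

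The plan is to unwind the definition of both stalks as filtered colimits over étale neighborhoods. Concretely, $\mathcal{F}_{\bar{x}} = \varinjlim_{(U,u) \to (X,x)} \mathcal{F}(U)$ with the colimit over étale neighborhoods of $\bar{x}$, and (since $X$ is irreducible with generic point $\eta$) $\mathcal{F}_{\bar{\eta}} = \varinjlim_{U \to X} \mathcal{F}(U)$ over connected étale $U \to X$. The cospecialization map is, by the very definition recalled above, induced by the forgetful map of index categories that sends $(U,u) \to (X,x)$ to $U \to X$; on representatives it sends the class of $s \in \mathcal{F}(U)$ in the first colimit to the class of the same section $s$ in the second colimit.

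Next I would observe that a morphism of sheaves $\varphi\colon \mathcal{F} \to \mathcal{G}$ consists of maps $\varphi_U\colon \mathcal{F}(U) \to \mathcal{G}(U)$ for every étale $U \to X$, compatible with restriction; both stalk maps $\varphi_{\bar{x}}$ and $\varphi_{\bar{\eta}}$ are obtained by passing to the respective filtered colimits of the system $\{\varphi_U\}_U$. Hence for any class in $\mathcal{F}_{\bar{x}}$ represented by $s \in \mathcal{F}(U)$ for an étale neighborhood $(U,u) \to (X,x)$, we have
\[
\varphi_{\bar{\eta}}(cosp([s])) = \varphi_{\bar{\eta}}([s]) = [\varphi_U(s)] = cosp([\varphi_U(s)]) = cosp(\varphi_{\bar{x}}([s])),
\]
where every bracket denotes the class in the appropriate colimit and each equality is a direct consequence of how colimits interact with natural transformations between the corresponding diagrams.

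There is really no obstacle here: the statement is the formal fact that a natural transformation between two functors into sets/abelian groups commutes with any map of colimits induced by a functor between the indexing categories. I would therefore present the argument in one short paragraph, perhaps just remarking that the diagram commutes because both $cosp$ maps are induced by the same reindexing functor applied to the two natural transformations $\varphi_U$, and naturality of $\varphi$ in $U$ makes the diagram commute term-by-term before passing to the colimit.
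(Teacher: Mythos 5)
Your argument is correct and is exactly the formal unwinding that the paper has in mind — the paper gives no proof at all, merely noting that the statement "follows directly from the definition," and your one-paragraph spelling-out of the reindexing-functor-plus-naturality mechanism is the right way to make that precise.
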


\begin{cor}\label{constant trace}
Let $f\colon X\to S$ be a morphism satisfying $(\ast)_d$ (cf. \Cref{verdier duality}) and let $U\to S$ be an étale neighborhood where $U$ is irreducible with generic point $\eta$. Then for $\tilde\alpha\in\Gamma(U,R^{2d}f_!\Lambda)$ and $s\in U$, we have $\int_{X_{\bar\eta}}\tilde\alpha_{\bar\eta}=cosp\int_{X_{\bar s}}\tilde\alpha_{\bar s}$.
\end{cor}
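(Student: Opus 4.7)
The plan is to obtain the identity $\int_{X_{\bar\eta}}\tilde\alpha_{\bar\eta}=cosp\int_{X_{\bar s}}\tilde\alpha_{\bar s}$ by applying the functoriality of cospecialization (\Cref{functorial-cosp}) to the trace \emph{morphism of sheaves} $\int_{X/S}\colon R^{2d}f_!\Lambda\to\Lambda$ on $S$, and then reading off the stalks. The point is that nothing in the corollary requires new input: it is a formal consequence of two facts already collected in the paper, namely that the stalk of $\int_{X/S}$ at any geometric point is the fibrewise trace, and that cospecialization is natural in morphisms of sheaves.

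More concretely, first I would restrict $\int_{X/S}$ to the étale neighborhood $U$; since $U$ is irreducible with generic point $\eta$, the cospecialization map $\mathcal F_{\bar s}\to\mathcal F_{\bar\eta}$ is defined for any sheaf $\mathcal F$ on $U$. Applying \Cref{functorial-cosp} to this morphism yields the commutative square
\[
\begin{tikzcd}
(R^{2d}f_!\Lambda)_{\bar s}\ar[r,"cosp"]\ar[d,"(\int_{X/S})_{\bar s}"]&(R^{2d}f_!\Lambda)_{\bar\eta}\ar[d,"(\int_{X/S})_{\bar\eta}"]\\
\Lambda_{\bar s}\ar[r,"cosp"]&\Lambda_{\bar\eta}
\end{tikzcd}.
\]
By the base change compatibility of the trace recalled in \Cref{verdier duality}, the left vertical arrow is the fibrewise trace $\int_{X_{\bar s}}$ and the right vertical arrow is $\int_{X_{\bar\eta}}$. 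Note moreover that the bottom horizontal arrow is the cospecialization on the constant sheaf $\Lambda$; since the sections of $\Lambda$ on any connected étale neighborhood are canonically $\Lambda$, this cospecialization is just the identity on $\Lambda$.

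Finally, directly from the definition of cospecialization as induced by the inclusion of directed systems of étale neighborhoods, any section $\tilde\alpha\in\Gamma(U,R^{2d}f_!\Lambda)$ satisfies $cosp(\tilde\alpha_{\bar s})=\tilde\alpha_{\bar\eta}$, since both stalks are represented by the same section $\tilde\alpha$ over the étale neighborhood $U$ of both $s$ and $\eta$. Chasing the diagram then gives
\[
\int_{X_{\bar\eta}}\tilde\alpha_{\bar\eta}=\int_{X_{\bar\eta}}cosp(\tilde\alpha_{\bar s})=cosp\int_{X_{\bar s}}\tilde\alpha_{\bar s},
\]
which is the desired equality. There is no real obstacle here: the only substantive content is the observation that the trace map really is a morphism of sheaves (so that \Cref{functorial-cosp} applies), and this is precisely what was recorded in \Cref{verdier duality}.
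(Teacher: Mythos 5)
Your proof is correct and follows essentially the same route as the paper's: apply the naturality of cospecialization (\Cref{functorial-cosp}) to the trace morphism of sheaves $\int_{X/S}$, identify its stalks with the fibrewise traces via base change compatibility, and use the tautological identity $cosp(\tilde\alpha_{\bar s})=\tilde\alpha_{\bar\eta}$ for a section defined on the common neighborhood $U$. The only difference is that you spell out the commutative square and the triviality of cospecialization on the constant sheaf, which the paper leaves implicit.
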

\begin{proof}
This follows because the trace map is compatible with base change and by definition we see that $cosp(\tilde\alpha_{\bar s})=\tilde\alpha_{\bar\eta}$, so it follows from \Cref{functorial-cosp}.
\end{proof}

\begin{cor}\label{relation cosp}
Let $j\colon U\hookrightarrow X$ be an open and $f\colon X\to S$ a compactifiable morphism. Then the diagram
\[
\begin{tikzcd}
\mrH_c^i(U_{\bar s},\Lambda)\ar[r,"cosp"]\ar[d]&\mrH_c^i(U_{\bar\eta},\Lambda)\ar[d]\\
\mrH_c^i(X_{\bar s},\Lambda)\ar[r,"cosp"]&\mrH_c^i(X_{\bar\eta},\Lambda)
\end{tikzcd}
\]
commutes.
\end{cor}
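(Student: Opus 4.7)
The plan is to realize both vertical maps as stalks of a single morphism of étale sheaves on $S$ and then invoke the functoriality of cospecialization (\Cref{functorial-cosp}).

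First I would observe that the open immersion $j\colon U\hookrightarrow X$ gives the standard adjunction morphism $j_!\Lambda\to\Lambda$ of sheaves on $X$. Applying $Rf_!$ and using $R(f\circ j)_!=Rf_!\circ Rj_!=Rf_!\circ j_!$ (since $j_!$ is exact for open immersions), this produces a morphism of complexes $R(f\circ j)_!\Lambda\to Rf_!\Lambda$ in $D(S,\Lambda)$, and on cohomology a morphism of sheaves
\[
\varphi\colon R^i(f\circ j)_!\Lambda\lto R^if_!\Lambda
\]
on $S$. The key claim is that for any geometric point $\bar t\to S$ (in particular $\bar s$ or $\bar\eta$), the stalk $\varphi_{\bar t}$ is canonically identified, via the proper base change isomorphism, with the natural map $\mrH_c^i(U_{\bar t},\Lambda)\to\mrH_c^i(X_{\bar t},\Lambda)$ induced by the open inclusion $j_{\bar t}\colon U_{\bar t}\hookrightarrow X_{\bar t}$. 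This is the only thing that genuinely needs checking, and it follows from the compatibility of the base change isomorphism with the adjunction $j_!\to\id$; I expect this to be the main (routine but notationally heavy) obstacle.

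Once this identification is granted, the conclusion is immediate. Applying \Cref{functorial-cosp} to the morphism $\varphi$ at the point $s\in S$ yields a commuting square
\[
\begin{tikzcd}
(R^i(f\circ j)_!\Lambda)_{\bar s}\ar[r,"cosp"]\ar[d,"\varphi_{\bar s}"]&(R^i(f\circ j)_!\Lambda)_{\bar\eta}\ar[d,"\varphi_{\bar\eta}"]\\
(R^if_!\Lambda)_{\bar s}\ar[r,"cosp"]&(R^if_!\Lambda)_{\bar\eta}
\end{tikzcd}.
\]
Under the base change identifications above, this is exactly the square in the statement, so the corollary follows.
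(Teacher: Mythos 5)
Your proof is correct and follows exactly the paper's approach: realize the vertical maps as the stalks of the morphism of sheaves $R^i(f\circ j)_!\Lambda = R^if_!j_!j^\ast\Lambda\to R^if_!\Lambda$ induced by the adjunction counit $j_!j^\ast\to\id$, then apply the functoriality of cospecialization (\Cref{functorial-cosp}). The compatibility of the base change isomorphism with this counit, which you flag as the only step needing care, is indeed the standard (and routine) ingredient the paper implicitly uses as well.
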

\begin{proof}
The vertical morphisms are defined by taking the stalks of the cohomology of the composition $R^i(f|_U)_!\Lambda=R^if_!j_!j^\ast\Lambda\overset{j_!j^\ast\to\id}{\to}R^if_!\Lambda$. The result now follows from \Cref{functorial-cosp}.
\end{proof}

\begin{prop}\label{cosp-base-point-indep}
Let $f\colon\mathcal X\to B$ be a smooth family over a curve and $\pi\colon B'\to B$ a finite map. Let $b'\in B'$ be a closed point lying over $b\in B$. Then the following diagram commutes
\[
\begin{tikzcd}
\mrH_c^i(X_b,\Lambda)\ar[r,"cosp"]\ar[d,"\pi_{b'}^\ast"]&\mrH_c^i(X_{\bar\eta},\Lambda)\ar[d]\\
\mrH_c^i(X'_{b'},\Lambda)\ar[r,"cosp"]&\mrH_c^i(X_{\bar\eta'},\Lambda)
\end{tikzcd}.
\]
\end{prop}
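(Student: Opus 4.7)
The plan is to reduce to the functoriality of cospecialization (\Cref{functorial-cosp}) via proper base change. Form the Cartesian square
\[
\begin{tikzcd}
\mathcal X'\ar[r,"\pi"]\ar[d,"f'"']&\mathcal X\ar[d,"f"]\\
B'\ar[r,"\pi"]&B
\end{tikzcd}
\]
and let $\theta\colon\pi^*R^if_!\Lambda\isomto R^if'_!\Lambda$ be the canonical base change isomorphism \cite[Chapitre XVII, Théorème 5.2.6]{SGA4}. Pick a geometric point $\bar{\eta'}$ over $\eta'$ and set $\bar\eta:=\pi(\bar{\eta'})$; since $k$ is algebraically closed, the closed points $b$ and $b'$ are themselves geometric points with $\pi(b')=b$.

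Applying \Cref{functorial-cosp} to $\theta$ at the geometric points $b'$ and $\bar{\eta'}$ of $B'$, and using the standard stalk identification $(\pi^*\mathcal F)_{\bar{x'}}=\mathcal F_{\pi(\bar{x'})}$, yields a commuting square
\[
\begin{tikzcd}
(R^if_!\Lambda)_{b}\ar[r,"cosp"]\ar[d,"\theta_{b'}"']&(R^if_!\Lambda)_{\bar\eta}\ar[d,"\theta_{\bar{\eta'}}"]\\
(R^if'_!\Lambda)_{b'}\ar[r,"cosp"]&(R^if'_!\Lambda)_{\bar{\eta'}}
\end{tikzcd}
\]
Proper base change identifies each stalk with the compactly supported cohomology of the corresponding geometric fibre, turning this into precisely the square in the statement: the horizontal arrows become the two cospecialization maps, while $\theta_{b'}$ and $\theta_{\bar{\eta'}}$ become $\pi_{b'}^\ast$ and the pullback along the natural morphism $X_{\bar{\eta'}}=X'_{\bar{\eta'}}\to X_{\bar\eta}$, respectively.

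The technical crux is this final identification: that under the proper base change identification of stalks with cohomology of geometric fibres, $\theta_{b'}$ really corresponds to $\pi_{b'}^\ast$ (and similarly for $\theta_{\bar{\eta'}}$). This is a standard compatibility between two instances of base change---the sheaf-level isomorphism $\theta$ and the proper base change computing stalks---and follows from the naturality of the base change morphism for $Rf_!$ in the base: both maps arise functorially from the Cartesian square $X'_{b'}\to X_b$ over $b'\to b$ and hence coincide.
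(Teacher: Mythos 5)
Your approach is essentially the same as the paper's: decompose through the base-change isomorphism $\theta\colon\pi^\ast R^if_!\Lambda\isomto R^if'_!\Lambda$ on $B'$, and apply \Cref{functorial-cosp} to $\theta$ to get the compatibility with $cosp$. However, there is a step you treat as automatic that is actually a separate verification. When you pass from the square produced by \Cref{functorial-cosp}, whose top row is $cosp$ for the sheaf $\pi^\ast R^if_!\Lambda$ over $B'$ from $(\pi^\ast R^if_!\Lambda)_{b'}$ to $(\pi^\ast R^if_!\Lambda)_{\bar\eta'}$, to the square whose top row is $cosp$ for $R^if_!\Lambda$ over $B$ from $(R^if_!\Lambda)_b$ to $(R^if_!\Lambda)_{\bar\eta}$, you invoke only the stalk identification $(\pi^\ast\mathcal F)_{\bar x'}\cong\mathcal F_{\pi(\bar x')}$. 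That isomorphism of groups is not by itself the claim you need: the two cospecialization maps are built from directed systems of \'etale neighborhoods over two different bases ($B'$ and $B$), and you must verify that they agree under the stalk isomorphism. This is exactly what the paper isolates as the middle column and ``left square'' in its three-column diagram
\[
(R^if_!\Lambda)_b\;\longrightarrow\;(\pi^\ast R^if_!\Lambda)_{b'}\;\longrightarrow\;(R^if'_!\Lambda)_{b'}
\]
and explicitly flags with ``one can check directly that the left square commutes.'' The check is not hard (pullbacks along $\pi$ of \'etale neighborhoods $(U,u)\to(B,b)$ give \'etale neighborhoods of $b'$ compatibly with sections), but it is a genuine compatibility to state, not a consequence of the stalk identification being an isomorphism. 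Your final paragraph about the horizontal maps being pullbacks matches the paper's closing remark and is fine.
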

\begin{proof}
Consider the diagram
\[
\begin{tikzcd}
\mrH_c^i(X_b,\Lambda)=(R^if_!\Lambda)_b\ar[r,"\cong"]\ar[d,"cosp"]&(\pi^\ast R^if_!\Lambda)_{b'}\ar[r,"\cong"]\ar[d,"cosp"]&(R^if'_!\Lambda)_{b'}\ar[d,"cosp"]=\mrH_c^i(X'_{b'},\Lambda)\\
\mrH_c^i(X_{\bar\eta},\Lambda)=(R^if_!\Lambda)_{\bar\eta}\ar[r,"\cong"]&(\pi^\ast R^if_!\Lambda)_{\bar\eta'}\ar[r,"\cong"]&(R^if'_!\Lambda)_{\bar\eta'}=\mrH_c^i(X_{\bar\eta'},\Lambda)
\end{tikzcd},
\]
where the right horizontal maps are the base-change isomorphisms. So the right square commutes by \Cref{functorial-cosp}. One can check directly that the left square commutes.

So we are left to check that the horizontal maps are the pullback maps on cohomology. This follows from formal compatibilities between the base-change isomorphisms. 
\end{proof}

Let us from now on assume that $\Lambda$ is finite of order prime to the characteristic of the base field $k$.
\begin{definition}\label{def dual cosp}
Let $f\colon\mathcal X\to B$ be a smooth family over a curve. We define 
\[
cosp^\vee\colon\Het^i(X_{\bar\eta},\Lambda)\to\Het^i(X_b,\Lambda),
\]
to be the composition of $(\mrH_c^{2d-i}(X_b,\Lambda)\overset{cosp}{\to}\mrH_c^{2d-i}(X_{\bar\eta},\Lambda))^\vee$ and the Poincaré Duality isomorphisms, where we use the identification
\[
\begin{tikzcd}
\mrH^{2d}_c(X_b,\Lambda)\ar[r,"cosp"]\ar[d,"\int_{X_b}"]&\mrH^{2d}_c(X_{\bar\eta},\Lambda)\ar[d,"\int_{X_{\bar\eta}}"]\\
\Lambda_b\ar[r,"cosp"]&\Lambda_{\bar\eta}
\end{tikzcd}.
\]
So $cosp^\vee(\alpha)$ is characterized by $cosp\int_{X_b}\beta\cup cosp^\vee(\alpha)=\int_{X_{\bar\eta}}cosp(\beta)\cup\alpha$ for every $\beta\in \mrH^{2d-i}_c(X,\Lambda)$.
\end{definition}

The following is also a direct consequence from \Cref{cosp-base-point-indep}.
\begin{cor}\label{dualcosp-base-point-indep}
The same setup as \Cref{cosp-base-point-indep}. The following diagram commutes
\[
\begin{tikzcd}
\Het^i(X_{\bar\eta'},\Lambda)\ar[r,"cosp^\vee"]\ar[d]&\Het^i(X_{b'},\Lambda)\ar[d,"(\pi_{b'})_\ast"]\\
\Het^i(X_{\bar\eta},\Lambda)\ar[r,"cosp^\vee"]&\Het^i(X_b,\Lambda)
\end{tikzcd}.
\]
\end{cor}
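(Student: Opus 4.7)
The strategy is to dualize \Cref{cosp-base-point-indep} by exploiting that the Gysin maps in question are Poincaré duals of pullbacks and that $cosp^\vee$ is, by construction, characterized by a trace pairing against $cosp$. First I would identify the unlabelled left vertical arrow as the Gysin map $\pi_\ast$ induced by the restriction of the finite map $\pi\colon \mathcal X'\to\mathcal X$ to the generic fibres, and recall that $(\pi_{b'})_\ast$ is the Gysin of the isomorphism $X'_{b'}\isomto X_b$; by \Cref{Gysin map} both are the Poincaré duals of the corresponding pullbacks.

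Given $\alpha\in\Het^i(X_{\bar\eta'},\Lambda)$, I would then test the desired identity $(\pi_{b'})_\ast cosp^\vee(\alpha)=cosp^\vee(\pi_\ast\alpha)$ by pairing both sides against an arbitrary class $\gamma\in\mrH_c^{2d-i}(X_b,\Lambda)$ under the Poincaré pairing on $X_b$, which suffices by non-degeneracy. The projection formula on the special fibre gives
\[
\int_{X_b}\gamma\cup(\pi_{b'})_\ast cosp^\vee(\alpha)=\int_{X'_{b'}}\pi_{b'}^\ast\gamma\cup cosp^\vee(\alpha),
\]
and the defining formula of $cosp^\vee$ upstairs (after using \Cref{constant trace} to slide $cosp$ past the trace) rewrites this as $\int_{X_{\bar\eta'}}cosp(\pi_{b'}^\ast\gamma)\cup\alpha$. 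On the other side, the defining formula of $cosp^\vee$ downstairs combined with the projection formula on the generic fibre yields
\[
\int_{X_b}\gamma\cup cosp^\vee(\pi_\ast\alpha)=\int_{X_{\bar\eta}}cosp(\gamma)\cup\pi_\ast\alpha=\int_{X_{\bar\eta'}}\pi^\ast cosp(\gamma)\cup\alpha,
\]
modulo the canonical identification of $\Lambda$-stalks under $cosp$.

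Thus the corollary reduces to the pullback-compatibility $cosp(\pi_{b'}^\ast\gamma)=\pi^\ast cosp(\gamma)$, which is exactly the content of \Cref{cosp-base-point-indep} applied to $\gamma$. The only point requiring care is tracking the cospecialization on the coefficient stalks while commuting $cosp$ past $\int_{X_b}$ and $\int_{X_{\bar\eta}}$; this is precisely handled by \Cref{constant trace} and constitutes the sole bookkeeping subtlety, not a substantive obstacle.
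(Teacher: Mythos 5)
Your argument is correct and is the natural unpacking of the paper's terse observation that the corollary is a direct consequence of \Cref{cosp-base-point-indep}: pairing against arbitrary compactly-supported classes, applying the projection formula, and invoking the defining pairing characterization of $cosp^\vee$ (with \Cref{constant trace} handling the identification of $\Lambda$-stalks) reduces the diagram to the one in \Cref{cosp-base-point-indep} exactly as you describe. Your reading of the unlabelled left vertical arrow as the Gysin/pushforward along the canonical isomorphism $X_{\bar\eta'}\isomto X_{\bar\eta}$ is also the intended one, so the proposal and the paper take essentially the same route.
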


\subsection{General Comparison}
In this subsection we study under which conditions the specialization map from \Cref{def spec map} and the dual of the cospecialization map \Cref{def dual cosp} compare. 

We again let $\mathcal X\to B$ be a smooth family over a smooth curve $B$ with generic point $\eta$. We also let $B'\to B$ be a finite map from a smooth curve $B'$ with generic point $\eta'$. Write $\mathcal X':=\mathcal X\times_BB'$ for the base change. We start with a technical lemma.
\begin{lemma}\label{Cohomology Decomposition}
Let $b\in B$ be a closed point and suppose $\Het^i(\mathcal X,\Lambda)\to\Het^i(X_b,\Lambda)$ is surjective. Let $b'\in B'$ lying over $b$, then any $\beta'\in\Het^{i+1}(X_{\eta'},\mu_\ell)$ can be written as $\beta'=\tilde\beta'|_{X_{\eta'}}-(\pi_{b'})\cup\beta_0'$, for some $\tilde\beta'\in\Het^{i+1}(\mathcal X'_{U'},\Lambda)$ with $b'\in U'\subseteq B'$, $\beta_0'\in\Het^i(X_{\eta'},\Lambda)$ and $\pi_{b'}$ a uniformizer at $b'$.
\end{lemma}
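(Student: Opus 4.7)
The strategy is to eliminate the residue of $\beta'$ at $b'$ by subtracting a cup product with the uniformizer class $(\pi_{b'})$, and then use exactness of a localization sequence to lift the residue-free class to an open neighbourhood of $b'$.

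First I would show that the hypothesis that $\Het^i(\mathcal X,\Lambda)\to\Het^i(X_b,\Lambda)$ is surjective upgrades to surjectivity of $\Het^i(\mathcal X',\Lambda)\to\Het^i(X'_{b'},\Lambda)$. Since $k=\bar k$, the fibre map $X'_{b'}\to X_b$ is an isomorphism, and the composition
\[
\Het^i(\mathcal X,\Lambda)\to\Het^i(\mathcal X',\Lambda)\to\Het^i(X'_{b'},\Lambda)
\]
is identified, via this isomorphism, with the given surjection. Hence I may lift $-Res_{b'}(\beta')\in\Het^i(X'_{b'},\Lambda)$ to some $\tilde\beta_0\in\Het^i(\mathcal X',\Lambda)$ and set $\beta_0':=\tilde\beta_0|_{X_{\eta'}}\in\Het^i(X_{\eta'},\Lambda)$.

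Next I would compute $Res_{b'}(\beta'+(\pi_{b'})\cup\beta_0')$. Shrinking to a Zariski open $U'\subseteq B'$ containing $b'$ so that $\tilde\beta_0$ restricts to $\mathcal X'_{U'}$ and $\pi_{b'}$ is regular and invertible on $U'\setminus\{b'\}$, \Cref{cup-global-class} applied to the smooth pair $(\mathcal X'_{U'},X'_{b'})$, with $(\pi_{b'})$ playing the role of the non-global class and $\tilde\beta_0|_{\mathcal X'_{U'}}$ that of the global class, gives together with \Cref{residue compatible restriction}
\[
Res_{b'}\bigl((\pi_{b'})\cup\beta_0'\bigr)=Res_{b'}\bigl((\pi_{b'})\bigr)\cup\tilde\beta_0|_{X'_{b'}}=\tilde\beta_0|_{X'_{b'}}=-Res_{b'}(\beta'),
\]
using that $Res_{b'}((\pi_{b'}))=1\in\Het^0(X'_{b'},\Lambda)$ since $\pi_{b'}$ is a uniformizer. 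Therefore $Res_{b'}(\beta'+(\pi_{b'})\cup\beta_0')=0$.

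To conclude, I would invoke the localization sequence of the smooth pair $(\mathcal X'_{U'},X'_{b'})$,
\[
\Het^{i+1}(\mathcal X'_{U'},\Lambda)\to\Het^{i+1}(\mathcal X'_{U'\setminus\{b'\}},\Lambda)\overset{Res_{b'}}{\to}\Het^i(X'_{b'},\Lambda),
\]
having chosen $U'\ni b'$ small enough that the class $\beta'+(\pi_{b'})\cup\beta_0'$ is represented in $\Het^{i+1}(\mathcal X'_{U'\setminus\{b'\}},\Lambda)$. This is possible since $\Het^{i+1}(X_{\eta'},\Lambda)=\varinjlim_{V\subseteq B'}\Het^{i+1}(\mathcal X'_V,\Lambda)$ and, on the smooth curve $B'$, any such $V$ may be enlarged to $V\cup\{b'\}$ while remaining open (cofiniteness). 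Exactness of the sequence then produces the desired lift $\tilde\beta'\in\Het^{i+1}(\mathcal X'_{U'},\Lambda)$ with $\tilde\beta'|_{\mathcal X'_{U'\setminus\{b'\}}}=\beta'+(\pi_{b'})\cup\beta_0'$, and restricting to $X_{\eta'}$ gives the required decomposition $\beta'=\tilde\beta'|_{X_{\eta'}}-(\pi_{b'})\cup\beta_0'$. The main technical obstacle is identifying the localization-sequence residue with the specialization-setup residue $Res_{b'}$, and keeping track of the Tate twists and signs in the cup product so that the formula from \Cref{cup-global-class} applies without an extra sign in the order we need.
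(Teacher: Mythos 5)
Your proposal is correct and follows essentially the same strategy as the paper's: reduce to $B'=B$ via the isomorphism $X'_{b'}\cong X_b$, lift the residue of $\beta'$ through the surjectivity hypothesis, show the residue of $\beta'+(\pi_{b'})\cup\beta_0'$ at $b'$ vanishes, and conclude by exactness of the localization sequence. The only minor variation is that the paper evaluates $Res_b((\pi_b)\cup\beta_0)$ through the specialization machinery (the definition of $sp_b$ together with \Cref{global sp}), while you apply \Cref{cup-global-class} directly with $Res_{b'}((\pi_{b'}))=1$, which is why you lift $-Res_{b'}(\beta')$ where the paper lifts $Res_b\beta$; both yield the stated decomposition once the sign is absorbed into the choice of $\beta_0'$.
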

\begin{remark}\label{deRham2}
To elaborate on the relation with logarithmic deRham cohomology of \Cref{deRham1}, in \cite[\textsection 4.1]{PS} to define the residue map one uses the fact that we can write $\omega$ as $\omega=\frac{\mathrm{d}t}{t}\wedge \eta+\eta'$, where $\eta'$ (and $\eta$) is global. The above lemma is analogous to this.
\end{remark}
\begin{proof}
First of all, note that the commutativity of the diagram
\[
\begin{tikzcd}
X'_{b'}\ar[r,"\cong"]\ar[d]&X_b\ar[d]\\
\mathcal X'\ar[r]&\mathcal X
\end{tikzcd}
\]
implies that $\Het^i(\mathcal X',\Lambda)\to\Het^i(X'_{b'},\Lambda)$ is also surjective. So without loss of generality, assume $B'=B$.

Let $\beta\in\Het^{i+1}(X_\eta,\Lambda)$, then by assumption, $Res_b\beta\in\Het^i(X_b,\Lambda)$ can be extended to $\widetilde{Res_b\beta}\in\Het^i(\mathcal X,\Lambda)$. Let $\beta_0:=\widetilde{Res_b\beta}|_{X_\eta}$, then using \Cref{global sp} we compute $Res_b(\beta+(\pi_b)\cup\beta_0)=Res_b\beta-sp_b(\beta_0)=Res_b\beta-\widetilde{Res_b\beta}|_{X_b}=0$, so there exists $\tilde\beta\in\Het^{i+1}(\mathcal X_U,\mu_\ell)$ with $b\in U\subseteq B$ such that $\tilde\beta|_{X_\eta}=\beta+(\pi_b)\cup\beta_0$, as wished.
\end{proof}

\begin{lemma}\label{completion of curve}
Let $f\colon(U,u)\to (B,b)$ be an irreducible étale neighborhood of $b\in B$ a smooth complete curve. Then there exists a smooth curve $B'$ with $U\subseteq B'$ and a finite morphism $B'\to B$ extending $U\to B$.
\end{lemma}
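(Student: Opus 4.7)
The plan is to take $B'$ to be the normalization of $B$ inside the function field $K(U)$ of $U$, and then exhibit $U$ as a Zariski open of $B'$ by means of Zariski's main theorem.

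First I would observe that since $U \to B$ is étale and $B$ is a smooth curve, $U$ is itself regular of pure dimension one, hence a smooth connected curve. As $U$ is irreducible and étale morphisms are open, the image of $U$ is a non-empty open of the irreducible $B$; so $U \to B$ is dominant, and it induces a finite separable extension $K(B) \hookrightarrow K(U)$ on function fields.

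Next, let $B'$ be the normalization of $B$ in $K(U)$. Since $B$ is a smooth projective curve and $K(U)/K(B)$ is finite separable, $B'$ is again a smooth projective curve, and the structure map $\pi\colon B' \to B$ is finite. The universal property of normalization, applied to the normal scheme $U$ with function field $K(U)$, then yields a canonical $B$-morphism $i\colon U \to B'$ which is birational by construction.

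Finally, I would argue that $i$ is an open immersion. The morphism $i$ is quasi-finite (since $\pi \circ i$ is quasi-finite and $\pi$ is separated) and birational between smooth curves. By Zariski's main theorem, $i$ factors as an open immersion $U \hookrightarrow U'$ followed by a finite morphism $g\colon U' \to B'$; but $g$ is birational and $B'$ is normal, so $g$ is an isomorphism, making $i$ itself an open immersion. Thus $U$ sits inside $B'$ as a Zariski open, and $\pi$ restricts to the given morphism $U \to B$. The only substantive step is this last application of Zariski's main theorem, which is essentially routine in dimension one.
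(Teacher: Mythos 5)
Your proof is correct and arrives at the same curve $B'$ as the paper, but it traverses the construction in the opposite direction. The paper first forms the normalized completion of $U$ (so the open inclusion $U \subseteq B'$ is immediate), then extends the rational map $B' \dashrightarrow B$ to a genuine morphism using the valuative criterion of properness and completeness of $B$, and finally quotes the fact that a non-constant morphism of smooth complete curves is finite. You instead take $B'$ to be the normalization of $B$ in $K(U)$, so finiteness of $B' \to B$ is built in, and you then spend the work in the other place: producing the map $U \to B'$ via the universal property of relative normalization and showing it is an open immersion by Zariski's main theorem together with normality of $B'$. The two curves are abstractly the same (both are the smooth projective model of $K(U)$ over $k$), and the two routes simply choose which half of the conclusion to get for free; either is standard and both are short, so the difference is a matter of taste rather than substance.
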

\begin{proof}
Let $B'$ be the normalized completion of $U$. Then we have a rational map $B'\to B$.
Using the valuative criterion of properness and the fact that $B$ is complete, we can extend over
every $b'\in B'\setminus U$ giving a map $B'\to B$ (\cite[0BXZ]{Stacks}). As $B'$ is smooth, complete and irreducible, \cite[Proposition II.6.8]{Hart} implies that $B'\to B$ is finite.
\end{proof}

\begin{prop}\label{general-comparison-cosp}
Same assumptions as in \Cref{Cohomology Decomposition}, but now assume that $\eta\subseteq\eta'$ is a finite separable extension (that is, $B'\to B$ is locally étale). Then the following diagram commutes
\[
\begin{tikzcd}
\Het^{i+1}(X_{\bar\eta},\Lambda)\ar[r,"cosp^\vee"]&\Het^{i+1}(X_b,\Lambda)\\
\Het^{i+1}(X_{\eta'},\Lambda)\ar[u]\ar[r,"sp_{b'}"]&\Het^{i+1}(X'_{b'},\Lambda)\ar[u,"\cong"]
\end{tikzcd}.
\]
\end{prop}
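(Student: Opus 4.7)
The plan is to reduce to the case $B=B'$ and then explicitly compute both sides using the decomposition from \Cref{Cohomology Decomposition}. For the reduction, I would apply \Cref{dualcosp-base-point-indep} to the finite map $B'\to B$: since $\eta\subseteq\eta'$ is finite separable, the separable closures coincide and the left vertical arrow in \Cref{dualcosp-base-point-indep} is the identity, so commutativity of the square in question is equivalent to the analogous statement for the family $\mathcal X'\to B'$ and the point $b'$. Hence from now on I may assume $B=B'$, $\eta=\eta'$, $b=b'$, and need to show that $cosp^\vee(\beta|_{X_{\bar\eta}})=sp_b(\beta)$ in $\Het^{i+1}(X_b,\Lambda)$ for every $\beta\in\Het^{i+1}(X_\eta,\Lambda)$.

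Applying \Cref{Cohomology Decomposition}, I would write $\beta=\tilde\beta|_{X_\eta}-(\pi_b)\cup\beta_0$ with $\tilde\beta\in\Het^{i+1}(\mathcal X_U,\Lambda)$ for some Zariski open neighborhood $U$ of $b$, $\beta_0\in\Het^i(X_\eta,\Lambda)$, and $\pi_b$ a uniformizer at $b$. The second summand vanishes on both sides: on the $sp$-side by the proposition just preceding \Cref{deRham1}, which shows that the image of cupping with $(\pi_b)$ lies in $\ker sp_b$; on the $cosp^\vee$-side because already $(\pi_b)\in\Het^1(\eta,\mu_\ell)$ restricts to zero in $\Het^1(\bar\eta,\mu_\ell)=(\bar\eta)^\ast/((\bar\eta)^\ast)^\ell$, as every unit in the separably closed field $\bar\eta$ is an $\ell$-th power (using $\ell$ prime to the characteristic). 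Meanwhile \Cref{global sp} gives $sp_b(\tilde\beta|_{X_\eta})=\tilde\beta|_{X_b}$. Consequently the proposition reduces to the key claim: for any $\tilde\beta\in\Het^{i+1}(\mathcal X_U,\Lambda)$ one has $cosp^\vee(\tilde\beta|_{X_{\bar\eta}})=\tilde\beta|_{X_b}$.

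To prove this claim, I would test against the Poincaré-duality characterization of $cosp^\vee$ given in \Cref{def dual cosp}. Given $\gamma\in\mrH_c^{2d-i-1}(X_b,\Lambda)=(R^{2d-i-1}f_!\Lambda)_b$, lift it to a section $\tilde\gamma\in\Gamma(V,R^{2d-i-1}f_!\Lambda)$ on some étale neighborhood $V\to B$ of $b$, so that by definition $cosp(\gamma)=\tilde\gamma_{\bar\eta}$. The cup-product compatibility diagram (\ref{cup product}) gives $(\tilde\gamma\cup\tilde\beta|_{\mathcal X_V})_s=\tilde\gamma_s\cup\tilde\beta|_{X_s}$ at both $s=b$ and $s=\bar\eta$, and applying \Cref{constant trace} to the class $\tilde\gamma\cup\tilde\beta|_{\mathcal X_V}\in\Gamma(V,R^{2d}f_!\Lambda)$ yields
$$
\int_{X_{\bar\eta}} cosp(\gamma)\cup\tilde\beta|_{X_{\bar\eta}} \;=\; cosp\int_{X_b}\gamma\cup\tilde\beta|_{X_b},
$$
which is exactly the identity characterizing $cosp^\vee(\tilde\beta|_{X_{\bar\eta}})=\tilde\beta|_{X_b}$.

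The main technical obstacle is precisely this last step: tracking the Poincaré-duality characterization of $cosp^\vee$ through the base-change compatibility of the trace and the cup product so that globally extendable classes cospecialize to their restrictions. Once the key claim is in place, the decomposition lemma packages both vertical arrows of the square into that single scenario, and the rest is bookkeeping.
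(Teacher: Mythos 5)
Your proof is correct and follows the same basic blueprint as the paper's: reduce to the case $B'=B$ via \Cref{dualcosp-base-point-indep}, split the class using \Cref{Cohomology Decomposition}, and finish by testing the Poincaré-duality characterization of $cosp^\vee$ with the help of the cup-product compatibility (\ref{cup product}) and \Cref{constant trace}. Where you diverge is in the order of operations. The paper fixes the test class $\alpha\in\mrH_c^{2d-i-1}(X_b,\Lambda)$ first, extends it over an étale neighborhood $U\to B$, completes $U$ to a new finite cover $B''\to B$ by \Cref{completion of curve}, and only then applies \Cref{Cohomology Decomposition} to $\beta$ pulled back to $B''$ — so the decomposition depends on the test class. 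You instead decompose $\beta$ once and for all over $B$, discard the $(\pi_b)\cup\beta_0$ term on both sides, and isolate the clean statement that a class extendable over a Zariski neighborhood of $b$ satisfies $cosp^\vee(\tilde\beta|_{X_{\bar\eta}})=\tilde\beta|_{X_b}$ — a $cosp^\vee$-analogue of \Cref{global sp}. The test class $\gamma$ then only needs an étale neighborhood $V\to B$, and there is no need to complete $V$ to a finite cover, since $\tilde\beta$ simply restricts along $\mathcal X_V\to\mathcal X_U$. This avoids the second finite extension and the extra invocation of \Cref{compatibility-sp-fin-ext} at the end of the paper's argument, making the logical dependence a bit more transparent. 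Two minor imprecisions worth flagging: the left vertical arrow in \Cref{dualcosp-base-point-indep} is the canonical isomorphism induced by $\bar\eta'=\bar\eta$ (via the base-change comparison), not literally the identity; and the coefficients should be $\mu_{\ell^r}$ rather than $\mu_\ell$ (the specialization map is only defined for such coefficients, and the same vanishing $\Het^1(\bar\eta,\mu_{\ell^r})=0$ holds).
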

\begin{proof}
By \Cref{dualcosp-base-point-indep} it suffices to show the commutativity of 
\[
\begin{tikzcd}
\Het^{i+1}(X_{\bar{\eta}},\Lambda)\ar[r,"cosp^\vee"]&\Het^{i+1}(X'_{b'},\Lambda)\\
\Het^{i+1}(X_{\eta'},\Lambda)\ar[u]\ar[ru,"sp_{b'}"]&
\end{tikzcd}.
\]
For the sake of notation, write $\eta'=\eta$ and $b'=b$. Then we need the equality
\[
\int_{X_{\bar\eta}}cosp(\alpha)\cup\beta_{\bar\eta}=cosp\int_{X_b}\alpha\cup sp_b(\beta)
\]
to hold for every $\alpha\in\mrH_c^{2d-i-1}(X_b)$ and every $\beta\in\Het^{i+1}(X_\eta)$. As $\alpha\in (R^{2d-i-1}f_!\Lambda)_b$, there exists an étale neighborhood $(U,u)\to (B,b)$ so that $\alpha$ can be represented by some $\tilde\alpha\in\Gamma(U,R^{2d-i-1}f_!\Lambda)$ with $\tilde\alpha_u=\alpha$ and $\tilde\alpha_{\bar\eta}=cosp(\alpha)$.

We may assume that $U$ is irreducible with generic point $\eta'$, so we view $U\to B$ as an open of some finite  $B'\to B$ by \Cref{completion of curve}. Pull back $\beta$ to $\beta'\in\Het^{i+1}(X_{\eta'},\Lambda)$, then by \Cref{Cohomology Decomposition} we can write $\beta'=\tilde\beta'|_{X_{\eta'}}-(\pi_u)\cup\beta_0'$. Now we compute that
\[
\int_{X_{\bar\eta}}cosp(\alpha)\cup\beta'_{\bar\eta}=\int_{X_{\bar\eta'}}\tilde\alpha_{\bar\eta'}\cup\tilde\beta'|_{X_{\bar\eta}}=\int_{X_{\bar\eta}}(\tilde\alpha\cup\tilde\beta')_{\bar\eta}
\]
and
\[
cosp\int_{X_u}\alpha\cup sp_u(\beta')=cosp\int_{X_u}\tilde\alpha_u\cup\tilde\beta'|_{X_u}=cosp\int_{X_u}(\tilde\alpha\cup\tilde\beta')_u,
\]
which are equal by \Cref{constant trace}. This equality implies that the diagram
\[
\begin{tikzcd}
\Het^{i+1}(X_{\bar{\eta}},\Lambda)\ar[r,"cosp^\vee"]&\Het^{i+1}(X_u,\Lambda)\\
\Het^{i+1}(X_{\eta'},\Lambda)\ar[u]\ar[ru,"sp_u"]&
\end{tikzcd}
\]
commutes. By \Cref{compatibility-sp-fin-ext} we know that
\[
\begin{tikzcd}
\Het^{i+1}(X_{\eta'},\Lambda)\ar[r,"sp_u"]&\Het^{i+1}(X_u,\Lambda)\ar[d,"\cong"]\\
\Het^{i+1}(X_\eta,\Lambda)\ar[u]\ar[r,"sp_b"]&\Het^{i+1}(X_b,\Lambda)
\end{tikzcd}
\]
commutes. So again by \Cref{dualcosp-base-point-indep} we conclude the result.
\end{proof}

\subsection{Comparison for Lefschetz Pencils}\label{L pencil}
Here we are going to apply the previous section on a Lefschetz pencil $f\colon\mathcal X\to\P^1$ with fibres $2n-1$ dimensional projective hypersurfaces of degree $\geq 3$, \cite[III\textsection 1]{FK}. We let $0\in\P^1$ be a critical value and $X_0$ its corresponding singular fibre with ordinary double point $x_0\in X_0$. Note that now (locally around $0$) ${\mathcal X\setminus\{x_0\}\to\P^1}$ is a smooth family. We will then consider the maps $cosp^\vee\colon\Het^i(X_{\bar\eta},\Lambda)\to\Het^i(X_0\setminus\{x_0\},\Lambda)$ and $sp\colon \Het^i(X_\eta,\Lambda)\to\Het^i(X_0\setminus \{x_0\},\Lambda)$ and show they are compatible in the sense of the previous section.

\begin{prop}\label{extendable singular point}
Let $X\subseteq\P^{n+1}$ be a hypersurface of degree $\geq3$ with ordinary double point $x\in X$. Then the restriction map $\Het^{2r}(X,\Lambda)\to\Het^{2r}(X\setminus \{x\},\Lambda)$ is surjective for every $2r<n$.
\end{prop}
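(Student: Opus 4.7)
The approach is to resolve the singularity first, and then to combine a Lefschetz hyperplane argument on the resolution with the Gysin sequence.

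Let $p\colon\tilde X\to X$ be the blowup at $x$. Since $x$ is an ordinary double point, the projectivized tangent cone is a smooth projective quadric $\bar Q\subseteq\P^n$ of dimension $n-1$, so $\tilde X$ is smooth and the exceptional divisor $E\subseteq\tilde X$ is isomorphic to $\bar Q$. The blowup $\tilde X$ arises naturally as the proper transform of $X$ in the blowup $\pi\colon\tilde\P^{n+1}\to\P^{n+1}$ of the ambient projective space at $x$: writing $H=\pi^*h$ for the pullback of the hyperplane class and $\hat E$ for the exceptional divisor of $\pi$, one has $[\tilde X]=dH-2\hat E$, which is ample on the smooth projective variety $\tilde\P^{n+1}$ precisely because $d\geq 3$ (the multiplicity of $X$ at the ODP is $2$). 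Thus $\tilde X$ is a smooth ample divisor in $\tilde\P^{n+1}$ and the intersection $E=\tilde X\cap\hat E$ is transverse.

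The restriction map factors as
\[
\Het^{2r}(X,\Lambda)\overset{p^*}{\to}\Het^{2r}(\tilde X,\Lambda)\to\Het^{2r}(\tilde X\setminus E,\Lambda)=\Het^{2r}(X\setminus\{x\},\Lambda).
\]
The Gysin sequence for the smooth pair $(\tilde X,E)$, combined with the vanishing $\Het^{2r-1}(E,\Lambda(-1))=0$ (odd-degree cohomology of a smooth quadric vanishes), shows that the second arrow is surjective with kernel $i_{E*}\Het^{2r-2}(E,\Lambda(-1))$. The surjectivity claim therefore reduces to the equality
\[
\Het^{2r}(\tilde X,\Lambda)=p^*\Het^{2r}(X,\Lambda)+i_{E*}\Het^{2r-2}(E,\Lambda(-1)).
\]

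To establish this equality I would apply the Lefschetz hyperplane theorem to the ample smooth divisor $\tilde X\subseteq\tilde\P^{n+1}$: for $2r<n=\dim\tilde X$ the restriction $\Het^{2r}(\tilde\P^{n+1},\Lambda)\to\Het^{2r}(\tilde X,\Lambda)$ is an isomorphism. By the standard blowup formula for $\pi$, the group $\Het^{2r}(\tilde\P^{n+1},\Lambda)$ is generated by the two classes $H^r$ and $i_{\hat E*}(\eta^{r-1})$, where $\eta$ denotes the hyperplane class on $\hat E\cong\P^n$. The class $H^r$ restricts on $\tilde X$ to $p^*(h^r|_X)$, which lies in $p^*\Het^{2r}(X,\Lambda)$. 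For the second generator, proper base change for closed immersions applied to the Cartesian square
\[
\begin{tikzcd}
E \ar[r]\ar[d] & \tilde X\ar[d] \\
\hat E \ar[r] & \tilde\P^{n+1}
\end{tikzcd}
\]
gives $(i_{\hat E*}\alpha)|_{\tilde X}=i_{E*}(\alpha|_E)$, so this generator restricts into $i_{E*}\Het^{2r-2}(E,\Lambda(-1))$. This establishes the required equality.

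The only subtle input is the Lefschetz hyperplane theorem with an ample (rather than hyperplane) smooth divisor in $\ell$-adic étale cohomology; this is where the hypothesis $d\geq 3$ enters the argument, through the ampleness of $dH-2\hat E$ on $\tilde\P^{n+1}$. The remaining steps are routine applications of the Gysin sequence, the blowup formula, proper base change, and the cohomology of smooth quadrics.
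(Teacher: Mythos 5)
Your proposal is correct and follows essentially the same route as the paper: blow up at the ordinary double point, reduce via the Gysin sequence and vanishing of odd-degree quadric cohomology to showing $\Het^{2r}(\widetilde X,\Lambda)$ is spanned by $p^*\Het^{2r}(X,\Lambda)$ together with $i_{E*}\Het^{2r-2}(E,\Lambda(-1))$, and then establish this by combining the Lefschetz theorem for the ample divisor $\widetilde X\subseteq\widetilde\P^{n+1}$ (this is exactly where $d\geq 3$ enters, via \cite[Lemma 3.10]{Lindner}) with the blow-up formula and the compatibility of Gysin maps with pullback along the transverse Cartesian square. The only cosmetic difference is that the paper phrases the final step as a commutative square between two blow-up decompositions rather than tracking generators one by one.
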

\begin{proof}
Let $\pi\colon\widetilde X:=\mathrm{Bl}_{\{x\}}(X)\to X$ be the blow-up with exceptional divisor $i\colon D\to\widetilde X$, which is a smooth quadric. Write $j\colon U=X\setminus\{x\}\cong\widetilde X\setminus D\to\widetilde X$ for the inclusion of the complement. The localization exact sequence gives us a commutative diagram
\[
\begin{tikzcd}
\Het^{2r-2}(D,\Lambda)\ar[r,"i_\ast"]&\Het^{2r}(\widetilde X,\Lambda)\ar[r, two heads,"j^\ast"]&\Het^{2r}(U,\Lambda)\ar[r]&\Het^{2r-1}(D,\Lambda)=0\\
&\Het^{2r}(X,\Lambda)\ar[u,"\pi^\ast"]\ar[r]&\Het^{2r}(X\setminus\{x\},\Lambda)\ar[u,"\cong"]&
\end{tikzcd}.
\]

To show that $j^\ast\circ\pi^\ast$ is surjective, it suffices to show that $\pi^\ast+i_\ast$ is surjective. For this, we can view $\widetilde X\subseteq\mathrm{Bl}_{\{x\}}(\P^{n+1}):=\widetilde\P$ as an embedded blow-up inside $\tau\colon\widetilde\P\to\P^{n+1}$, with exceptional divisor $l\colon\P^n\cong E\to\widetilde \P$. By \cite[Lemma 3.10]{Lindner} we know that $\widetilde X\subseteq\widetilde \P$ is an ample divisor, so that $\Het^i(\widetilde \P,\Lambda)\isomto\Het^i(\widetilde X,\Lambda)$ is an isomorphism for $i<n$.

We claim that the following diagram commutes
\[
\begin{tikzcd}
\Het^{2r}(\P^{n+1},\Lambda)\oplus\Het^{2r-2}(E,\Lambda)\ar[r,"\tau^\ast+l_\ast"]\ar[r,swap,"\cong"]\ar[d]&\Het^{2r}(\widetilde\P,\Lambda)\ar[d,"\cong"]\\
\Het^{2r}(X,\Lambda)\oplus\Het^{2r-2}(D,\Lambda)\ar[r,"\pi^\ast+i_\ast"]&\Het^{2r}(\widetilde X,\Lambda)
\end{tikzcd}.
\]
For this, write $k\colon\widetilde X\to\widetilde\P$, then as the cohomology of $E$ is generated by $j^\ast[E]^{r-1}$ we compute that $k^\ast j_\ast j^\ast[E]^{r-1}=k^\ast [E]^r=[D]^r$ and $i_\ast k^\ast j^\ast[E]^{r-1}=i_\ast i^\ast[D]^{r-1}=[D]^r$.

Now the top horizontal map is an isomorphism by the blow-up formula and the right vertical map is an isomorphism for $2r<n$ by the above. So we conclude that the bottom horizontal map is surjective, as wished.
\end{proof}

\begin{theorem}\label{Lefschetz Extendable}
Let $\mathcal X\to\P^1$ be a Lefschetz pencil as in the beginning of this section. Let $B'\to\P^1$ be a finite map and $\mathcal X':=\mathcal X\times_{\P^1}B'$ be the base change. Then $\Het^{2r}(\mathcal X,\Lambda)\to\Het^{2r}(X_0\setminus\{x_0\},\Lambda)$ is surjective for every $2r<\dim X_0$.
\end{theorem}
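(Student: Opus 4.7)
The plan is to factor the restriction map through the fibre $X_0$ as
\[
\Het^{2r}(\mathcal X, \Lambda) \xrightarrow{a} \Het^{2r}(X_0, \Lambda) \xrightarrow{b} \Het^{2r}(X_0\setminus\{x_0\}, \Lambda),
\]
and prove surjectivity of each arrow in the range $2r<\dim X_0=2n-1$. The base-change $\mathcal X'\to\mathcal X$ plays no essential role here: surjectivity out of $\mathcal X$ implies surjectivity out of $\mathcal X'$ by pulling back along the finite map $\mathcal X'\to\mathcal X$.

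Surjectivity of $b$ is an immediate application of \Cref{extendable singular point} to the $(2n-1)$-dimensional hypersurface $X_0\subset\P^{2n}$ with unique ordinary double point $x_0$; the hypothesis $2r<2n-1$ is exactly the one required there.

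For $a$, I would exploit that $\mathcal X\hookrightarrow\P^{2n}\times\P^1$ is cut out by a single equation of bidegree $(d,1)$ and is therefore an ample divisor. Weak Lefschetz gives $\Het^{2r}(\P^{2n}\times\P^1,\Lambda)\cong\Het^{2r}(\mathcal X,\Lambda)$ for $2r<\dim\mathcal X=2n$, and by K\"unneth the source is $\Lambda$-spanned by $H^r$ and $H^{r-1}\cdot L$, where $H$ and $L$ denote the hyperplane classes on $\P^{2n}$ and $\P^1$ respectively. Restricting to $X_0$ the class $L$ is killed (it is the first Chern class of $\pi_2^{\ast}\mathcal O_{\P^1}(\{0\})$ restricted to the fibre over $0$, hence of a trivial bundle), so the image of $a$ equals $\Lambda\cdot h^r$ with $h:=H|_{X_0}$.

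It therefore remains to show that $\Het^{2r}(X_0,\Lambda)=\Lambda\cdot h^r$ for $2r<2n-1$, i.e.\ a weak-Lefschetz-type statement for the singular hypersurface $X_0$; this is the main obstacle. I would extract it by reusing the blowup setup from the proof of \Cref{extendable singular point}: the blowup $\pi\colon\widetilde{X_0}\to X_0$ at $x_0$ is smooth and embeds as an ample divisor in $\widetilde{\P}:=\Bl_{x_0}(\P^{2n})$ by \cite[Lemma 3.10]{Lindner}, so weak Lefschetz delivers $\Het^{2r}(\widetilde{\P},\Lambda)\cong\Het^{2r}(\widetilde{X_0},\Lambda)$ in the required range; the blowup formula describes the left-hand side in terms of $H^r$ and powers of the exceptional divisor on $\widetilde{\P}$, whose restrictions to $\widetilde{X_0}$ all factor through $i_\ast\Het^\ast(D,\Lambda)$. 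Combined with the surjectivity of $\pi^\ast+i_\ast$ already established in \Cref{extendable singular point}, this forces $\Het^{2r}(X_0,\Lambda)$ to be generated by $h^r$, and the two surjectivities then combine to yield the result.
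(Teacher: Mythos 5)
The overall structure of your proof — factoring through the closed fibre $X_0$ and reducing to a weak Lefschetz statement for the singular hypersurface $X_0\subset\P^{2n}$ — is sound in outline but takes a different route than the paper, and your final step has a gap. The paper does not factor through $X_0$: it uses the blow-down $\mathcal X\to Y=\P^{2n}$ (the contraction of the exceptional locus over the base locus), observes that $\Het^{i}(Y,\Lambda)\to\Het^{i}(X_0,\Lambda)$ is already an \emph{isomorphism} for $i<\dim X_0$ by the Lefschetz hyperplane theorem for (possibly singular) hyperplane sections \cite[Corollary I.9.4]{FK}, composes with the surjectivity from \Cref{extendable singular point}, and then transfers surjectivity from $Y$ to $\mathcal X$ by a commuting triangle. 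Your realization of $\mathcal X$ as the ample divisor of bidegree $(d,1)$ in $\P^{2n}\times\P^1$ together with K\"unneth gives the same identification of $\Het^{2r}(\mathcal X,\Lambda)$ as the paper's blow-up formula, so that part is equivalent; and your observation that $B'$ plays no role is correct.

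The gap is in the sentence ``Combined with the surjectivity of $\pi^\ast+i_\ast$ already established in \Cref{extendable singular point}, this forces $\Het^{2r}(X_0,\Lambda)$ to be generated by $h^r$.'' The ingredients you cite — surjectivity of $\pi^\ast+i_\ast\colon\Het^{2r}(X_0)\oplus\Het^{2r-2}(D)\to\Het^{2r}(\widetilde{X_0})\cong\Lambda h^r\oplus\Lambda[D]^r$, and the fact that $i_\ast\Het^{2r-2}(D)=\Lambda[D]^r$ — yield only that the composite $\Het^{2r}(X_0)\xrightarrow{\pi^\ast}\Het^{2r}(\widetilde{X_0})\twoheadrightarrow\Lambda h^r$ is surjective. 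They place no upper bound on $\Het^{2r}(X_0)$; a priori $\pi^\ast$ could have kernel, or $\Het^{2r}(X_0)$ could be strictly larger than $\Lambda h^r$, and surjectivity of $\pi^\ast+i_\ast$ would still hold. To close the gap you would need an additional input, e.g.\ the long exact sequence attached to the abstract blow-up square $(D,\widetilde{X_0};\{x_0\},X_0)$, which for $0<2r<2n-1$ gives $\Het^{2r}(X_0)\isomto\ker\bigl(\Het^{2r}(\widetilde{X_0})\to\Het^{2r}(D)\bigr)=\Lambda h^r$, using $\Het^{2r-1}(D)=\Het^{2r}(\{x_0\})=0$. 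But at that point you have simply re-proved the Lefschetz hyperplane theorem for the singular hypersurface, which is exactly what the paper cites directly (and in fact already uses in \Cref{rank 1}). The paper's route is therefore shorter: it never needs the surjectivity of $\Het^{2r}(\mathcal X)\to\Het^{2r}(X_0)$ at all, only that $\Het^{2r}(Y)\to\Het^{2r}(X_0\setminus\{x_0\})$ is surjective and factors through $\Het^{2r}(\mathcal X)$.
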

\begin{proof}
Suppose $\mathcal X$ is the Lefschetz pencil of hyperplane sections of some smooth projective space $Y$ (of the same dimension). Then $\mathcal X$ can be identified with the blow-up of $Y$ in the base locus, giving $\P^1\leftarrow\mathcal X\to Y$. By the Lefschetz hyperplane theorem, we have that $\Het^i(Y,\Lambda)\to\Het^i(X_0,\Lambda)$ is an isomorphism for $i<\dim X_0$. If the pencil consists of projective hypersurfaces, we conclude  from \Cref{extendable singular point} that $\Het^{2r}(X_0,\Lambda)\to\Het^{2r}(X_0\setminus\{x_0\},\Lambda)$ is surjective for $2r<\dim X_0$. Now note that
\[
\begin{tikzcd}
X_0\setminus\{x_0\}\ar[r]\ar[d]&X_0\ar[r]&Y\\
\mathcal X\ar[rru]&&
\end{tikzcd}
\]
commutes and as pull-back via the top horizontal composition is surjective, implies that pulling back via the vertical map is surjective.
\end{proof}

\begin{cor}\label{comparison-Lefschetz}
Let $\mathcal X\to\P^1$ be a Lefschetz pencil with odd dimensional fibres and $B'\to\P^1$ be a finite morphism from a smooth $B'$ with generic point $\eta'$, separable over $\eta$. Let $0'\in B'$ lying over $0\in \P^1$, then the diagram
\[
\begin{tikzcd}
\Het^i(X_{\bar\eta},\Lambda)\ar[r,"cosp^\vee"]&\Het^i(X_0\setminus\{x_0\},\Lambda)\\
\Het^i(X_{\eta'},\Lambda)\ar[u]\ar[r,"sp_{0'}"]&\Het^i(X'_{0'}\setminus\{x'_{0'}\},\Lambda)\ar[u,"(f_{0'})_\ast"]\ar[u,swap,"\cong"]
\end{tikzcd}
\]
commutes for every $i\leq\dim X_0$.
\end{cor}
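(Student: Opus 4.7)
The plan is to reduce this corollary directly to \Cref{general-comparison-cosp} applied to a smooth family obtained from the pencil by discarding the unique singular point of the special fibre. First I restrict to a Zariski open $U \subseteq \P^1$ containing $0$ as its only critical value of the pencil; since in a Lefschetz pencil the total space $\mathcal X$ is smooth away from the ordinary double points of the singular fibres, $\mathcal X^\circ := (\mathcal X|_U) \setminus \{x_0\} \to U$ is a smooth family over a smooth curve, with generic fibre $X_\eta$ and fibre $X_0 \setminus \{x_0\}$ over $0$. After base-changing along the finite morphism $B' \to \P^1$, I obtain an analogous smooth family over $U' := U \times_{\P^1} B'$ with generic fibre $X_{\eta'}$ and fibre $X'_{0'} \setminus \{x'_{0'}\}$ over $0'$; the generic extension $\eta \subseteq \eta'$ is separable by hypothesis.

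Next I verify the surjectivity hypothesis of \Cref{general-comparison-cosp} (which stems from \Cref{Cohomology Decomposition}): to obtain commutativity at degree $i$, I need $\Het^{i-1}(\mathcal X^\circ, \Lambda) \to \Het^{i-1}(X_0 \setminus \{x_0\}, \Lambda)$ to be surjective. The map from \Cref{Lefschetz Extendable} factors through this restriction via the open inclusion $\mathcal X^\circ \hookrightarrow \mathcal X$, so surjectivity of the composite forces surjectivity of the second factor. \Cref{Lefschetz Extendable} directly supplies this for $i-1$ even with $i-1 < \dim X_0 = 2n-1$, covering all odd $i \leq 2n-1$. For the remaining case of $i-1$ odd, I would argue instead that the target $\Het^{i-1}(X_0 \setminus \{x_0\}, \Lambda)$ itself vanishes, making surjectivity automatic: using the identification $X_0 \setminus \{x_0\} \cong \widetilde X_0 \setminus D$ for $\widetilde X_0 = \Bl_{\{x_0\}}(X_0)$ with smooth exceptional quadric $D$ of even dimension $2n-2$, the Gysin long exact sequence combined with the vanishing of odd-degree cohomology of $D$ and the Lefschetz-type vanishing of odd-degree cohomology of $\widetilde X_0$ below the middle dimension (in the spirit of the proof of \Cref{extendable singular point}) gives the desired vanishing.

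Finally, apply \Cref{general-comparison-cosp} to the smooth family $\mathcal X^\circ \to U$, the finite separable base change $U' \to U$, and the closed points $0 \in U$, $0' \in U'$. This produces the required commuting square. The right-hand vertical identification $(f_{0'})_\ast\colon \Het^i(X'_{0'} \setminus \{x'_{0'}\}) \xrightarrow{\cong} \Het^i(X_0 \setminus \{x_0\})$ is an isomorphism because it is the pushforward along the fibre isomorphism induced by the algebraically closed base field, as already noted at the start of \Cref{L pencil}. The main obstacle in this plan is the cohomological vanishing needed to handle the odd-degree hypothesis check; with that in hand, the corollary is a formal consequence of \Cref{general-comparison-cosp}.
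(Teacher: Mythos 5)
Your proposal follows the same outline as the paper's proof: apply \Cref{general-comparison-cosp} to a smooth family obtained from the pencil by discarding the singular points, and verify the surjectivity hypothesis from \Cref{Cohomology Decomposition} by splitting into even and odd degrees. The even case is handled identically. There are two points of divergence, one minor and one a genuine gap.

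The minor one: you restrict the base to a Zariski open $U\subseteq\P^1$ containing $0$ as its only critical value, whereas the paper works with $\mathcal X_{sm}\to\P^1$ (all singular points of all singular fibres removed), a smooth family over the complete curve $\P^1$. Working over a non-complete $U$ is slightly awkward, since the proof of \Cref{general-comparison-cosp} invokes \Cref{completion of curve}, which requires a complete base; you would have to pass back to $\P^1$ anyway, so it is cleaner to keep the complete base from the start.

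The substantive one is in the odd-degree case. As written, your sketch does not establish the vanishing of $\Het^{i-1}(X_0\setminus\{x_0\},\Lambda)$. The Gysin sequence for the smooth pair $(\widetilde X_0, D)$ at degree $i-1$ reads
\[
\Het^{i-3}(D,\Lambda)\to\Het^{i-1}(\widetilde X_0,\Lambda)\to\Het^{i-1}(\widetilde X_0\setminus D,\Lambda)\to\Het^{i-2}(D,\Lambda)\overset{i_\ast}{\to}\Het^{i}(\widetilde X_0,\Lambda).
\]
The two vanishings you cite --- $\Het^{i-3}(D,\Lambda)=0$ (odd degree on an even-dimensional quadric) and $\Het^{i-1}(\widetilde X_0,\Lambda)=0$ (odd degree below the middle, via the Lefschetz hyperplane theorem for the ample divisor $\widetilde X_0\subseteq\widetilde{\P^{2n}}$) --- only yield an \emph{injection} $\Het^{i-1}(\widetilde X_0\setminus D,\Lambda)\hookrightarrow\Het^{i-2}(D,\Lambda)$, and the target is non-zero since $i-2$ is even. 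To finish you would need to show that $i_\ast\colon\Het^{i-2}(D,\Lambda)\to\Het^i(\widetilde X_0,\Lambda)$ is injective; this does hold, by the commutative square in the proof of \Cref{extendable singular point}, which identifies $\pi^\ast+i_\ast$ with $\tau^\ast+l_\ast$ and shows it is a surjection between finite $\Lambda$-modules of the same cardinality, hence an isomorphism --- but this step is missing from your argument. The paper sidesteps all of this: for $i-1$ odd it computes directly, $\Het^{i-1}(X_0\setminus\{x_0\},\Lambda)\cong\mrH_c^{2d-i+1}(X_0\setminus\{x_0\},\Lambda)^\vee\cong\Het^{2d-i+1}(X_0,\Lambda)^\vee$, which vanishes by \cite[Theorem III.4.3(0)]{FK}. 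That route is both shorter and avoids the blow-up calculation entirely.
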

\begin{proof}
Write $\mathcal X_{sm}$ for $\mathcal X$ with all the singular points of the singular fibres removed. The result follows if we can apply \Cref{general-comparison-cosp} to the smooth family $\mathcal X_{sm}\to\P^1$. This is possible if we show that $\Het^{i-1}(\mathcal X_{sm},\Lambda)\to\Het^{i-1}(X_0\setminus\{x_0\},\Lambda)$ is surjective for every $i\leq\dim(X_0)$.

When $i-1$ is even, this follows from \Cref{Lefschetz Extendable} as the surjection $\Het^{2r}(\mathcal X,\Lambda)\twoheadrightarrow\Het^{2r}(X_0\setminus\{x_0\},\Lambda)$ decomposes as $\Het^{2r}(\mathcal X,\Lambda)\to\Het^{2r}(\mathcal X_{sm},\Lambda)\to\Het^{2r}(X_0\setminus\{x_0\},\Lambda)$.

When $i-1$ is odd, we compute that $\Het^{i-1}(X_0\setminus\{x_0\},\Lambda)\cong\mrH_c^{2d-i+1}(X_0\setminus\{x_0\},\Lambda)^\vee\cong\Het^{2d-i+1}(X_0,\Lambda)^\vee=0$ by \cite[Theorem III.4.3(0)]{FK}, so \Cref{general-comparison-cosp}.
\end{proof}

\begin{remark} 
\Cref{comparison-Lefschetz} can be viewed more generally as follows, let $f\colon\mathcal X\to B$ be a proper degeneration over a smooth curve $B$. Then by \cite[0A3S]{Stacks} and the proof of \cite[095T]{Stacks} we get that $\Het^i(\mathcal X,\Lambda)\to\Het^i(X_b,\Lambda)$ is surjective for every closed point $b$. 

Let $b$ be the critical value and set $\mathcal X_{sm}:=\mathcal X\setminus Sing(X_b)$ so that $(X_b)_{sm}:=X_b\cap\mathcal X_{sm}$ and $\mathcal X_{sm}\to B$ is (around $b$) a smooth family. Then with the above, if $\Het^i(X_b,\Lambda)\to\Het^i((X_b)_{sm},\Lambda)$ is surjective, we have $\Het^i(\mathcal X_{sm},\Lambda)\to\Het^i((X_b)_{sm},\Lambda)$ is surjective. So as in \Cref{general-comparison-cosp}, the surjectivity of $\Het^i(X_b,\Lambda)\to\Het^i((X_b)_{sm},\Lambda)$ implies a comparison between $cosp^\vee$ and $sp$.
\end{remark}

\section{Application to the Integral Hodge/Tate Conjecture}\label{main section}
In this section we present our main result. We start with some computations of the cohomology groups of varieties related to an ordinary double point. After this we give the full computation of the cohomology of an Enriques surface. This is probably well known to the experts, but no precise reference could be found. After this, we obtain our main results directly.

\subsection{Residue of an Ordinary Double Point}

Let $X_0\subseteq\P^{2n}$ be a projective hypersurface of dimension $2n-1$ over an algebraically closed fied $k$ with ordinary double point $x_0\in X_0$ and of degree $\geq 3$. Let $\widetilde{X_0}:=\Bl_{\{x_0\}}(X_0)$ be the blow-up of $X_0$ in this singular point. Then $\widetilde{X_0}$ is smooth and the exceptional divisor is a smooth quadric $D\subseteq\widetilde{X_0}$ of dimension $2n-2$. We view $\widetilde{X_0}\subseteq\widetilde{\P^{2n}}$, where $\widetilde{\P^{2n}}:=\Bl_{\{x_0\}}(\P^{2n})$ with exceptional divisor $\P^{2n-1}\subseteq\widetilde{\P^{2n}}$ and $D\subseteq\P^{2n-1}$.

Let $\Lambda=\underline{\Lambda}$ be a constant sheaf with finite stalks of order prime to $\mathrm{char}(k)$. We note that $\widetilde{X_0}\setminus D\isomto X_0\setminus\{x_0\}$ is an isomorphism so that we have a residue map 
\[
Res\colon\Het^i(X_0\setminus\{x_0\},\Lambda)\isomto\Het^i(\widetilde X_0\setminus D,\Lambda)\to\Het^{i-1}(D,\Lambda).
\]

\begin{lemma}\label{rank 1}
We have $\Het^{2n}(\widetilde{X_0}\setminus D,\Lambda)\cong\begin{cases}\Lambda&\text{ if }n>1;\\ 0&\text{ if }n=1.\end{cases}$
\end{lemma}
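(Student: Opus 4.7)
Plan: First I would reduce the problem to a Gysin computation. Applying the Gysin long exact sequence of the smooth pair $(\widetilde{X_0},D)$ in degree $2n$ gives
\[
\Het^{2n-2}(D,\Lambda)\xrightarrow{i_*}\Het^{2n}(\widetilde{X_0},\Lambda)\xrightarrow{j^*}\Het^{2n}(\widetilde{X_0}\setminus D,\Lambda)\xrightarrow{Res}\Het^{2n-1}(D,\Lambda),
\]
and since $D$ is a smooth projective quadric of even dimension $2n-2$, its odd étale cohomology vanishes, so $\Het^{2n-1}(D,\Lambda)=0$ and it suffices to compute $\coker(i_*)$.

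The case $n=1$ is direct: here $D$ is a $0$-dimensional smooth quadric (two reduced points), so $\Het^0(D,\Lambda)=\Lambda^2$, while $\widetilde{X_0}$ is a connected smooth projective curve with $\Het^2(\widetilde{X_0},\Lambda)=\Lambda$ generated by the class of a point. The Gysin map sends each of the two points to this generator, hence is surjective, and $\coker(i_*)=0$.

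For $n\geq 2$, I would use Poincaré duality (as in \Cref{Gysin map}) to identify $\coker(i_*)^\vee\cong\ker(i^*\colon\Het^{2n-2}(\widetilde{X_0},\Lambda)\to\Het^{2n-2}(D,\Lambda))$, and produce a convenient basis of $\Het^{2n-2}(\widetilde{X_0},\Lambda)$. By the Lefschetz hyperplane theorem applied to the ample divisor $\widetilde{X_0}\subseteq\widetilde{\P^{2n}}$ (as in the proof of \Cref{extendable singular point}), restriction $\Het^{2n-2}(\widetilde{\P^{2n}},\Lambda)\xrightarrow{\sim}\Het^{2n-2}(\widetilde{X_0},\Lambda)$ is an isomorphism, and combining this with the blow-up formula for $\widetilde{\P^{2n}}=\Bl_{\{x_0\}}\P^{2n}$ and base change for the transverse intersection $\widetilde{X_0}\cap E=D$ (with $E$ the exceptional divisor) gives a basis $\{H^{n-1},\,i_*(h_D^{n-2})\}$ of $\Het^{2n-2}(\widetilde{X_0},\Lambda)\cong\Lambda^2$. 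Here $H$ is the pullback of the hyperplane class from $\P^{2n}$ and $h_D$ is the hyperplane class on $D$ inherited from $E\cong\P^{2n-1}$.

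I would then compute $i^*$ on this basis: since $H$ is pulled back from $\P^{2n}$ and $E$ contracts to $x_0$, the class $H$ restricts trivially to $D$, so $i^*(H^{n-1})=0$; for the other basis element, the projection formula yields $i^*i_*(h_D^{n-2})=h_D^{n-2}\cdot c_1(N_{D/\widetilde{X_0}})$, and since $\widetilde{X_0}$ meets $E$ transversely with $N_{E/\widetilde{\P^{2n}}}=\Oc_E(-1)$, the normal bundle restricts to $\Oc_D(-1)$, yielding $i^*i_*(h_D^{n-2})=-h_D^{n-1}$. Because $h_D^{n-1}$ is nonzero in $\Het^{2n-2}(D,\Lambda)=\Lambda^2$ (it equals the sum of the two ruling classes on the even-dimensional smooth quadric $D$), $\ker(i^*)$ is the rank-one submodule generated by $H^{n-1}$, and dualising gives $\coker(i_*)\cong\Lambda$. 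The main technical step is the normal-bundle/transverse-intersection computation combined with the non-vanishing of $h_D^{n-1}$; these inputs are exactly what separate the $n\geq 2$ answer from the $n=1$ answer.
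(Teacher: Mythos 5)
Your proof is correct, but it follows a genuinely different route from the paper's. The paper first identifies $\Het^{2n}(\widetilde{X_0}\setminus D,\Lambda)\cong\Het^{2n}(X_0\setminus\{x_0\},\Lambda)$ and then works directly with the singular hypersurface $X_0$: for $n>1$, the weak Lefschetz theorem applied to $X_0\subseteq\P^{2n}$ gives $\Het^{2n-2}(X_0,\Lambda)\cong\Lambda$, the compact-support localization sequence at the point $x_0$ gives $\mrH_c^{2n-2}(X_0\setminus\{x_0\},\Lambda)\isomto\Het^{2n-2}(X_0,\Lambda)$, and Poincar\'e duality on the smooth non-proper variety $X_0\setminus\{x_0\}$ finishes the computation in two lines (the $n=1$ case is the same pattern in degree $0$). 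You instead stay on the resolution $\widetilde{X_0}$, run the Gysin sequence of $(\widetilde{X_0},D)$, dualize $i_*$ to $i^*$, build a basis of $\Het^{2n-2}(\widetilde{X_0},\Lambda)$ from the blow-up formula for $\widetilde{\P^{2n}}$, and compute $i^*$ via the self-intersection formula and the normal-bundle identification $N_{D/\widetilde{X_0}}\cong\Oc_D(-1)$. Both arguments work; yours is closer in spirit to the paper's proofs of \Cref{extendable singular point} and \Cref{rank 2} and makes the geometry of $D\subseteq\widetilde{X_0}$ explicit, but it needs several extra ingredients (ampleness of $\widetilde{X_0}$ in $\widetilde{\P^{2n}}$, transversality of $\widetilde{X_0}$ and $E$, non-vanishing of $h_D^{n-1}$ and that its annihilator in $\Lambda$ is trivial) that the paper's shorter argument entirely sidesteps by exploiting that $X_0\setminus\{x_0\}$ is already smooth.
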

\begin{proof}
Suppose $n=1$, then the localization sequence gives $0\to\mrH_c^0(X_0\setminus\{x_0\},\Lambda)\to\Het^0(X_0,\Lambda)\isomto\Het^0(x_0,\Lambda)$. So $\Het^2(\tilde X_0\setminus D,\Lambda)\cong\Het^2(X_0\setminus\{x_0\},\Lambda)\cong\mrH_c^0(X_0\setminus\{x_0\},\Lambda)^\vee=0$.

Now suppose $n>1$, then the Lefschetz Hyperplane Theorem \cite[Corollary I.9.4]{FK} gives that $\Lambda\cong\Het^{2n-2}(\P^{2n},\Lambda)\isomto\Het^{2n-2}(X_0,\Lambda)$. Using the localization exact sequence again now gives that $\mrH^{2n-2}_c(X_0\setminus\{x_0\},\Lambda)\isomto\Het^{2n-2}(X_0,\Lambda)$. So by Poincaré Duality we conclude that $\Het^{2n}(\widetilde X_0\setminus D,\Lambda)\cong \Het^{2n}(X_0\setminus\{x_0\},\Lambda)\cong \Hom_\Lambda(\mrH^{2n-2}_c(X_0\setminus\{x_0\},\Lambda),\Lambda)\cong\Lambda$.
\end{proof}

\begin{lemma}\label{rank 2}
We have $\Het^{2n}(\widetilde{X_0},\Lambda)\cong\begin{cases}\Lambda^{\oplus 2}&\text{ if }n>1;\\ \Lambda&\text{ if }n=1.\end{cases}$
\end{lemma}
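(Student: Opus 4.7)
The plan is to follow the embedded blow-up strategy already employed in the proof of \Cref{extendable singular point}, but this time accessing the middle-dimensional cohomology of $\widetilde{X_0}$ via Poincaré duality. First, since $\widetilde{X_0}$ is a smooth projective variety of dimension $2n-1$ over the algebraically closed field $k$ and $\Lambda$ is finite of order prime to $\mathrm{char}(k)$, Poincaré duality gives
\[
\Het^{2n}(\widetilde{X_0},\Lambda)\cong\Het^{2n-2}(\widetilde{X_0},\Lambda)^\vee,
\]
so it suffices to compute $\Het^{2n-2}(\widetilde{X_0},\Lambda)$.

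Next, as recalled in the proof of \Cref{extendable singular point}, by \cite[Lemma 3.10]{Lindner} the embedded $\widetilde{X_0}\subseteq\widetilde{\P^{2n}}$ is an ample divisor. Since $2n-2<2n-1=\dim\widetilde{X_0}$, the Lefschetz hyperplane theorem (\cite[Corollary I.9.4]{FK}) yields an isomorphism $\Het^{2n-2}(\widetilde{\P^{2n}},\Lambda)\isomto\Het^{2n-2}(\widetilde{X_0},\Lambda)$. To compute the left-hand side I would invoke the blow-up formula for $\widetilde{\P^{2n}}=\Bl_{\{x_0\}}(\P^{2n})$, namely
\[
\Het^{i}(\widetilde{\P^{2n}},\Lambda)\cong\Het^i(\P^{2n},\Lambda)\oplus\bigoplus_{j=1}^{2n-1}\Het^{i-2j}(\mathrm{pt},\Lambda).
\]
At $i=2n-2$ the first summand contributes one copy of $\Lambda$, while the sum contributes an additional $\Lambda$ precisely when there exists $j\in\{1,\dots,2n-1\}$ with $2n-2-2j=0$, i.e.\ $j=n-1$; this occurs iff $n\geq 2$. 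Combined with the Poincaré duality isomorphism above this yields $\Het^{2n}(\widetilde{X_0},\Lambda)\cong\Lambda^{\oplus 2}$ when $n>1$ and $\Het^{2}(\widetilde{X_0},\Lambda)\cong\Lambda$ when $n=1$, as desired.

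The only mild obstacle is that for $n=1$ the argument tacitly uses that $\widetilde{X_0}$ is connected; but this comes for free, since the same Lefschetz theorem applied in degree $0$ to the connected smooth surface $\widetilde{\P^2}$ shows $\Het^0(\widetilde{X_0},\Lambda)=\Lambda$, after which Poincaré duality recovers $\Het^2(\widetilde{X_0},\Lambda)=\Lambda$. Beyond this the proof is a direct combination of standard étale-cohomology tools — Poincaré duality, Lefschetz hyperplane, and the blow-up formula for a point — all of which are already used elsewhere in the paper.
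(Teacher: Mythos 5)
Your argument is correct and follows essentially the same route as the paper: Poincaré duality to reduce to degree $2n-2$, Lindner's ampleness of $\widetilde{X_0}\subseteq\widetilde{\P^{2n}}$ together with the Lefschetz hyperplane theorem to transfer the computation to $\widetilde{\P^{2n}}$, and then the blow-up formula. The paper simply dispenses with the $n=1$ case directly via $\Het^2(\widetilde{X_0},\Lambda)\cong\Het^0(\widetilde{X_0},\Lambda)^\vee\cong\Lambda$, whereas you run both cases through the same blow-up computation and make the connectedness of $\widetilde{X_0}$ explicit via Lefschetz in degree $0$ — a minor stylistic difference, not a different proof.
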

\begin{proof}
When $n=1$ we see that $\Het^2(\widetilde{X_0},\Lambda)\cong\Het^0(\widetilde{X_0},\Lambda)^\vee\cong\Lambda$.

Suppose $n>1$, using Poincaré Duality again we have $\Het^{2n}(\widetilde{X_0},\Lambda)\cong\Hom_{\Lambda}(\Het^{2n-2}(\widetilde X_0,\Lambda),\Lambda)$, so it suffices to show that $\Het^{2n-2}(\widetilde{X_0},\Lambda)\cong\Lambda^{\oplus 2}$. As the degree of $X_0$ is at least 3, then by \cite[Lemma 3.10]{Lindner} $\widetilde{X_0}$ is an ample divisor of $\widetilde{\P^{2n}}$, so that $\Het^{2n-2}(\widetilde{\P^{2n}},\Lambda)\to\Het^{2n-2}(\widetilde{X_0},\Lambda)$ is an isomorphism. So the result follows from the blow-up formula.
\end{proof}

\begin{prop}\label{non-zero residue}
The residue map $\Het^{2n-1}(\widetilde{X_0}\setminus D,\Lambda)\to\Het^{2n-2}(D,\Lambda)$ is non-zero.
\end{prop}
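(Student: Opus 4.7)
My plan is to apply the Gysin long exact sequence for the smooth pair $(\widetilde{X_0},D)$ (with $D$ of codimension~$1$), which reads
\[
\Het^{2n-1}(\widetilde{X_0},\Lambda)\to\Het^{2n-1}(\widetilde{X_0}\setminus D,\Lambda)\overset{Res}{\to}\Het^{2n-2}(D,\Lambda)\overset{i_*}{\to}\Het^{2n}(\widetilde{X_0},\Lambda)\to\Het^{2n}(\widetilde{X_0}\setminus D,\Lambda)\to\Het^{2n-1}(D,\Lambda).
\]
By exactness, it is enough to show that $i_*$ has non-trivial kernel, and this is what I will establish through a cardinality count at the tail of this sequence.

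The key input will be the cohomology of the even-dimensional smooth quadric $D$ of dimension $2n-2$ (two reduced points when $n=1$). From the Bruhat/Schubert cell decomposition of an even-dimensional smooth quadric one obtains $\Het^{2n-2}(D,\Lambda)\cong\Lambda^{\oplus 2}$ and $\Het^{2n-1}(D,\Lambda)=0$. Combining this with \Cref{rank 1} and \Cref{rank 2}, the tail of the above sequence becomes
\[
\Lambda^{\oplus 2}\overset{i_*}{\to}\Het^{2n}(\widetilde{X_0},\Lambda)\twoheadrightarrow\Het^{2n}(\widetilde{X_0}\setminus D,\Lambda),
\]
where the right-hand surjection takes the form $\Lambda^{\oplus 2}\twoheadrightarrow\Lambda$ when $n>1$ and $\Lambda\twoheadrightarrow 0$ when $n=1$. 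In either case its kernel, which by exactness is the image of $i_*$, has order $|\Lambda|$. A cardinality comparison with the domain $\Het^{2n-2}(D,\Lambda)$, of order $|\Lambda|^2$, then forces $\ker(i_*)\neq 0$, so that $Res$ is non-zero.

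The main obstacle is really just the input $\Het^{2n-2}(D,\Lambda)\cong\Lambda^{\oplus 2}$ for the exceptional quadric, but this can be read off directly from its cell decomposition; alternatively, one may restrict from the ambient $\P^{2n-1}\cong E\subseteq\widetilde{\P^{2n}}$ via the Lefschetz hyperplane theorem in degrees below the middle, use Poincar\'e duality in degrees above, and pin down the single remaining middle degree by an Euler-characteristic computation.
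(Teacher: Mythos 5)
Your proposal is correct and takes essentially the same approach as the paper: both proofs plug the computations of \Cref{rank 1}, \Cref{rank 2} and the cohomology of a smooth quadric into the localization sequence for $(\widetilde{X_0},D)$ and deduce that $\ker(i_*)\neq 0$. The paper argues this by treating the cases $n=1$ and $n>1$ separately, whereas your cardinality count on the image of $i_*$ handles both cases at once, which is a minor streamlining of the same argument.
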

\begin{proof}
Consider the long exact sequence
\[
\Het^{2n-1}(\widetilde{X_0}\setminus D,\Lambda)\overset{Res}{\to}\Het^{2n-2}(D,\Lambda)\to\Het^{2n}(\widetilde{X_0},\Lambda)\to\Het^{2n}(\widetilde{X_0}\setminus D,\Lambda)\to\Het^{2n-1}(D,\Lambda).
\]
Plugging in the computations from \Cref{rank 1} and \Cref{rank 2} and the cohomology of a smooth quadric (\cite[Proposition III.4.4]{FK}) gives
\begin{align*}
\Het^{2n-1}(\widetilde{X_0}\setminus D,\Lambda)\overset{Res}{\to}\Lambda^{\oplus 2}\to\Lambda\to0\to 0&\quad\text{ if } n=1;\\
\Het^{2n-1}(\widetilde{X_0}\setminus D,\Lambda)\overset{Res}{\to}\Lambda^{\oplus 2}\to\Lambda^{\oplus 2}\to\Lambda\to0&\quad\text{ if }n>1.
\end{align*}
So in the case $n=1$, the residue map clearly cannot be zero. If the residue map were zero in the second case, then the second map would be an isomorphism, implying that the last surjection is zero, a contradiction. \qedhere


\end{proof}

\begin{prop}\label{Existence Non-zero ResidueSp}
Let $\mathcal X\to B$ be a Lefschetz pencil with singular fibre $X_0$. There exists a finite extension $B'\to B$ and $\beta'\in\Het^{2n-1}(X_{\eta'},\Lambda)$ so that $Res(sp_{0'}(\beta'))\neq 0$ in $\Het^{2n-2}(D,\Lambda)$ for some $0'\in B'$ mapping to $0\in B$.
\end{prop}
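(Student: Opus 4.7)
The plan is to deduce the claim by combining \Cref{non-zero residue} with \Cref{comparison-Lefschetz}. First, by \Cref{non-zero residue}, fix a class $\alpha \in \Het^{2n-1}(\widetilde{X_0}\setminus D, \Lambda) = \Het^{2n-1}(X_0 \setminus \{x_0\}, \Lambda)$ with $Res(\alpha) \neq 0$ in $\Het^{2n-2}(D,\Lambda)$. The isomorphism $(f_{0'})_\ast$ in \Cref{comparison-Lefschetz} is induced by an isomorphism of varieties and therefore commutes with $Res$, so producing $\beta'$ with $Res(sp_{0'}(\beta'))\neq 0$ is equivalent to producing $\beta'$ with $Res((f_{0'})_\ast sp_{0'}(\beta'))\neq 0$. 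Using \Cref{comparison-Lefschetz} in degree $i=2n-1\leq\dim X_0$, together with the standard identification $\Het^{2n-1}(X_{\bar\eta}, \Lambda) = \varinjlim_{\eta'/\eta}\Het^{2n-1}(X_{\eta'}, \Lambda)$ over finite separable extensions (which lets one descend any geometric class back to a finite cover $B'\to \P^1$), the proposition reduces to showing that the composition
\[
Res\circ cosp^\vee\colon \Het^{2n-1}(X_{\bar\eta},\Lambda)\to \Het^{2n-2}(D,\Lambda)
\]
is non-zero.

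To produce such a class, the cleanest route is to prove that $cosp^\vee$ itself is surjective, after which \Cref{non-zero residue} supplies the desired class. By \Cref{def dual cosp} and Poincaré duality, surjectivity of $cosp^\vee$ is equivalent to injectivity of $cosp\colon \mrH_c^{2n-1}(X_0\setminus\{x_0\},\Lambda)\to \mrH_c^{2n-1}(X_{\bar\eta},\Lambda)$. The compactly-supported localization sequence for $\{x_0\}\subset X_0$, together with $\Het^{\geq 1}(\{x_0\},\Lambda)=0$ and the properness of $X_0$, yields an isomorphism $\mrH_c^{2n-1}(X_0\setminus\{x_0\},\Lambda)\cong \Het^{2n-1}(X_0,\Lambda)$, and \Cref{relation cosp} identifies the compactly-supported cospecialization with the classical cospecialization $\Het^{2n-1}(X_0,\Lambda)\to \Het^{2n-1}(X_{\bar\eta},\Lambda)$ of the proper family $\mathcal X\to\P^1$. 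Injectivity of this classical map in the middle degree is the content of Picard--Lefschetz theory: the vanishing cycle complex $R\Phi\Lambda$ at an ordinary double point of relative dimension $2n-1$ is concentrated in degree $2n-1$ with a rank-one stalk at $x_0$, so the nearby cycles long exact sequence places the vanishing cycle only in the cokernel of $cosp$ in degree $2n-1$, forcing injectivity.

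The main obstacle is precisely this last step, namely the rigorous justification of the injectivity of the classical cospecialization via the Picard--Lefschetz/nearby cycles formalism for an ODP degeneration. Once granted, the remaining pieces---lifting $\alpha$ to some $\gamma \in \Het^{2n-1}(X_{\bar\eta},\Lambda)$, descending $\gamma$ to a class $\beta'\in\Het^{2n-1}(X_{\eta'},\Lambda)$ on a finite separable extension, unwinding the commuting diagram of \Cref{comparison-Lefschetz}, and invoking the compatibility of $(f_{0'})_\ast$ with $Res$---are routine applications of the machinery assembled in the preceding sections.
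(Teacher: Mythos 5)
Your proposal is correct and follows essentially the same route as the paper's own proof: reduce via \Cref{comparison-Lefschetz} and \Cref{refined-geometric-point} to exhibiting a class in $\Het^{2n-1}(X_{\bar\eta},\Lambda)$ with non-zero residue under $cosp^\vee$, obtain surjectivity of $cosp^\vee$ by dualizing the injectivity of the classical cospecialization $\Het^{2n-1}(X_0,\Lambda)\to\Het^{2n-1}(X_{\bar\eta},\Lambda)$ (which you correctly identify as the Picard--Lefschetz input; the paper cites it as \cite[Theorem III.4.3(1)]{FK}), then invoke \Cref{non-zero residue}. The only presentational difference is that the paper takes this injectivity as a cited black box where you sketch its nearby-cycles justification, which is the same statement.
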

\begin{proof}
By \cite[Theorem III.4.3(1)]{FK} we have that $cosp\colon\Het^{2n-1}(X_0,\Lambda)\to\Het^{2n-1}(X_{\bar\eta},\Lambda)$ is injective (with cokernel generated by the vanishing sphere). Using \Cref{relation cosp} this implies that $cosp\colon\mrH_c^{2n-1}(X_0\setminus \{x_0\},\Lambda)\isomto\Het^{2n-1}(X_0,\Lambda)\to\Het^{2n-1}(X_{\bar\eta},\Lambda)$ is injective. As $\Lambda$ is an injective $\Lambda$-module, the dual $cosp^\vee\colon\Het^{2n-1}(X_{\bar\eta},\Lambda)\to\Het^{2n-1}(X_0\setminus \{x_0\},\Lambda)$ is surjective (with kernel generated by the vanishing sphere). Thus by \Cref{non-zero residue}, there exists a $\beta\in\Het^{2n-1}(X_{\bar\eta},\Lambda)$ such that $cosp^\vee(\beta)$ has non-zero residue.

Let $\beta'\in\Het^{2n-1}(X_{\eta'},\Lambda)$ representing $\beta$ for some finite (separable) extension $\eta'\to\eta$. Then by \Cref{comparison-Lefschetz} we conclude that $sp_{0'}(\beta')=cosp^\vee(\beta)$ has non-zero residue.
\end{proof}

\subsection{Non-Vanishing Refined Unramified Cohomology}

We start with the computation of the cohomology of an Enriques surface.
\begin{prop}\label{cohom-Enriques}
Let $S$ be an Enriques surface over an algebraically closed field $k$ and let $\ell\neq \mathrm{char}(k)$ be a prime. For $\ell\neq 2$ we have
\begin{align*}
\Het^0(S,\mu_{\ell^r})&\cong\Z/\ell^r\Z\cong\Het^4(S,\mu_{\ell^r}) & \Het^0(S,\Z_\ell)&\cong\Z_\ell\cong\Het^4(S,\Z_\ell);\\
\Het^1(S,\mu_{\ell^r})&=0=\Het^3(S,\mu_{\ell^r}) & \Het^1(S,\Z_\ell)&=0=\Het^3(S,\Z_\ell);\\
\Het^2(S,\mu_{\ell^r})&\cong(\Z/\ell^r\Z)^{\oplus 10} & \Het^2(S,\Z_\ell)&\cong\Z_\ell^{\oplus 10}.
\end{align*}
If $\ell=2$, we have
\begin{align*}
\Het^0(S,\mu_{2^r})&\cong\Z/2^r\Z\cong\Het^4(S,\mu_{2^r}) & \Het^0(S,\Z_2)&\cong\Z_2\cong\Het^4(S,\Z_2);\\
\Het^1(S,\mu_{2^r})&=\Z/2\Z & \Het^1(S,\Z_2)&=0;\\
\Het^2(S,\mu_{2^r})&\cong(\Z/2^r\Z)^{\oplus 10}\oplus(\Z/2\Z)^{\oplus 2} & \Het^2(S,\Z_2)&\cong\Z_2^{\oplus 10}\oplus\Z/2\Z;\\
\Het^3(S,\mu_{2^r})&=\Z/2\Z & \Het^3(S,\Z_2)&=\Z/2\Z.
\end{align*}
\end{prop}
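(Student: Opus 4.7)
The strategy is to combine the Kummer exact sequence with standard structural facts about Enriques surfaces and Poincaré duality, then pass to $\Z_\ell$-coefficients by an inverse limit. The essential inputs, each of which is standard, are: $S$ is a smooth projective connected surface of dimension $2$, so $\Het^0(S,-)$ and $\Het^4(S,-)$ are immediate from connectedness and Poincaré duality; $\Pic(S) \cong \Z^{\oplus 10} \oplus \Z/2\Z$, with the torsion generated by the canonical class; $\pi_1^{\e t}(S) \cong \Z/2\Z$ (valid since $\mathrm{char}(k) \neq 2$, via the K3 double cover); and $\mathrm{Br}(S) \cong \Z/2\Z$ (e.g.\ by a theorem of Beauville).

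With these in hand, the Kummer sequence $1 \to \mu_{\ell^r} \to \G_m \to \G_m \to 1$ immediately gives $\Het^1(S, \mu_{\ell^r}) \cong \Pic(S)[\ell^r]$ together with the short exact sequence
\[
0 \to \Pic(S)/\ell^r \to \Het^2(S, \mu_{\ell^r}) \to \mathrm{Br}(S)[\ell^r] \to 0.
\]
Plugging in the structural results yields the claimed $\Het^1$ and the underlying $\Het^2$; the only extra point to verify is that for $\ell = 2$ the above extension is split, producing the extra $\Z/2\Z$ summand in $\Het^2(S, \mu_{2^r})$. I would check this either by inspecting Hochschild–Serre for the K3 double cover, or by exhibiting a lift of $\mathrm{Br}(S)[2]$ directly. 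For $\Het^3$ and $\Het^4$, Poincaré duality for $S$ smooth projective over $\bar k$ gives $\Het^{4-i}(S, \mu_{\ell^r}) \cong \Het^i(S, \mu_{\ell^r})^\vee$ (with Tate twists trivialized via $\mu_{\ell^r}(\bar k) \cong \Z/\ell^r\Z$), producing the remaining groups.

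To pass to $\Z_\ell$-coefficients, I use $\Het^i(S, \Z_\ell) = \varprojlim_r \Het^i(S, \mu_{\ell^r})$. The transitions $\Het^i(S, \mu_{\ell^{r+1}}) \to \Het^i(S, \mu_{\ell^r})$, induced by $\mu_{\ell^{r+1}} \to \mu_{\ell^r}$, $x \mapsto x^\ell$, act as natural reduction on the part arising from $\Pic(S)/\ell^r$ (giving $\Pic(S) \otimes \Z_\ell$ in the limit), and as multiplication by $\ell$ on the torsion factors coming from $\Pic(S)[\ell^r]$ and $\mathrm{Br}(S)[\ell^r]$. For $\ell = 2$, multiplication by $2$ annihilates $\Z/2\Z$, so the Brauer contribution in $\Het^2$ and the Picard-torsion contribution in $\Het^1$ both vanish in the limit; this explains why $\Het^1(S, \Z_2) = 0$ while $\Het^2(S, \Z_2) = \Z_2^{\oplus 10} \oplus \Z/2\Z$. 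The $\Z/2\Z$ in $\Het^3(S, \Z_2)$ survives because Poincaré duality on $\Z_\ell$-cohomology pairs torsion in $\Het^i$ with torsion in $\Het^{5-i}$, matching the $\Z/2\Z$ torsion in $\Het^2$.

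The main obstacle, I expect, is carefully tracking the transitions in the inverse system and justifying the extension class for $\ell = 2$. A conceptually simpler alternative would be to lift $S$ to characteristic zero (possible since $\mathrm{char}(k) \neq 2$, using smoothness of the moduli of Enriques surfaces over $\Z[1/2]$), apply smooth and proper base change, and reduce to the classical computation of the singular cohomology of a complex Enriques surface, which is available in standard references.
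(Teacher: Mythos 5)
Your overall strategy matches the paper's closely: compute $\Het^0,\Het^4$ trivially, deduce $\Het^1$ and $\Het^2$ from the Kummer sequence together with $\Pic(S)\cong\Z^{\oplus 10}\oplus\Z/2\Z$ and $\mathrm{Br}(S)\cong\Z/2\Z$, get $\Het^3$ by Poincar\'e duality, and pass to $\Z_\ell$ by taking the inverse limit over the Kummer transition maps. Your analysis of the transitions (natural reduction on $\Pic(S)/\ell^r$, multiplication by $\ell$ on $\Pic(S)[\ell^r]$ and $\mathrm{Br}(S)[\ell^r]$) is correct and essentially what the paper does. Your derivation of $\Het^3(S,\Z_2)\cong\Z/2\Z$ via torsion duality is different from the paper's (which uses the Bockstein long exact sequence for $0\to\mu_2\to\mu_{2^r}\to\mu_{2^{r-1}}\to 0$ and surjectivity of the transition maps), but both are valid; your route additionally needs the observation that $\Het^3(S,\Z_2)$ has no free part (e.g.\ $b_3=b_1=0$), which you should make explicit.

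The genuine gap is the splitting of $0\to\Pic(S)/2^r\to\Het^2(S,\mu_{2^r})\to\mathrm{Br}(S)[2]\to 0$, which you flag as ``the only extra point to verify'' and ``the main obstacle'' but do not actually establish. This is the single nontrivial assertion in the proposition: a priori the middle term could be $(\Z/2^r\Z)^{\oplus 10}\oplus\Z/4\Z$ rather than $(\Z/2^r\Z)^{\oplus 10}\oplus(\Z/2\Z)^{\oplus 2}$, and everything downstream (the computation of $\Het^2(S,\mu_{2^r})$ and hence the existence of a mod-$2$ class not lifting to $\mu_4$, which the paper uses in \Cref{cohom-class-Enriques}) depends on resolving it. Neither of your proposed fixes is carried out. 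The paper's own argument is a concrete one: assuming the extension is nonsplit for some $r\geq 2$, it shows the resulting $\Z/4\Z$ summand cannot be compatible with the Kummer transition $\Het^2(S,\mu_{2^r})\to\Het^2(S,\mu_{2^{r-1}})$, which is the identity on the torsion of $\Pic(S)/2^r$ but zero on $\mathrm{Br}(S)[2^r]$ — a short diagram-chase contradiction. Your alternative of lifting to characteristic zero and invoking the known singular cohomology of a complex Enriques surface would indeed bypass the entire Kummer computation (via the universal coefficient theorem the splitting is automatic over $\C$) and is arguably cleaner, but as written it remains a sketch rather than a proof; if you go that route, you should cite a specific lifting result for Enriques surfaces in characteristic $p\neq 2$.
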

\begin{proof}
By the assumptions on $\ell$ and $k$, we see that $\Het^0(S,\mu_{\ell^r})=\Gamma(S,\mu_{\ell^r})\cong\Z/\ell^r\Z$ and $\int_S\colon\Het^4(S,\mu_{\ell^r})\isomto\Z/\ell^r\Z$. From this, the $\Z_\ell$ coefficients also follow.

We will use the Kummer sequence $0\to\mu_{\ell^r}\to\G_m\overset{(-)^{\ell^r}}{\to}\G_m\to 0$ to compute the remaining cohomological degrees. By assumption on $k$, the short exact Kummer sequence induces the exact sequences
\[
0\to \Pic(S)/\ell^r \Pic(S)\to\Het^2(S,\mu_{\ell^r})\to\Het^2(S,\G_m)[\ell^r]\to0
\]
and
\[
0\to\Het^1(S,\mu_{\ell^r})\to\Pic(S)[\ell^r]\to0.
\]

First we claim that $\Pic(S)\cong\Z^{\oplus 10}\oplus\Z/2\Z$, which gives the torsion cohomology in degree 1, and in turn degree 3 via Poincaré Duality. For the claim, use \cite[Theorem 1.2.7]{Enr-Surf} giving that $rk(NS(S))=\rho(S)=10$, where $NS(S)=\Pic(S)/\Pic^0(S)$. And Theorem 1.2.1 of loc. cit. gives $\Pic^0(S)=0$, hence we conclude that $\Pic(S)$ is of rank 10. Moreover, in the proof of Theorem 1.2.1 it is given that $Tors(\Pic(S))=\Z/2\Z$, generated by the canonical divisor, proving the claim.

By \cite[Theorem 1.2.17]{Enr-Surf}, we have that $Br(S)\cong\Z/2\Z$, implying that $\Het^2(S,\G_m)[\ell^r]=\begin{cases}0 &\text{ if }\ell\neq 2\\ \Z/2\Z&\text{ if } \ell=2\end{cases}$. So if $\ell\neq 2$, we have a full description of the torsion cohomology. Also, combining the commutative diagram
\[
\begin{tikzcd}
0\ar[r]&\mu_{\ell^r}\ar[r]\ar[d,"(-)^\ell"]&\G_m\ar[r,"(-)^{\ell^r}"]\ar[d,"(-)^\ell"]&\G_m\ar[r]\ar[d,"\id"]&0\\
0\ar[r]&\mu_{\ell^{r-1}}\ar[r]&\G_m\ar[r,"(-)^{\ell^{r-1}}"]&\G_m\ar[r]&0\\
\end{tikzcd}
\]
with the long exact sequences gives the transition morphisms in this case, computing the $\ell$-adic cohomology.

For $\ell=2$ we obtain the short exact sequence
\[
0\to(\Z/2^r\Z)^{\oplus 10}\oplus\Z/2\Z\to\Het^2(S,\mu_{2^r})\to\Z/2\Z\to 0,
\]
and we claim that this sequence splits for every $r$. Clearly it does for $r=1$, so we assume $r\geq 2$. As $\mathrm{Ext}^1_{\Z/2^r\Z}((\Z/2^r\Z)^{\oplus 10}\oplus \Z/2\Z,\Z/2\Z)\cong\Z/2\Z$, there are only two options, either the sequence splits, or the middle term equals $(\Z/2^r\Z)^{\oplus 10}\oplus\Z/4\Z$. Suppose for some $r$ the term is $\Z/4\Z$, then using the above commutative diagram, we would then have a commutative diagram
\[
\begin{tikzcd}
0\ar[r]&\Z/2\Z\ar[r]\ar[d,"\id"]&\Z/4\Z\ar[r]\ar[d,"f"]&\Z/2\Z\ar[r]\ar[d,"0"]&0\\
0\ar[r]&\Z/2\Z\ar[r]&A\ar[r]&\Z/2\Z\ar[r]&0\\
\end{tikzcd},
\]
where $A\in\{(\Z/2\Z)^{\oplus2},\Z/4\Z\}$. But one can check that for both options for $A$, such an $f$ making the diagram commute does not exist. So we conclude that the sequence must split for every $r$ and thus $\Het^2(S,\mu_{2^r})\cong(\Z/2^r\Z)^{\oplus 10}\oplus(\Z/2\Z)^{\oplus 2}$. And in turn $\Het^2(S,\Z_2)\cong\Z_2^{\oplus 10}\oplus\Z/2\Z$.

The first paragraph implies that $\Het^1(S,\Z_2)=0$. For $\Het^3(S,\Z_2)$ we use the long exact Bockstein sequence $0\to\mu_{2}\to\mu_{2^r}\overset{(-)^2}{\to}\mu_{2^{r-1}}\to0$ to conclude that $\Het^3(S,\mu_{2^r})\cong\Z/2\Z\to\Z/2\Z\cong\Het^3(S,\mu_{2^{r-1}})$ is the identity.
\end{proof}

\begin{cor}\label{cohom-class-Enriques}
Let $S$ be an Enriques surface over an algebraically closed field $k$ with $\mathrm{char}(k)\neq 2$. Then there exists an element $\alpha\in\Het^2(S,\mu_2)$ that does not lift to $\mu_4$ coefficients. In particular $k(\alpha)\neq0$ in $\Het^2(k(S),\mu_2)$.
\end{cor}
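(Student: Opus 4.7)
The plan is to combine the computations of \Cref{cohom-Enriques} with the functorial comparison of the Kummer sequences for $\mu_2$ and $\mu_4$. Applying the map of short exact sequences displayed in the proof of \Cref{cohom-Enriques} (with middle vertical $(-)^2$ on $\G_m$ and right vertical the identity) to the induced long exact sequences in cohomology, one obtains the commutative diagram
\[
\begin{tikzcd}
0 \ar[r] & \Pic(S)/4 \ar[r] \ar[d, two heads] & \Het^2(S,\mu_4) \ar[r] \ar[d] & Br(S)[4] \ar[r] \ar[d, "{\cdot 2}"] & 0 \\
0 \ar[r] & \Pic(S)/2 \ar[r] & \Het^2(S,\mu_2) \ar[r] & Br(S)[2] \ar[r] & 0
\end{tikzcd}
\]
with exact rows, where the right vertical is multiplication by $2$ because it is induced from $(-)^2\colon\G_m\to\G_m$, which acts as multiplication by $2$ on $\Het^2(S,\G_m)$. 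Since $Br(S)\cong\Z/2\Z$, this right vertical vanishes.

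A short diagram chase then shows that the image of the middle reduction map equals $\Pic(S)/2\subseteq\Het^2(S,\mu_2)$: the inclusion $\subseteq$ follows because the composite $\Het^2(S,\mu_4)\to\Het^2(S,\mu_2)\to Br(S)[2]$ is zero, so the image lies in the kernel $\Pic(S)/2$; the reverse inclusion follows from surjectivity of $\Pic(S)/4\to\Pic(S)/2$. Using $\Pic(S)\cong\Z^{\oplus 10}\oplus\Z/2\Z$, this image has order $2^{11}$ and thus index $2$ in $\Het^2(S,\mu_2)\cong(\Z/2\Z)^{\oplus 12}$. Any $\alpha\in\Het^2(S,\mu_2)$ outside this image witnesses the first claim.

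For the second claim I identify $\ker(\Het^2(S,\mu_2)\to\Het^2(k(S),\mu_2))$. By \Cref{les Sch} taken at $i=2$ and $j=1$, together with the identification $\mrH^2(F_0 S,\mu_2)=\Het^2(k(S),\mu_2)$, this kernel is the image of $\bigoplus_{x\in S^{(1)}}\Het^0(\kappa(x),\mu_2)\to\Het^2(S,\mu_2)$, which coincides with the cycle class map from divisors and therefore factors through the inclusion $\Pic(S)/2\hookrightarrow\Het^2(S,\mu_2)$ given by the Kummer sequence. Hence the kernel equals $\Pic(S)/2$, the very subgroup identified above. Since $\alpha\notin\Pic(S)/2$ by construction, $k(\alpha)\neq 0$ in $\Het^2(k(S),\mu_2)$.

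The only nontrivial technical point is the identification of the right vertical arrow $Br(S)[4]\to Br(S)[2]$ as multiplication by $2$; once this is secured, the remaining steps are formal diagram chases plus invocations of data already assembled in \Cref{cohom-Enriques}.
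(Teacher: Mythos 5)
Your proposal is correct and follows essentially the same route as the paper: you use the morphism of Kummer sequences to see that the transition map $Br(S)[4]\to Br(S)[2]$ is multiplication by $2$, hence zero, and you use \Cref{les Sch} to identify non-liftable classes with classes that survive restriction to the generic point. The paper phrases this more tersely (choosing $\alpha$ via the splitting from the proof of \Cref{cohom-Enriques} and arguing the second claim by contraposition: if $k(\alpha)=0$ then $\alpha$ is algebraic hence lifts), but the underlying computation and the invocation of \Cref{les Sch} are identical to yours.
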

\begin{proof}
From the proof of \Cref{cohom-Enriques}, we see that we can pick $\alpha$ to be the non-zero class coming from $\Het^2(S,\G_m)[2]$ as the map $\Het^2(S,\G_m)[4]\to\Het^2(S,\G_m)[2]$ is zero.

If $k(\alpha)$ were zero, then $\alpha$ is algebraic by \Cref{les Sch}, impying it would lift to any $\Het^2(S,\mu_{\ell^r})$.
\end{proof}

\begin{lemma}\label{Product Algebraic and Liftable}
Let $X$ and $Y$ be smooth projective varieties. 
\begin{enumerate}
\item Suppose the cohomology of $X$ is algebraic. Then for any $0\neq\alpha\in\Het^{2n}(X,\mu_\ell)$ and $\beta\in\Het^{2m}(Y,\mu_\ell)$ we have $\alpha\times\beta$ is algbraic if and only if $\beta$ is algebraic.

\item Suppose that $\Het^\ast(X,\mu_{\ell^r})\twoheadrightarrow\Het^\ast(X,\mu_\ell)$ is surjective and let $\alpha\in\Het^i(X,\mu_\ell)$ and $\beta\in\Het^j(Y,\mu_\ell)$. Then $\alpha\times\beta$ lifts to $\mu_{\ell^r}$ if and only if $\beta$ does.
\end{enumerate}
\end{lemma}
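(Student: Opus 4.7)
The strategy for both parts is the same push-pull trick: pair $\alpha$ with a Poincaré-dual class on $X$ and then apply the projection formula for $p_Y\colon X\times Y\to Y$ to recover $\beta$ (or a lift of $\beta$) from $\alpha\times\beta$.

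For part (1), the ``if'' direction is immediate, since $\alpha$ is algebraic by the hypothesis on $X$ and $\alpha\times\beta=p_X^\ast\alpha\cup p_Y^\ast\beta$ is a cup product of algebraic classes. For the ``only if'' direction, use that $\mu_\ell$ is a field and $\alpha\neq 0$ to find, via Poincaré duality on $X$, a class $\alpha'\in\Het^{2\dim X-2n}(X,\mu_\ell)$ with $\int_X\alpha\cup\alpha'=1$. By hypothesis $\alpha'$ is also algebraic, so $p_X^\ast\alpha'\cup(\alpha\times\beta)=(\alpha'\cup\alpha)\times\beta$ is algebraic. Applying the Gysin pushforward $(p_Y)_\ast$ (\Cref{Gysin map}) and the projection formula gives $(p_Y)_\ast\bigl((\alpha'\cup\alpha)\times\beta\bigr)=\bigl(\int_X\alpha'\cup\alpha\bigr)\cdot\beta=\beta$, and since pushforward of algebraic classes is algebraic, $\beta$ is algebraic.

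For part (2), the ``if'' direction follows directly from the surjectivity hypothesis: lift $\alpha$ to some $\tilde\alpha\in\Het^i(X,\mu_{\ell^r})$ and $\beta$ to some $\tilde\beta\in\Het^j(Y,\mu_{\ell^r})$; then $\tilde\alpha\times\tilde\beta$ lifts $\alpha\times\beta$. For the converse I assume tacitly that $\alpha\neq 0$ (the case $\alpha=0$ is vacuous, as $\alpha\times\beta=0$ lifts to $0$ irrespective of $\beta$; in the paper's application, $\alpha$ will be the nonzero non-liftable class on the Enriques surface from \Cref{cohom-class-Enriques}). Choose $\alpha'\in\Het^{2\dim X-i}(X,\mu_\ell)$ Poincaré-dual to $\alpha$, satisfying $\int_X\alpha\cup\alpha'=1$, and lift it to $\tilde{\alpha'}\in\Het^{2\dim X-i}(X,\mu_{\ell^r})$ using the surjectivity hypothesis. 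Given any lift $\gamma\in\Het^{i+j}(X\times Y,\mu_{\ell^r})$ of $\alpha\times\beta$, the class $(p_Y)_\ast\bigl(p_X^\ast\tilde{\alpha'}\cup\gamma\bigr)\in\Het^j(Y,\mu_{\ell^r})$ reduces modulo $\ell$ to $\beta$ by the same projection formula computation, hence provides the required lift of $\beta$.

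The only point that deserves checking is that the reduction map $\mu_{\ell^r}\twoheadrightarrow\mu_\ell$ commutes with cup product, with $p_X^\ast$, and with the Gysin pushforward $(p_Y)_\ast$; these are standard functoriality properties of étale cohomology, so I do not anticipate a serious obstacle.
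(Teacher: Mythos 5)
Your proposal is correct and follows essentially the same approach as the paper: apply Poincaré duality on $X$ to produce a class $\alpha'$ pairing nontrivially with $\alpha$, then use the projection formula (equivalently, the correspondence action of $\alpha\times\beta$, as the paper writes it) to recover $\beta$ up to an invertible scalar. The only cosmetic difference is that you normalize the pairing to $1$ while the paper keeps a general nonzero (hence invertible) $\lambda\in\mu_\ell$.
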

\begin{proof}
\emph{1.} As $\alpha$ is algebraic by assumption, the `if'-part is clear.

Conversely, write $\alpha\times\beta=[\Gamma]$ for some algebraic cycle $\Gamma$. By assumption, we know $\alpha=[Z]$ is algebraic as well. Let $\alpha'\in\Het^{2(d_X-n)}(X,\mu_\ell)$ so that $\int_X\alpha\cup\alpha'=\lambda$ is non-zero, which exists because $\Het^{2n}(X,\mu_\ell)\cong\Het^{2(d_X-n)}(X,\mu_\ell)^\vee$ where $\alpha\mapsto\alpha^\vee(-):=\int_X\alpha\cup-$, which is non-zero by assumption. By the assumption on $X$, there exists a closed $Z'\in \CH^{d_x-n}(X)$ so that $\alpha'=[Z']$. Then we compute that $\lambda\beta=(\alpha\times\beta)^\ast\alpha'=([\Gamma])^\ast([Z'])$ is algebraic, hence $\beta$ is because $\lambda$ is invertible.

\emph{2.} As $\alpha$ lifts to $\mu_{\ell^r}$ by assumption, the `if'-part is clear.

Conversely, if we let $\alpha'$ be as before, then by assumption, it lifts to $\mu_{\ell^r}$ so we again conclude that $\lambda\beta=(\alpha\times\beta)^\ast\alpha'$ lifts to $\mu_{\ell^r}$ hence $\beta$ does as the lift of $\lambda$ in $\mu_{\ell^r}$ remains invertible.
\end{proof}

\begin{theorem}\label{failure ITC}
Let $\alpha\in\Het^2(S,\mu_2)$ be the one from \Cref{cohom-class-Enriques} and let $\mathcal X\to\P^1$ be a Lefschetz pencil as in \Cref{L pencil}. There exists a $\beta\in\Het^{2n-1}(X_{\bar\eta},\mu_2)$ such that $F_{n-1}(\beta\times\alpha)\in\frac{\Hrnr{n-1}^{2n+1}(X_{\bar\eta}\times S,\mu_2)}{\Hrnr{n-1}^{2n+1}(X_{\bar\eta}\times S,\Z_2)}\cong Z^{n+1}(X_{\bar\eta}\times S)[2]$ is non-zero.
\end{theorem}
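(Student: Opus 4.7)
The plan is to argue by contradiction, pushing a hypothetical $\mu_4$-lift of $F_{n-1}(\beta\times\alpha)$ through the refined specialization and residue operations until one obtains a lift of $\alpha$ to $\mu_4$, which is forbidden by Corollary~\ref{cohom-class-Enriques}. First I would take $\beta'\in\Het^{2n-1}(X_{\eta'},\mu_2)$ as in Proposition~\ref{Existence Non-zero ResidueSp}, so that $Res(sp_{0'}(\beta'))\in\Het^{2n-2}(D,\mu_2)$ is non-zero for some $0'\in B'$ above $0\in\P^1$, where $D\subseteq\widetilde{X'_{0'}}$ is the exceptional divisor of the blow-up of the ODP $x'_{0'}$; let $\beta\in\Het^{2n-1}(X_{\bar\eta},\mu_2)$ be the image of $\beta'$. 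To prove the theorem it then suffices to show that $F_{n-1}(\beta\times\alpha)$ is not the mod-$2$ reduction of any class in $\Hrnr{n-1}^{2n+1}(X_{\bar\eta}\times S,\Z_2)$. Supposing the opposite for contradiction, one obtains in particular a class $\tilde\gamma\in\mrH^{2n+1}(F_n(X_{\bar\eta}\times S),\mu_4)$ whose image in $\mrH^{2n+1}(F_{n-1}(X_{\bar\eta}\times S),\mu_2)$ equals $F_{n-1}(\beta\times\alpha)$.

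By Lemma~\ref{refined-geometric-point}, after replacing $\eta'$ by a larger finite separable extension I may assume that $\tilde\gamma$ descends to $\tilde\gamma_{\eta'}\in\mrH^{2n+1}(F_n(X_{\eta'}\times S),\mu_4)$. I would then apply the refined specialization $sp_{0'}$ of Corollary~\ref{Specialization Refined} along the smooth family $\mathcal X'_{\mathrm{sm}}\times S\to B'$, where $\mathcal X'_{\mathrm{sm}}\subseteq\mathcal X'$ is the smooth locus of $\mathcal X'\to B'$, so that the fiber over $0'$ is $(X'_{0'}\setminus\{x'_{0'}\})\times S$. Because the refined specialization is compatible with both the filtration $F_\ast$ and coefficient reduction (Remark~\ref{coeff sp}), and because $sp_{0'}(\beta'\times\alpha)=sp_{0'}(\beta')\times\alpha$ by Proposition~\ref{product-sp}, the mod-$2$ reduction of $sp_{0'}(\tilde\gamma_{\eta'})$, viewed at the level $F_{n-1}$, equals $F_{n-1}(sp_{0'}(\beta')\times\alpha)$.

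Next I would take the refined residue (Lemma~\ref{Residue Refined}) along the smooth divisor $D\times S\subseteq\widetilde{X'_{0'}}\times S$. Using Corollary~\ref{product-residue} on ordinary representatives (giving $Res(sp_{0'}(\beta')\times\alpha)=Res(sp_{0'}(\beta'))\times\alpha$) together with the compatibility of refined residue with the natural map from ordinary cohomology, this produces a $\mu_4$-lift of $F_{n-1}(Res(sp_{0'}(\beta'))\times\alpha)\in\mrH^{2n}(F_{n-1}(D\times S),\mu_2)$, namely $Res(sp_{0'}(\tilde\gamma_{\eta'}))$. Lemma~\ref{liftable lemma} applied to $X=D\times S$ with $i=n$, $r=1$ and $A=\mu_4$ gives the isomorphism
\[
\frac{\Het^{2n}(D\times S,\mu_2)}{\Het^{2n}(D\times S,\mu_4)}\xrightarrow{\sim}\frac{\Hrnr{n-1}^{2n}(D\times S,\mu_2)}{\Hrnr{n-1}^{2n}(D\times S,\mu_4)},
\]
so the ordinary class $Res(sp_{0'}(\beta'))\times\alpha\in\Het^{2n}(D\times S,\mu_2)$ itself must lift to $\Het^{2n}(D\times S,\mu_4)$.

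Finally, since $D$ is a smooth quadric of dimension $2n-2$ its $\ell$-adic cohomology is torsion-free and generated by algebraic cycles, so in particular $\Het^\ast(D,\mu_4)\twoheadrightarrow\Het^\ast(D,\mu_2)$ is surjective. Combined with the non-vanishing of $Res(sp_{0'}(\beta'))$, Lemma~\ref{Product Algebraic and Liftable}(2) then forces $\alpha$ to lift to $\mu_4$, contradicting Corollary~\ref{cohom-class-Enriques}. The one technical point that will need care is the compatibility of the refined specialization and refined residue with the external product $\beta'\times\alpha$; however, this should be routine because both refined operations are built from their ordinary counterparts on sufficiently large open subsets, where the identities of Propositions~\ref{product-sp} and \ref{product-residue} apply directly to representatives.
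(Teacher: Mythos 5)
Your proposal is correct and follows essentially the same route as the paper: descend a hypothetical $\mu_4$-lift to a finite extension $\eta'$, push it through the refined specialization at $0'$ and the refined residue along $D\times S$, invoke \Cref{liftable lemma} to get an honest $\mu_4$-lift of $Res(sp_{0'}(\beta'))\times\alpha$, and then use \Cref{Product Algebraic and Liftable}(2) (with the surjectivity of $\Het^\ast(D,\mu_4)\to\Het^\ast(D,\mu_2)$ and the non-vanishing of $Res(sp_{0'}(\beta'))$) to force a $\mu_4$-lift of $\alpha$, contradicting \Cref{cohom-class-Enriques}. The one genuine streamlining is your observation that proving non-membership in the image of $\Z_2$-coefficients already implies non-vanishing of $F_{n-1}(\beta\times\alpha)$ in $\mrH^{2n+1}(F_{n-1}(X_{\bar\eta}\times S),\mu_2)$ (since $0$ is always in the image), so the paper's separate $\mu_2$-algebraicity argument is logically redundant; just make sure to record explicitly, as the paper does via \Cref{compatibility-sp-fin-ext}, that $Res(sp(\beta'))$ remains non-zero after enlarging $\eta'$ for the descent of the $\mu_4$-lift.
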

\begin{proof}
Let $\beta\in\Het^{2n-1}(X_{\bar\eta},\mu_2)$ be the image of the $\beta'$ from \Cref{Existence Non-zero ResidueSp} in $\Het^{2n-1}(X_{\bar\eta},\mu_2)$. And suppose $F_{n-1}(\beta\times\alpha)=0$ in $\Het^{2n+1}(F_{n-1}(X_{\bar\eta}\times S),\mu_2)$. By \Cref{refined-geometric-point} there now exists a finite extension $\eta''\to\eta'$ and $\beta'':=(\beta')_{\eta''}\in\Het^{2n-1}(X_{\eta''},\mu_2)$ so that $F_{n-1}(\beta''\times\alpha)=0\in\Het^{2n+1}(F_{n-1}(X_{\eta''}\times S),\mu_2)$. Suppose $B''\to B'$ is the finite map of smooth curves extending $\eta''\to\eta'$ with some $0''\mapsto 0'$. Then using \Cref{Specialization Refined} and \Cref{product-sp} we see that $F_{n-1}(sp_{0''}(\beta'')\times\alpha)=sp_{0''}F_{n-1}(\beta''\times\alpha)=0$. Taking the residue, we conclude by \Cref{Residue Refined} and \Cref{product-residue} that $F_{n-1}(\gamma\times\alpha)=0$ in $\Het^{2n}(F_{n-1}(D\times S),\mu_2)$, where $\gamma:=Res(sp_{0''}(\beta''))$. By \Cref{les Sch} this happens if and only if $\gamma\times\alpha$ is algebraic. Note that \Cref{compatibility-sp-fin-ext} gives $(f_{0''})_\ast sp_{0''}(\beta'')=sp_{0'}(\beta')$, so by definition of $\beta'$, we conclude that $\gamma\neq 0$. Now \Cref{Product Algebraic and Liftable} gives that $\alpha$ is algebraic, but this would mean that $k(\alpha)=0$, a contradiction.

Now suppose $F_{n-1}(\beta\times\alpha)\in\Hrnr{n-1}^{2n+1}(X_{\bar\eta}\times S,\mu_2)$ lifts to $\Hrnr{n-1}^{2n+1}(X_{\bar\eta}\times S,\Z_2)$, then in particular it lifts to $\Hrnr{n-1}^{2n+1}(X_{\bar\eta}\times S,\mu_4)$. That is, there exists a $\delta\in \mrH^{2n+1}(F_n(X_{\bar\eta}\times S),\mu_4)$ so that $F_{n-1}(\delta)\mod 2=F_{n-1}(\beta\times\alpha)\in\Hrnr{n-1}^{2n+1}(X_{\bar\eta}\times S,\mu_2)$. This implies that there is a finite extension $\eta''\to\eta'$ and $\delta''\in \mrH^{2n+1}(F_n(X_{\eta''}\times S),\mu_4)$ with the property that $F_{n-1}(\delta'')\mod 2=F_{n-1}(\beta''\times\alpha)\in\mrH^{2n+1}(F_{n-1}(X_{\eta''}\times S),\mu_2)$. As before, now also using \Cref{coeff sp}, this implies that $F_{n-1}(\gamma\times\alpha)=0\in\frac{\Hrnr{n-1}^{2n}(D\times S,\mu_2)}{\Hrnr{n-1}^{2n}(D\times S,\mu_4)}$. Using \Cref{liftable lemma} this implies that $\gamma\times\alpha$ lifts to $\mu_4$, hence $\alpha$ would lift to $\mu_4$ by \Cref{Product Algebraic and Liftable}, a contradiction  again by our choice of $\alpha$ from \Cref{cohom-class-Enriques}.
\end{proof}

\begin{remark}\label{explicite class}
Now that we know that $X_{\bar\eta}\times S$ contains non-algebraic torsion cohomology classes, it is interesting to see which class it exactly is. From the proof of \cite[Theorem 7.7]{Sch1} we see that the non-algebraic class is given by $\delta(\beta\times\alpha)\in\Het^{2n+2}(X_{\bar\eta}\times S,\Z_2(n+1))[2]$, where $\delta$ is the Bockstein homomorphism. As the cohomology of $X_{\bar\eta}$ is torsion free, we see that $\delta(\beta\times\alpha)=\beta'\times\delta(\alpha)$, where $\beta'\in\Het^{2n-1}(X,\Z_2)$ is so that $\beta'\mod 2=\beta$, in particular, $\beta'$ is not divisible by $2$. Moreover, $\delta(\alpha)\in\Het^3(S,\Z_2)\cong\Z/2\Z$ is the unique non-zero class. So when $k=\C$, we indeed recover the same non-algebraic class found by Shen in \cite{S}.
\end{remark}

\subsection{General Statement}\label{general statement}
Let $\mathcal X\to\P^1$ be a Lefschetz pencil as before and write $X:=X_{\bar\eta}$ for its geometric generic fibre. Using the exact same arguments as in the proof of \Cref{failure ITC} we obtain the following.

\begin{theorem}\label{cor-gen-IHC}
Let $Y$ be a smooth variety, then the following hold
\begin{enumerate}[(i)]
\item if $\Hrnr{i-1}^{2i}(Y,\mu_\ell)\neq 0$, then $\Hrnr{n+i-2}^{2n+2i-1}(X\times Y,\mu_\ell)\neq0$;\\
\item if $\frac{\Het^{2i}(Y,\mu_\ell)}{\Het^{2i}(Y,\mu_{\ell^r})}\overset{\ref{liftable lemma}}{\cong}\frac{\Hrnr{i-1}^{2i}(Y,\mu_\ell)}{\Hrnr{i-1}^{2i}(Y,\mu_{\ell^r})}\neq0$ for some $r$, then $Z^{n+i}(X\times Y)[\ell]\neq 0$.
\end{enumerate}
\end{theorem}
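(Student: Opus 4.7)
The plan is to port the proof of \Cref{failure ITC} essentially verbatim, replacing the specific Enriques surface class by a chosen lift of the given class in refined unramified cohomology of $Y$. The key observation is that \cite[Corollary 5.10]{Sch1} with $\lceil 2i/2\rceil=i$ gives $\mrH^{2i}(F_iY,\mu_\ell)=\Het^{2i}(Y,\mu_\ell)$, so any nonzero class $\alpha\in\Hrnr{i-1}^{2i}(Y,\mu_\ell)$ is represented by a global class $\tilde\alpha\in\Het^{2i}(Y,\mu_\ell)$. Coupling with $\beta\in\Het^{2n-1}(X_{\bar\eta},\mu_\ell)$ from \Cref{Existence Non-zero ResidueSp}, the exterior product $\beta\times\tilde\alpha\in\Het^{2n+2i-1}(X_{\bar\eta}\times Y,\mu_\ell)$ is likewise global, and via $\Het^{2n+2i-1}(X_{\bar\eta}\times Y)\hookrightarrow\mrH^{2n+2i-1}(F_{n+i-1}(X_{\bar\eta}\times Y))$ it produces a canonical element $F_{n+i-2}(\beta\times\tilde\alpha)\in\Hrnr{n+i-2}^{2n+2i-1}(X_{\bar\eta}\times Y,\mu_\ell)$. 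This sidesteps defining a refined K\"unneth product, which is the only new ingredient needed beyond the argument of \Cref{failure ITC}.

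For (i), assume $F_{n+i-2}(\beta\times\tilde\alpha)=0$. By \Cref{refined-geometric-point} the vanishing descends to $X_{\eta''}\times Y$ for some finite separable $\eta''\to\eta'$, and \Cref{completion of curve} yields a smooth curve $B''\to B'$ extending it with some $0''\mapsto 0'$. Applying refined specialization $sp_{0''}$ (\Cref{Specialization Refined}) and the refined residue with respect to the smooth divisor $D\times Y$ (\Cref{Residue Refined}), together with the exterior-product compatibilities \Cref{product-sp} and \Cref{product-residue}, yields $F_{n+i-2}(\gamma''\times\tilde\alpha)=0$ in $\mrH^{2n+2i-2}(F_{n+i-2}(D\times Y),\mu_\ell)$, where $\gamma'':=Res(sp_{0''}(\beta''))\neq 0$ by \Cref{compatibility-sp-fin-ext} combined with \Cref{Existence Non-zero ResidueSp}. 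The degree $2(n+i-1)$ being even, \Cref{les Sch} at $j=n+i-1$ combined with the identification $\mrH^{2n+2i-2}(F_{n+i-1}(D\times Y))=\Het^{2n+2i-2}(D\times Y)$ implies that $\gamma''\times\tilde\alpha$ is algebraic on $D\times Y$. Since $D$ is a smooth projective quadric with algebraic cohomology and $\gamma''\neq 0$, \Cref{Product Algebraic and Liftable}(1) forces $\tilde\alpha$ to be algebraic, and a second application of \Cref{les Sch} at $j=i$ then contradicts $\alpha\neq 0$ in $\Hrnr{i-1}^{2i}(Y,\mu_\ell)$.

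For (ii), by \cite[Theorem 7.7]{Sch1} it suffices to show $F_{n+i-2}(\beta\times\alpha)$ does not lift from $\mu_\ell$ to $\mu_{\ell^r}$ in $\Hrnr{n+i-2}^{2n+2i-1}$ for the $r$ in the hypothesis. Assuming such a lift, the same descend-specialize-restrict argument (now tracking coefficients via \Cref{coeff sp}) produces the vanishing of $F_{n+i-2}(\gamma''\times\alpha)$ in $\Hrnr{n+i-2}^{2n+2i-2}(D\times Y,\mu_\ell)/\Hrnr{n+i-2}^{2n+2i-2}(D\times Y,\mu_{\ell^r})$. Since the degree is $2(n+i-1)$, \Cref{liftable lemma} identifies this quotient with $\Het^{2n+2i-2}(D\times Y,\mu_\ell)/\Het^{2n+2i-2}(D\times Y,\mu_{\ell^r})$, so $\gamma''\times\alpha$ lifts globally to $\mu_{\ell^r}$. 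Torsion-freeness of $\Het^\ast(D,\Z_\ell)$ makes $\Het^\ast(D,\mu_{\ell^r})\twoheadrightarrow\Het^\ast(D,\mu_\ell)$ surjective, so \Cref{Product Algebraic and Liftable}(2) with $\gamma''\neq 0$ forces $\alpha$ to lift to $\mu_{\ell^r}$, the desired contradiction. The main technical care lies in bookkeeping of the refined filtration level $F_{n+i-2}$ under $sp_{0''}$, $Res$ and coefficient reduction, but each compatibility reduces to the opens-level constructions already recorded in the preliminaries.
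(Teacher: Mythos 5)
Your proof is correct and follows the paper's intended approach; the paper itself proves this theorem only by stating that it follows ``using the exact same arguments as in the proof of \Cref{failure ITC}''. The one step you supply that the paper leaves implicit — invoking \cite[Corollary 5.10]{Sch1} (with $\lceil 2i/2\rceil=i$) to represent the given refined-unramified class by a global class $\tilde\alpha\in\Het^{2i}(Y,\mu_\ell)$ before forming the external product with $\beta$ — is exactly what is needed to run the descend-specialize-residue argument, and your degree and filtration bookkeeping ($F_{n+i-2}$ in degree $2n+2i-1$ on $X\times Y$, degree $2n+2i-2=2(n+i-1)$ at level $F_{n+i-1}$ on $D\times Y$ after residue, matching \Cref{les Sch} and \Cref{liftable lemma}) is correct.
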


\begin{cor}\label{gen failure ITC}
Let $S$ be an Enriques surface. Then $Z^{n+i}(X\times S^i)[2]\neq 0$ for every $i\in\Z_{\geq 1}$.
\end{cor}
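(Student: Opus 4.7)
The plan is to reduce the corollary to Theorem~\ref{cor-gen-IHC}(ii) applied with $Y=S^i$, $\ell=2$ and $r=2$; this leaves us to exhibit a class in $\Het^{2i}(S^i,\mu_2)$ that does not lift to $\Het^{2i}(S^i,\mu_4)$. The natural candidate is the external power
\[
\alpha^{\times i}:=p_1^\ast\alpha\cup\cdots\cup p_i^\ast\alpha\in\Het^{2i}(S^i,\mu_2),
\]
where $\alpha\in\Het^2(S,\mu_2)$ is the Brauer-type class from Corollary~\ref{cohom-class-Enriques} and $p_k\colon S^i\to S$ denotes the $k$-th projection. To prove that $\alpha^{\times i}$ does not lift to $\mu_4$, I would compute its Bockstein $\delta\colon\Het^{2i}(S^i,\mu_2)\to\Het^{2i+1}(S^i,\mu_2)$ associated to the short exact sequence $0\to\mu_2\to\mu_4\to\mu_2\to 0$, and check that it is non-zero; non-vanishing of this Bockstein is equivalent to non-liftability.

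Since the Bockstein is a graded derivation for the cup product and commutes with pullback, a direct Leibniz computation gives
\[
\delta(\alpha^{\times i})=\sum_{k=1}^{i}p_1^\ast\alpha\cup\cdots\cup p_{k-1}^\ast\alpha\cup p_k^\ast\delta(\alpha)\cup p_{k+1}^\ast\alpha\cup\cdots\cup p_i^\ast\alpha.
\]
Because $\mu_2\cong\F_2$ is a field and $S^i$ is smooth and proper, the Künneth formula furnishes a Tor-free direct sum decomposition
\[
\Het^{2i+1}(S^i,\mu_2)\cong\bigoplus_{p_1+\cdots+p_i=2i+1}\Het^{p_1}(S,\mu_2)\otimes\cdots\otimes\Het^{p_i}(S,\mu_2).
\]
The $k$-th summand in the formula for $\delta(\alpha^{\times i})$ lies in the Künneth component with a $3$ in the $k$-th slot and $2$'s elsewhere, and these components are distinct for different $k$. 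By Corollary~\ref{cohom-class-Enriques} we have $\delta(\alpha)\neq 0$ in $\Het^3(S,\mu_2)\cong\Z/2\Z$ (this is precisely the non-liftability of $\alpha$), and of course $\alpha\neq 0$; hence each summand is a nonzero simple tensor in its own Künneth component, and the total sum is nonzero.

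Applying Theorem~\ref{cor-gen-IHC}(ii) then yields $Z^{n+i}(X\times S^i)[2]\neq 0$, finishing the proof. The only non-routine ingredient is the Leibniz rule for the étale Bockstein of $0\to\mu_2\to\mu_4\to\mu_2\to 0$ together with the compatibility of the Künneth decomposition with external cup products; these are classical facts about connecting homomorphisms and present no serious obstacle, so the argument should go through essentially as outlined.
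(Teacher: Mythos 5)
Your argument is correct, and it takes a genuinely different route from the paper. The paper proceeds by induction on $i$: given a non-liftable class $\alpha \in \Het^{2i-2}(S^{i-1},\mu_2)$, it chooses a companion class $\beta \in \Het^2(S,\mu_{2^r})$ that is Poincar\'e non-degenerate (so some $\beta'$ has $\beta\cup\beta'=1$), forms $\alpha\times\bar\beta$, and argues that if this were liftable then pulling back along the liftable class $\bar\beta'$ would make $\alpha$ itself liftable. The paper deliberately avoids crossing $\alpha$ with itself, because its Lemma~\ref{Product Algebraic and Liftable}(2) needs one of the two factors to come from a variety whose cohomology surjects from $\mu_{\ell^r}$ onto $\mu_\ell$ --- a condition $S$ fails precisely because $\alpha$ does not lift --- so it must smuggle in an auxiliary liftable class $\beta$. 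Your approach sidesteps this entirely: you take the explicit class $\alpha^{\times i}$ and detect non-liftability through the Bockstein $\delta$ for $0\to\mu_2\to\mu_4\to\mu_2\to 0$, using the Leibniz rule together with the K\"unneth decomposition over $\F_2$ (Tor-free since $\F_2$ is a field, and valid for $\Het^\ast$ of smooth proper varieties over $\bar k$). Each summand in $\delta(\alpha^{\times i})$ sits in a distinct K\"unneth component indexed by the position of the degree-$3$ slot, is a nonzero simple tensor, and hence the sum is nonzero. Both arguments are valid; yours is more explicit about the non-liftable class (it is simply $\alpha^{\times i}$, and one sees directly that its Bockstein is a sum of $i$ independent tensors), while the paper's is slightly more elementary in that it avoids K\"unneth and the derivation property of the Bockstein, at the cost of an auxiliary Poincar\'e-dual class and an extra inductive layer. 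The paper's Remark~\ref{explicite class} already implicitly uses the same Bockstein-Leibniz mechanism you invoke, so the ingredients are in the same spirit.
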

\begin{proof}
Let $i\in\Z_{\geq 1}$ and write $Y=S^i$. By \Cref{cor-gen-IHC}, it suffices to show there exists an $\alpha\in\mrH^{2i}(Y,\mu_\ell)$ that does not lift to $\mu_{\ell^r}$ for some $r$.

\Cref{cohom-class-Enriques} gives the result for $i=1$. Suppose inductively that $\alpha\in\Het^{2i-2}(S^{i-1},\mu_2)$ is a non-extendable class, say the extendability fails for $\mu_{2^r}$. Let $\beta\in\Het^2(S,\mu_{2^r})\cong (\Z/2^r\Z)^{10}\oplus(\Z/2\Z)^2$ be not a multiple of 2,  so that there exists $\beta'\in\Het^2(S,\mu_{2^r})\cong\Hom(\Het^2(S,\mu_{2^r}),\Z/2^r\Z)$ with the property $\beta\cup\beta'=1$. As this implies that $\bar\beta\cup\bar\beta'=1$, we have $(\alpha\times\bar\beta)^\ast\bar\beta'=\alpha$, where we write $\overline{(-)}$ for the natural map on cohomology to $\mu_2$ coefficients. So if $\alpha\times\bar\beta$ is extendable, then $\alpha$ is as well. So we conclude that $\alpha\times\bar\beta\in\Het^{2i}(S^i,\mu_2)$ cannot be lifted to $\mu_{2^r}$. (Note that $r=2$ suffices here.)
\end{proof}

\subsection{Variety of Lines}
Let $X\subseteq\P^{n+1}$ be a smooth cubic hypersurface over $k=\bar k$ with $n$ odd. Let $F=F(X)\subseteq Gr(\P^{n+1},\P^1)$ be the variety of lines, which is known to be smooth of dimension $2n-4$, \cite[Corollary 1.4 + Corollary 1.12]{AK}. We also write $P:=\{(x,[\ell])\in X\times F\mid x\in\ell\}$ for the incidence subscheme of $X\times F$. Note that $P\to F$ is a (smooth) $\P^1$-bundle over $F$ and $P$ is of codimension $n-1$ in $X\times F$. The action by correspondence of $P$ is known as the cylinder homomorphism $\Phi:=[P]^\ast\colon \Het^{3n-6}(F,\Z/\ell^r\Z)\to \Het^n(X,\Z/\ell^r\Z)$. We can take its inverse limit over $r$ to obtain $\Phi\colon \Het^{3n-6}(F,\Z_\ell)\to\Het^n(X,\Z_\ell)$. The following is a direct generalization of \cite[Lemma 3.2]{Renjie} (which comes from \cite[Lemma 3.17]{Renjiethesis}).
\begin{prop}\label{cylinder isom}
The cylinder homomorphism $\Phi\colon\Het^{3n-6}(F,\Z_\ell)\to\Het^n(X,\Z_\ell)$ is an isomorphism.
\end{prop}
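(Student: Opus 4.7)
The plan is to reduce the integral statement to the finite-coefficient version, which is essentially \cite[Lemma 3.2]{Renjie}, and then pass to the inverse limit. By construction, $\Phi=(p_X)_\ast\circ p_F^\ast$ where $p_X\colon P\to X$ and $p_F\colon P\to F$ are the two projections from the incidence variety, with $(p_X)_\ast$ the Gysin pushforward from \Cref{Gysin map}. Since both pullback and proper pushforward (and the trace map underlying the latter) are natural in the coefficient sheaf, the individual maps $\Phi_r\colon\Het^{3n-6}(F,\Z/\ell^r\Z)\to\Het^n(X,\Z/\ell^r\Z)$ are compatible with the reduction maps $\Z/\ell^{r+1}\Z\to\Z/\ell^r\Z$, and they assemble into the $\Z_\ell$-level map $\Phi$.

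The first step is to invoke \cite[Lemma 3.2]{Renjie}, which (adapting \cite[Lemma 3.17]{Renjiethesis}) shows that each $\Phi_r$ is an isomorphism. Because $X$ and $F$ are proper over $k$ and the coefficients are finite, all groups $\Het^i(X,\Z/\ell^r\Z)$ and $\Het^i(F,\Z/\ell^r\Z)$ are finite, so the inverse systems in $r$ trivially satisfy the Mittag-Leffler condition. The standard comparison $\Het^i(Y,\Z_\ell)\cong\varprojlim_r\Het^i(Y,\Z/\ell^r\Z)$ for proper $Y$ then yields $\Phi=\varprojlim_r\Phi_r$, which is an isomorphism as the inverse limit of an isomorphism of Mittag-Leffler systems.

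The substance of the argument therefore lives in the cited references. There, the crucial input is the surjectivity of $\Phi_r$: it is obtained by a Lefschetz pencil of hyperplane sections of $X$, together with an analysis of the $\P^1$-bundle structure of $p_F\colon P\to F$, reducing the problem to the classical case $n=3$ of cubic threefolds, where the cylinder homomorphism is the Clemens-Griffiths incidence isomorphism. The main obstacle one needs to verify for our $\ell$-adic generalization is that each tool used in the singular-cohomology argument of \cite{Renjiethesis} has an étale-cohomological analogue over $k=\bar k$ with $\mathrm{char}(k)\neq\ell$; this concerns the projective bundle formula for $p_F\colon P\to F$, the Gysin sequence for the smooth closed embedding $P\hookrightarrow X\times F$, and the weak Lefschetz theorem for smooth hypersurfaces. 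All of these are available in the $\ell$-adic setting and are functorial in $r$, so the entire argument passes through verbatim to yield the isomorphism $\Phi_r$, and hence, by the preceding paragraph, the desired isomorphism $\Phi$.
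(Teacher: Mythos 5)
Your approach is genuinely different from the paper's, and it has a gap at the decisive point. You propose to re-run the argument of \cite[Lemma 3.17]{Renjiethesis} directly in \'etale cohomology over $k$, claiming it "passes through verbatim" because the projective bundle formula, Gysin sequences, and weak Lefschetz all have $\ell$-adic analogues. Those three tools do transfer, but they are not where the substance lies: by your own description the reduction terminates in the cubic threefold case, where the cylinder map is the Clemens--Griffiths incidence isomorphism. That is a Hodge-theoretic statement about the intermediate Jacobian of a complex cubic threefold, and it has no verbatim \'etale analogue over an arbitrary algebraically closed field (let alone in characteristic $p$). Asserting that "the entire argument passes through verbatim" therefore hides exactly the step that requires work. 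Relatedly, the fact that the paper calls the proposition a \emph{generalization} of \cite[Lemma 3.2]{Renjie} already signals that the cited lemma does not, as stated, give each $\Phi_r$ in the \'etale setting over general $k$.

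The paper sidesteps all of this by a different route: it invokes Shimada's theorem for the cylinder isomorphism with integral singular coefficients over $\C$ (using torsion-freeness of $\Het^\ast(X)$ and $\Het^\ast(F)$ to pass to $\Z/m\Z$), transfers to \'etale cohomology over $\C$ via the comparison isomorphism, handles characteristic $p$ by lifting $X$ over the Witt ring $W_p(k)$ and applying smooth-proper base change, and finally reduces general $k$ to a small subfield. This route buys precisely what your proposal lacks: it never needs to re-derive the cubic threefold input algebraically, and it actually addresses positive characteristic. Your proposal, as written, says nothing about $\mathrm{char}(k) = p > 0$; the Witt-vector lift (or some substitute) is essential there and cannot be omitted. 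Finally, your passage from $\Phi_r$ to $\Phi$ via $\varprojlim_r$ is fine and matches the implicit step in the paper, so that part is not in question.
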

\begin{proof}
Suppose first that the transcendence degree of $k$ over its prime field is less than the cardinality of $\C$.

Suppose $\mathrm{char}(k)=0$ so we can embed $k\subseteq \C$, then using \cite[Corollary VI.4.3]{Milne} we have natural isomorphisms induced by the pullback $\Het^i(F,\mu_{\ell^r})\isomto\Het^i(F_\C,\mu_{\ell^r})$ and $\Het^i(X,\mu_{\ell^r})\isomto\Het^i(X_\C,\mu_{\ell^r})$ for every $r$. So it suffices to show $\Het^{3n-6}(F_\C,\mu_{\ell^r})\to\Het^n(X_\C,\mu_{\ell^r})$ is an isomorphism for every $r$.

As the cohomology of $X$ and $F$ are torsion free, \cite[\textsection 3.1]{S} and \cite[Corollary 4.14]{Renjiethesis} respectively, we conclude by \cite{Shimada} that for $X$ and $F=F(X)$ over $\C$, the cylinder morphism $\Het^{3n-6}(F,\Z)\to\Het^n(X,\Z)$ is an isomorphism.  Moreover, $\Het^{3n-6}(F,\Z/m\Z)\to\Het^n(X,\Z/m\Z)$ is an isomorphism for every $m$. So the comparison theorem above implies that $\Het^{3n-6}(F_\C,\mu_{\ell^r})\to\Het^n(X_\C,\mu_{\ell^r})$ is an isomorphism for every $r$.

Suppose $\mathrm{char}(k)=p$, then let $R=W_p(k)$ the Witt ring of mixed characteristic. As is done above \cite[Lemma 3.17]{Renjiethesis} we can lift this smooth projective hypersurface $X$ to $\mathcal X\to R$, with generic point $Q=\mathrm{Frac}(R)$ of characteristic 0 and special fibre $X$.

Now we can embed $Q\subseteq\C$. And thus have a geometric point $\bar Q\colon\spec(\C)\to\spec(Q)\to\spec(R)$. So by the smooth and proper base change \cite[Corollary 4.2]{Milne}, it suffices to show $\Het^{3n-6}(F_{\bar Q},\Z_\ell)\to\Het^n(X_{\bar Q},\Z_\ell)$ is an isomorphism. Which follows if we can show that $\Het^{3n-6}(F_{\bar Q},\mu_{\ell^r})\to\Het^n(X_{\bar Q},\mu_{\ell^r})$ is an isomorphism for every $r$. This we have seen above.

For a general field $k$ (without cardinality assumption), because we are working with varieties, we can always find a subfield $k_0\subseteq k$ and $X_0/k_0$ so that $X_0\times_{k_0}k=X$ and $\overline{k_0}$ is small as assumed in the beginning of the proof. Now we can do the argument above, working with $\overline{X_{0}}=X_0\times_{k_0}\bar{k_0}$ and using again \cite[Cor VI.4.3]{Milne} we see that $\Het^i(\overline{X_{0}},\mu_{\ell^r})\cong\Het^i(X,\mu_{\ell^r})$, giving the result for general fields $k$.
\end{proof}

Now let $X$ be as in \Cref{general statement} and $F=F(X)$.
\begin{cor}\label{var of lines failure}
For every $i\in\Z_{\geq 1}$ we have $Z^{\frac{3n-5}{2}+i}(F\times S^i)[2]\neq 0$.
\end{cor}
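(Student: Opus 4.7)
The plan is to transfer the non-algebraic torsion class produced by \Cref{gen failure ITC} on $X \times S^i$ to one on $F \times S^i$ via the cylinder correspondence $P \subseteq X \times F$.

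First, I would apply \Cref{gen failure ITC} to a Lefschetz pencil of cubic hypersurfaces of the fixed odd dimension $n$, so that the ``$n$'' in that corollary corresponds to $(n+1)/2$ in the present variety-of-lines notation. This produces a non-algebraic $2$-torsion class $\mu \in \Het^{n+1+2i}(X \times S^i,\Z_2)$ representing a non-zero element of $Z^{(n+1)/2+i}(X \times S^i)[2]$. By \Cref{explicite class} and its straightforward extension to $S^i$ following the inductive construction in the proof of \Cref{gen failure ITC}, this class has the Künneth form $\mu = \beta' \times \gamma$ with $\beta' \in \Het^n(X,\Z_2)$ (not divisible by $2$, as a $\Z_2$-lift of the mod-$2$ specialization class) and $\gamma \in \Het^{2i+1}(S^i,\Z_2)[2]$ (a Bockstein image, hence $2$-torsion).

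Next, using \Cref{cylinder isom}, which asserts that $\Phi \colon \Het^{3n-6}(F,\Z_2) \to \Het^n(X,\Z_2)$ is an isomorphism, I set $\tilde\beta := \Phi^{-1}(\beta')$ and define
\[
\tilde\mu \;:=\; \tilde\beta \times \gamma \;\in\; \Het^{3n-5+2i}(F \times S^i,\Z_2).
\]
Then $\tilde\mu$ is $2$-torsion (since $\gamma$ is), and its cohomological degree is $2\bigl(\tfrac{3n-5}{2}+i\bigr)$, matching the target of the corollary. Clean Künneth decompositions on both sides are available because $\Het^{*}(X,\Z_2)$ and $\Het^{*}(F,\Z_2)$ are torsion free, as noted in the proof of \Cref{cylinder isom}.

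The main step is showing that $\tilde\mu$ is not algebraic. Suppose for contradiction that $\tilde\mu = \cl(\Gamma)$ for some algebraic cycle $\Gamma$ on $F \times S^i$. Pushforward along the correspondence $P \times \Delta_{S^i} \subset (F \times S^i) \times (X \times S^i)$ (of cycle-theoretic and cohomological shift $2n-6$) sends $\Gamma$ to an algebraic cycle on $X \times S^i$, whose cycle class is
\[
(P \times \Delta_{S^i})_{*}(\tilde\mu) \;=\; (\Phi \otimes \id)(\tilde\beta \times \gamma) \;=\; \Phi(\tilde\beta) \times \gamma \;=\; \beta' \times \gamma \;=\; \mu,
\]
where the Künneth decomposition $(P \times \Delta_{S^i})_{*} = \Phi \otimes \id$ uses torsion-freeness of $\Het^{*}(X,\Z_2)$ and $\Het^{*}(F,\Z_2)$. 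This would make $\mu$ algebraic, contradicting \Cref{gen failure ITC}. Hence $[\tilde\mu] \neq 0$ in $Z^{(3n-5)/2+i}(F \times S^i)[2]$.

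The main technical obstacle is the Künneth identification $(P \times \Delta_{S^i})_{*} = \Phi \otimes \id$ together with the compatibility of the correspondence action with $\ell$-adic cycle classes; both are standard, but need to be invoked carefully since algebraicity is the property being tracked. A secondary bookkeeping issue is the clash of meanings of ``$n$'' between \Cref{L pencil} (fibers of dimension $2n-1$) and the variety-of-lines setup ($\dim X = n$), which is resolved by choosing a Lefschetz pencil of cubic hypersurfaces of dimension $n$.
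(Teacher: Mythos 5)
Your proposal is correct and follows essentially the same route as the paper's proof: extract the Künneth form $\beta'\times\gamma$ of the non-algebraic class from \Cref{gen failure ITC} (via \Cref{explicite class}), transfer it across the cylinder isomorphism $\Phi$ of \Cref{cylinder isom}, and argue that since $\Phi\times\Delta$ is an algebraic correspondence isomorphism, the preimage $\tilde\beta\times\gamma$ on $F\times S^i$ cannot be algebraic. The only cosmetic differences are that you spell out the contradiction via the correspondence pushforward $(P\times\Delta_{S^i})_*$ and explicitly flag the notational reuse of $n$ (dimension of the hypersurface versus the Lefschetz-pencil parameter), both of which the paper leaves implicit.
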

\begin{proof}
We know by \Cref{gen failure ITC} that $X\times S^i$ contains non-algebraic cohomology classes. Similar to \Cref{explicite class} we see that this class is of the form $\beta\times\alpha\in\Het^n(X,\Z_2)\otimes\Het^{2i+1}(S^i,\Z_2)$.

\Cref{cylinder isom} gives an algebraic correspondence isomorphism $\Het^{3n-6}(F,\Z_2)\otimes\Het^{2i+1}(S^i,\Z_2)\overset{\Phi\times\Delta}{\to}\Het^n(X,\Z_2)\otimes\Het^{2i+1}(S^i,\Z_2)$. As its image contains non-algebraic (torsion) classes by the above, then so must $\Het^{3n-5+2i}(F\times S,\Z_2)$.
\end{proof}

\section{Closing Remarks}\label{closing remarks}

\subsection{Very General Fibre}
Given a family $\mathcal X\to S$ over a universal domain $\Omega$, then by \cite[Lemma 2.1]{Vial} we know that the geometric generic fibre is isomorphic to a very general fibre.

Now suppose $\mathcal X\to S$ is defined over any algebraically closed field $k$. Let $k\subseteq\Omega$ be any inclusion into a universal domain. Consider the base change $\mathcal X_\Omega\to S_\Omega$ of $X\to S$ to $\Omega$. By the above we know that there exists a very general fibre $s_\omega\in S_\Omega(\Omega)$ such that $X_{\overline{\Omega(S_\Omega)}}\cong X_{s_\omega}$. Say $s_\omega\mapsto s\in S(k)$, so $s_\omega\subseteq s$ can be seen as a field extension of separably closed fields. Then we know that the map $\Het^\ast(X_s,\Z_\ell)\to\Het^\ast(X_{s_\omega},\Z_\ell)$, the pullback along $X_{s_\omega}\to X_s$, is an isomorphism \cite[0DDG]{Stacks} or \cite[Corollary VI.4.3]{Milne}. The pullback map preserves algebraic cycles, so suppose we have found a non-algebraic class in $\Het^\ast(X_{\overline{\Omega(S)}},\Z_\ell)\cong\Het^\ast(X_{s_\omega},\Z_\ell)$ (eg. \Cref{failure ITC}), then there must also be a non-algebraic class in $\Het^\ast(X_s,\Z_\ell)$, on a very general fibre of $\mathcal X\to S$.

\subsection{The Result by Colliot-Thélène}
Here we give a direct generalization of the results of \cite{CT} on which the present paper is based.
\begin{prop}[{cf. \cite[Proposition A7]{Gabber}}]\label{Gabber Refined}
Let $X\to S$ be a smooth morphism with $X$ a finite dimensional smooth irreducible variety over a field $k$  with qcqs fibres and $S$ a smooth curve. Let $\ell$ be a prime invertible in $k$ and $\alpha\in \mrH^i(X,\mu_{\ell^r})$. Then the set $\Phi:=\{s\in S\mid F_j(\alpha|_{X_{\bar s}})=0\in\mrH^i(F_j (X_{\bar s}),\mu_{\ell^r})\}$ is closed under specialization.
\end{prop}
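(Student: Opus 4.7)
The plan is to exploit the refined specialization map from \Cref{Specialization Refined} together with the globalization property of \Cref{global sp}. Since $S$ is a smooth curve, any non-trivial specialization $s_1 \leadsto s_0$ has $s_1$ equal to the generic point $\eta$ of the component of $S$ containing $s_0$, so the task reduces to showing that if $F_j(\alpha|_{X_{\bar\eta}}) = 0$ in $\mrH^i(F_j X_{\bar\eta}, \mu_{\ell^r})$, then $F_j(\alpha|_{X_{\bar s}}) = 0$ in $\mrH^i(F_j X_{\bar s}, \mu_{\ell^r})$ for any closed point $s = s_0$.

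First I would pass from geometric to finite fibres. By \Cref{refined-geometric-point}, the vanishing $F_j(\alpha|_{X_{\bar\eta}}) = 0$ descends to some finite separable extension $\eta \subseteq \eta'$ over which $F_j(\alpha|_{X_{\eta'}}) = 0$. Taking $B'$ to be the normalization of $S$ in $\eta'$ yields a finite map of smooth curves $B' \to S$ with generic point $\eta'$ and some closed point $s' \in B'$ lying over $s$. Pulling back $\alpha$ along $X' := X \times_S B' \to X$ produces a class $\alpha' \in \mrH^i(X', \mu_{\ell^r})$ globally extending $\alpha|_{X_{\eta'}}$ whose restriction to the fibre over $s'$ is $\alpha|_{X_{s'}}$. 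Applying the refined specialization map $sp_{s'}\colon \mrH^i(F_j X_{\eta'}, \mu_{\ell^r}) \to \mrH^i(F_j X_{s'}, \mu_{\ell^r})$ from \Cref{Specialization Refined}, and invoking the refined analogue of \Cref{global sp}, one obtains
\[
F_j(\alpha|_{X_{s'}}) \;=\; sp_{s'}\bigl(F_j(\alpha|_{X_{\eta'}})\bigr) \;=\; sp_{s'}(0) \;=\; 0.
\]
A second application of \Cref{refined-geometric-point}, this time over the residue field of $s$ and using that $X_{\bar s} = X_{\bar{s}'}$ as geometric fibres, lifts this vanishing to $F_j(\alpha|_{X_{\bar s}}) = 0$, as desired.

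The key technical point, and the main obstacle in this plan, is to establish the refined version of \Cref{global sp}: that $sp_{s'}$ sends the $F_j$-class of a globally extendable class to the $F_j$-class of its restriction. Since $sp_{s'}$ at the refined level is defined by the same formula $-Res_{s'}((\pi) \cup -)$ as in the unrefined case, this should follow by combining \Cref{global sp} with the compatibility of residue maps with restriction to opens (\Cref{residue compatible restriction}), together with the observation that cup product with the uniformizer $(\pi)$ respects the $F_j$-filtration. Apart from this verification, all ingredients are already available in \Cref{preliminaries}, and the argument is strictly a refinement of the standard proof of Gabber's specialization result.
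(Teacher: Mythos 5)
Your proposal follows essentially the same approach as the paper's proof. Both descend the geometric vanishing to a finite extension $\eta'$ via \Cref{refined-geometric-point}, replace $S$ by a smooth curve $S'$ with generic point $\eta'$ (normalization / the finite morphism from \cite[0BXX]{Stacks}), pick $s'$ over $s$, specialize, and then re-apply \Cref{refined-geometric-point} to pass to the geometric closed fibre. The ``key technical point'' you flag — that the refined $sp_{s'}$ sends $F_j$ of a globally extendable class to $F_j$ of its restriction — is in fact already covered by the paper's existing lemmas: the compatibility of the refined specialization map with the $F_\ast$ filtration asserted in \Cref{Specialization Refined}, taken together with \Cref{global sp} at the top ($j$ large) filtration level where the refined group coincides with ordinary étale cohomology, gives $sp_{s'}(F_j(\alpha|_{X_{\eta'}})) = F_j(sp_{s'}(\alpha|_{X_{\eta'}})) = F_j(\alpha|_{X_{s'}})$ with no extra argument required. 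The only superficial difference is that the paper localizes to the DVR $R = \mathcal O_{S',s'}$ before specializing, whereas you work directly with the smooth family over $B'$; since $sp$ is defined locally at $s'$ anyway, this is merely a presentational choice.
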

\begin{proof}

Let $\eta$ be the generic point of $S$. We want to show if $\eta\in\Phi$, then $s\in\Phi$ for every closed point $s\in S$.

So suppose $F_j(\alpha|_{X_{\bar\eta}})=0$, using \Cref{refined-geometric-point} there exists a finite extension $\eta'\to\eta$ such that $F_j(\alpha|_{X_{\eta'}})=0$. So by \cite[0BXX]{Stacks} there exists a finite morphism $f\colon S'\to S$ where $\eta'$ is the generic point of a smooth curve $S'$. We can find $s'\in S'$ such that $f(s')=s$ and $\kappa(s)\to\kappa(s')$ is a finite extension. We let $R:=\mathcal O_{S',s'}$, which is a DVR as $s'\in S'$ is a smooth point. 


The map $\spec(R)\to S'$ hits precisely $\eta'$ and $s'$. Consider the base change $X_R:=X_{S'}\times_{S'}R$, which has fibres $X_{\eta'}$ and $X_{s'}$. Then using \Cref{global sp} and \Cref{Specialization Refined} we see that $F_j(\alpha|_{X_{s'}})=sp(F_j(\alpha|_{X_{\eta'}}))=0$. Using again \Cref{refined-geometric-point} we conclude that $F_j(\alpha|_{X_{\overline{s}}})=0$ and thus $s\in \Phi$ as wished.
\end{proof}

\begin{theorem}[{cf. \cite[Théorème 1.1]{CT}}]\label{CT Refined}
Let $\ell$ be a prime and suppose that the natural map $\mrH^i(X,\mu_{\ell})\to\mrH^i(F_jX,\mu_{\ell})$ is non-zero. Then there exists an elliptic curve $E$ such that the natural map $\mrH^{i+1}(X\times E,\mu_{\ell})\to\mrH^{i+1}(F_j(X\times E),\mu_{\ell})$ is non-zero. 
\end{theorem}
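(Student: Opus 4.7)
The plan is to construct $\beta$ as an external cup product with a class on a well-chosen elliptic curve $E$, then establish refined non-vanishing via a fibrewise pushforward argument. Concretely, since $\ell \neq \mathrm{char}(k)$ and (as the paper works over $k=\bar k$) we have $\mrH^1(E,\mu_\ell) \cong E[\ell](k) \cong (\Z/\ell)^{\oplus 2}$, with the cup product pairing $\mrH^1(E,\mu_\ell) \otimes \mrH^1(E,\mu_\ell) \to \mrH^2(E,\mu_\ell)\cong\mu_\ell(k)$ (the Weil pairing) being non-degenerate. I would pick any elliptic curve $E/k$ and classes $\gamma,\delta \in \mrH^1(E,\mu_\ell)$ with $c := \int_E(\gamma \cup \delta)$ a unit in $\Z/\ell$. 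Writing $p_X,p_E$ for the two projections from $X\times E$, set
\[
\beta := p_X^\ast\alpha \cup p_E^\ast\gamma \in \mrH^{i+1}(X\times E,\mu_\ell).
\]

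The claim is that $F_j\beta$ is non-zero in $\mrH^{i+1}(F_j(X\times E),\mu_\ell)$, and I would argue this by contradiction. Assume $F_j\beta = 0$, so $\beta|_W = 0$ on some Zariski open $W \subseteq X\times E$ with $\codim(X\times E\setminus W) > j$. Since $p_E^\ast\delta$ is globally defined, cupping with it preserves $F_j$-vanishing, so $F_j(\beta \cup p_E^\ast\delta) = 0$. Applying the fibrewise integration $p_{X\ast}\colon \mrH^{i+2}(X\times E,\mu_\ell) \to \mrH^i(X,\mu_\ell)$ (the proper smooth pushforward for $p_X$ with fibre $E$), the projection formula and smooth proper base change on the Cartesian square formed by $p_X, p_E, \pi_X\colon X \to \spec(k), \pi_E\colon E \to \spec(k)$ give
\[
p_{X\ast}(\beta \cup p_E^\ast\delta) = \alpha \cdot \pi_X^\ast\pi_{E\ast}(\gamma\cup\delta) = c\cdot\alpha.
\]
If $p_{X\ast}$ respects the $F_j$-filtration, this forces $F_j(c\cdot\alpha) = 0$, and hence $F_j\alpha = 0$ since $c$ is a unit modulo $\ell$, contradicting the hypothesis.

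The hard part will be proving that the fibrewise pushforward $p_{X\ast}$ does take $F_j$-vanishing classes to $F_j$-vanishing classes. Decomposing the complement $Z := X\times E\setminus W$ as $Z = Z^h \cup Z^v$, where $Z^h$ dominates $E$ and $Z^v = \bigcup_i Z_i$ is contained in finitely many fibres $X\times\{e_i\}$, one verifies that $p_X(Z^h)$ has codimension $>j$ in $X$ (matching the codimension of $Z^h$ in $X\times E$, because each component of $Z^h$ loses one dimension under $p_X$), but each $p_X(Z_i) \cong Z_i$ has codimension only $\geq j$ in $X$, since $Z_i \subseteq X\times\{e_i\} \cong X$ has codimension $\geq j$ in the fibre. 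This vertical defect prevents the naive recipe $V := X\setminus p_X(Z)$ from having codim-$(>j)$ complement. To remedy this, I would separate the vertical contribution by applying the refined Gysin sequence of \Cref{Residue Refined} to each inclusion $X\times\{e_i\}\hookrightarrow X\times E$, so that the vertical component enters as a residue class on $X\cong X\times\{e_i\}$ whose own $F_j$-behaviour can be controlled (either inductively or by a cancellation argument exploiting that $(\gamma\cup\delta)|_{e_i}=0$). A cleaner alternative would be to pass to the generic fibre of $p_X$ via \Cref{refined-generic-point}, arguing over $k(X)$ where $E_{k(X)}$ is an elliptic curve and the fibrewise integration reduces to a concrete trace without any vertical complication.
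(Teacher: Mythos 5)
Your strategy is genuinely different from the paper's, and it has a real gap precisely at the step you flag as ``the hard part.''

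The paper proves Theorem~\ref{CT Refined} using a result of Gabber: it takes a family $\mathcal E\to U$ of elliptic curves degenerating to $\mathbb G_m$ at a special point $P$, together with a class $\beta\in\mrH^1(\mathcal E,\mu_\ell)$ whose restriction to the $\mathbb G_m$-fibre has non-zero residue at $0$. The crux is then a \emph{residue} computation at $P$ (residue along a smooth divisor is a codimension-one operation, so it respects the $F_j$-filtration by Lemma~\ref{Residue Refined}), followed by the specialization argument of Proposition~\ref{Gabber Refined} to push the non-vanishing from the $\mathbb G_m$-fibre to the geometric generic fibre, and hence to a very general elliptic fibre via \cite[Lemma 2.1]{Vial}. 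So the elliptic curve $E$ in the paper is \emph{very general}, not arbitrary.

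Your scheme replaces this by a fibrewise pushforward $p_{X\ast}$: cup with $p_E^\ast\delta$ and integrate over the $E$-factor to recover $c\cdot\alpha$, using $\int_E\gamma\cup\delta = c$ a unit. The cohomological identity $p_{X\ast}(\beta\cup p_E^\ast\delta)=c\cdot\alpha$ is correct. But the entire content of the theorem is hidden in the unproved claim that $p_{X\ast}$ takes $F_j$-vanishing classes to $F_j$-vanishing classes, and here your own analysis of the ``vertical defect'' shows the naive recipe breaks: a closed $Z\subset X\times E$ of codimension $>j$ can have vertical components $Z_i\subset X\times\{e_i\}$ of codimension exactly $j$, so $p_X(Z)$ need not have codimension $>j$ in $X$, and one cannot choose an open $V\subset X$ with $\codim(X\setminus V)>j$ over which $p_X$ is proper onto $W$. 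Neither of your proposed remedies closes this. The ``cleaner alternative'' of passing to the generic fibre of $p_X$ — i.e.\ working on $E_{k(X)}$ via Lemma~\ref{refined-generic-point} — actually loses the filtration index entirely: $E_{k(X)}$ is a curve, so $F_j E_{k(X)} = E_{k(X)}$ for all $j\geq 1$, and at best you would conclude $F_0(\alpha)\neq 0\Rightarrow F_1(\alpha\times\gamma)\neq 0$, which neither matches the $j$ on both sides of the statement nor uses the hypothesis $F_j(\alpha)\neq 0$ when $j\geq 1$. The other remedy (separating vertical contributions with the refined Gysin sequence of Lemma~\ref{Residue Refined}) is where the genuine work would have to happen, and it is left entirely as a sketch; controlling the residue contribution on each $X\times\{e_i\}\cong X$ is not obviously easier than the original problem.

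Finally, note that your argument — if it worked — would give the conclusion for \emph{every} elliptic curve $E$, which is stronger than what Theorem~\ref{CT Refined} asserts (``there exists''). The paper's mechanism (Gabber's degeneration plus Proposition~\ref{Gabber Refined}) is designed precisely to avoid having to prove the statement for an arbitrary $E$: the $\mathbb G_m$-degeneration gives a point where the class can be detected by a genuine residue (respecting $F_j$), and specialization carries that non-vanishing to a very general fibre. Your approach would need to supply a substitute for that degeneration, and the pushforward lemma you would need is exactly what does not hold without further hypotheses.
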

\begin{proof}
As in the proof of \cite[Théorème 1.1]{CT} a result of Gabber \cite[Proposition A4]{Gabber} gives a class $\beta\in\mrH^1(Y,\mu_\ell)$, where $Y=\mathcal E\to U$ is a family with general fibres elliptic curves and special fibre $\mathbb G_m$ over a $P\in U(\Q)$, such that $Res(\beta|_P)\neq 0\in \mrH^0(\{0\},\mu_\ell)$, where we view $\mathbb G_m=\A^1\setminus\{0\}$.

Let $\alpha\in\mrH^i(X,\mu_\ell)$ such that $F_j(\alpha)\neq0$. Then $\alpha\times\beta\in\mrH^{i+1}(X\times Y_\C,\mu_\ell)$ and $Res(\alpha\times\beta|_P)=\pm\alpha\times Res(\beta|_P)=u\cdot\alpha\in\mrH^i(X,\mu_\ell)$, where $u\in\mu_\ell$ is non-zero. Thus $F_j(Res(\alpha\times\beta|_P))=u\cdot F_j(\alpha)$ is non-zero in $\mrH^i(F_jX,\mu_\ell)$ by assumption. By \Cref{Residue Refined}, we have that $Res(F_j(\alpha\times\beta|_P))\neq 0$ and thus in particular that $F_j(\alpha\times\beta|_P)\neq 0$. 

By \Cref{Gabber Refined}, the set $\Phi=\{u\in U\mid F_j(\alpha\times\beta|_{\bar u})=0\}$ is closed under specialization. The above implies that $P\not\in\Phi$, so in particular the $\eta_U\not\in\Phi$. Because the geometric generic fibre can be identified with a very general fibre \cite[Lemma 2.1]{Vial}, we conclude that there exists a very general elliptic curve $E$ such that $\mrH^{i+1}(X\times E,\mu_{\ell^r})\to\mrH^{i+1}(F_j(X\times E),\mu_{\ell^r})$ is non-zero.
\end{proof}

\begin{cor}[cf. \Cref{cor-gen-IHC}]
Let $X/\C$ be a smooth variety and $\alpha\in\mrH^{2n}(X,\mu_\ell)$ such that $F_{n-1}(\alpha)\neq0\in\mrH^{2n}(F_{n-1}X,\mu_\ell)$ and $\alpha$ does not lift to $\mu_{\ell^r}$ for some $r$. Then there exists an elliptic curve $E$ such that the integral Hodge conjecture for $n+1$ cycles fails on $X\times E$.
\end{cor}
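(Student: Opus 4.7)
The plan is to refine the argument of \Cref{CT Refined} so as to produce a class on $X \times E$ that is simultaneously nonzero in $\Hrnr{n-1}^{2n+1}(X \times E, \mu_\ell)$ and non-liftable to $\mu_{\ell^r}$-coefficients. Combined with \Cref{liftable lemma} and \cite[Theorem 7.7]{Sch1}, such a class realizes a nonzero element of $Z^{n+1}(X \times E)[\ell]$, and the Artin comparison isomorphism then converts this into a non-algebraic $\ell$-torsion (hence automatically Hodge) integral cohomology class in $\mrH^{2n+2}(X \times E, \Z)$, establishing the failure of the integral Hodge conjecture in codimension $n+1$ on $X \times E$.

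Let $Y \to U$, $P \in U(\Q)$ with fibre $Y_P = \G_m \subseteq \A^1$, and $\beta \in \Het^1(Y, \mu_\ell)$ with $u_0 := Res(\beta|_P) \in \mu_\ell \setminus \{0\}$ be as in the proof of \Cref{CT Refined}, and consider $\alpha \times \beta \in \Het^{2n+1}(X \times Y, \mu_\ell)$. The key step is a liftable analogue of \Cref{Gabber Refined}: the set
\[
\Psi := \left\{u \in U \,\middle|\, F_{n-1}((\alpha \times \beta)|_{X \times Y_{\bar u}}) \in \im\bigl(\Hrnr{n-1}^{2n+1}(X \times Y_{\bar u}, \mu_{\ell^r}) \to \Hrnr{n-1}^{2n+1}(X \times Y_{\bar u}, \mu_\ell)\bigr)\right\}
\]
is closed under specialization. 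The proof mirrors \Cref{Gabber Refined}: a lift at the geometric generic fibre descends, via \Cref{refined-generic-point} and \Cref{refined-geometric-point}, to a finite extension $\eta'$ of the generic point $\eta_U$; applying \Cref{Specialization Refined} on the corresponding smooth curve produces a class over the closed fibre whose mod-$\ell$ reduction, by \Cref{coeff sp} together with the globality of $\alpha \times \beta$ and \Cref{global sp}, is exactly $F_{n-1}((\alpha \times \beta)|_{X \times Y_{\bar s}})$.

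Next I show $P \notin \Psi$, whence $\eta_U \notin \Psi$ by the above. At $P$ the fibre is $\G_m \subseteq \A^1$, so \Cref{Residue Refined} applied to the divisor $X \times \{0\} \hookrightarrow X \times \A^1$ gives a refined residue $\Hrnr{n-1}^{2n+1}(X \times \G_m, \mu_\ell) \to \Hrnr{n-1}^{2n}(X, \mu_\ell)$ sending $F_{n-1}((\alpha \times \beta)|_P)$ to $\pm u_0 \cdot F_{n-1}(\alpha)$ by \Cref{product-residue}. If $F_{n-1}((\alpha \times \beta)|_P)$ lifted to $\mu_{\ell^r}$, naturality of the residue in the coefficients would force $u_0 \cdot F_{n-1}(\alpha)$ to lift as well; \Cref{liftable lemma} (with $i = n$ and $A = \mu_{\ell^r}$) would then yield a lift of $u_0 \cdot \alpha$ in $\Het^{2n}(X, \mu_\ell)$, and choosing a lift of $u_0^{-1}$ to $(\Z/\ell^r\Z)^*$ would finally produce a lift of $\alpha$, contradicting the hypothesis. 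Identifying the geometric generic fibre with a very general fibre via \cite[Lemma 2.1]{Vial} then produces an elliptic curve $E$ with $F_{n-1}((\alpha \times \beta)|_{X \times E})$ both nonzero (by \Cref{CT Refined}) and not in the image of $\mu_{\ell^r}$-coefficients; since any $\Z_\ell$-lift would a fortiori give a $\mu_{\ell^r}$-lift, this realizes the required nonzero element of $Z^{n+1}(X \times E)[\ell]$. The main obstacle is the bookkeeping required to establish closure of $\Psi$ under specialization, but every ingredient needed --- descent to a finite extension, refined specialization, and compatibility with mod-$\ell$ reduction --- is already recorded in \Cref{preliminaries}, so the proof reduces to a careful combination of these compatibilities.
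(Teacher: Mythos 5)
Your argument is correct and follows essentially the same route as the paper: both rely on \Cref{CT Refined} for non-vanishing, and both prove non-liftability by descending to a finite extension of the generic point (\Cref{refined-geometric-point}), applying \Cref{Specialization Refined} and \Cref{Residue Refined} at the $\G_m$-fibre, and invoking \Cref{liftable lemma} to contradict the hypothesis on $\alpha$. The only difference is organizational: you package the non-liftability step as a second locus $\Psi$ that is closed under specialization, in parallel with $\Phi$ from \Cref{Gabber Refined}, whereas the paper argues the same point directly by contradiction.
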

\begin{proof}
We know that $Z^{n+1}(X\times E)[\ell]\cong\frac{\Hrnr{n-1}^{2n+1}(X\times E,\mu_\ell)}{\Hrnr{n-1}^{2n+1}(X\times E,\Z)}$ and by \Cref{CT Refined} we have a non-zero class $F_{n-1}(\alpha\times\beta)\in\Hrnr{n-1}^{2n+1}(X\times E,\mu_\ell)$. By \Cref{refined-geometric-point} we know that $F_{n-1}(\alpha\times\beta)$ lifts to $\Z$ if and only if there exists a finite extenstion $\eta'\to\eta$ such that $F_{n-1}(\alpha\times\beta_{\eta'})$ lifts to $\Z$. Thus $F_{n-1}(\alpha\times\beta_{\eta'})$ lifts to $\mu_{\ell^r}$. This implies that $F_{n-1}(Res(sp(\alpha\times\beta_{\eta'})))$ lifts to $\mu_{\ell^r}$. By \Cref{liftable lemma} this implies that $Res(sp(\alpha\times\beta_{\eta'}))$ lifts to $\mu_{\ell^r}$.  But $Res(sp(\alpha\times\beta_{\eta'}))=u\times\alpha$ with $u$ a unit hence this implies that $\alpha$ lifts to $\mu_{\ell^r}$, a contradiction.

We conclude that $F_{n-1}(\alpha\times\beta)$ is non-zero in the quotient $\frac{\Hrnr{n-1}^{2n+1}(X\times E,\mu_\ell)}{\Hrnr{n-1}^{2n+1}(X\times E,\Z)}\cong Z^{n+1}(E\times X)[\ell]$.
\end{proof}

\subsection{Refined Cospecialization}


In this section we show that the dual of the cospecialization map \Cref{def dual cosp} can be defined compatibly on the refinement degrees. With this, the application on the integral Hodge/Tate conjecture (eg. \Cref{failure ITC}) could have been concluded, without making use of the comparison between the specialization and the cospecialization map \Cref{comparison-Lefschetz} and just use the cohomology class $\beta$ found in the proof of \Cref{Existence Non-zero ResidueSp} with respect to $cosp^\vee$.

\begin{prop}
Let $\mathcal X\to B$ be a smooth family over a curve and $b\in B$ a closed point. For every $j\geq0$ there is a map $\Het^i(F_jX_{\bar\eta},\Lambda)\to\Het^i(F_j X_b,\Lambda)$ compatible with $F_j\to F_{j-1}$ and equal to $cosp^\vee\colon\Het^i(X_{\bar\eta},\Lambda)\to\Het^i(X_b,\Lambda)$ for $j\gg0$.
\end{prop}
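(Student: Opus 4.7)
The plan is to adapt the construction of the refined specialization map (\Cref{Specialization Refined}) via spreading-out, but with the cospecialization of an open subfamily playing the role of the residue-with-uniformizer. Given $\alpha\in\mrH^i(F_jX_{\bar\eta},\Lambda)$, I would first use \Cref{refined-geometric-point} and \Cref{refined-generic-point} to represent $\alpha$ by some $\alpha_V\in\Het^i(V,\Lambda)$ with $V\subseteq X_{\eta'}$ open of codimension-$>j$ complement, for some finite separable extension $\eta'/\eta$. By \Cref{completion of curve}, this extends to a finite map of smooth curves $B'\to B$. Let $\overline{Z}\subseteq\mathcal X_{B'}$ be the scheme-theoretic closure of $Z:=X_{\eta'}\setminus V$; since $B'$ is Dedekind, $\overline{Z}\to B'$ is automatically flat (each of its components dominates $B'$), so for every preimage $b'\mapsto b$ the fibre $\overline{Z}_{b'}$ has dimension $\dim Z$, i.e. codimension $>j$ inside $X_{b'}$. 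Hence $\mathcal V:=\mathcal X_{B'}\setminus\overline{Z}\to B'$ is a smooth (but not proper) family each of whose fibres is an open of the corresponding $X_{b'}$ with complement of codimension $>j$.

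Next, I would apply \Cref{def dual cosp} to the smooth family $\mathcal V\to B'$ (the definition only requires smoothness, since the key input is Poincaré duality on the smooth fibres), obtaining a map $cosp^\vee\colon\Het^i(\mathcal V_{\bar\eta'},\Lambda)\to\Het^i(\mathcal V_{b'},\Lambda)$ for any chosen preimage $b'\mapsto b$. The refined cospecialization of $\alpha$ is then defined to be the image of $cosp^\vee(\alpha_V)$ in $\mrH^i(F_jX_b,\Lambda)$ under the composition $\Het^i(\mathcal V_{b'},\Lambda)\xrightarrow{(f_{b'})_\ast}\Het^i(\mathcal V_b,\Lambda)\to\mrH^i(F_jX_b,\Lambda)$, where $(f_{b'})_\ast$ transports along the isomorphism $X_{b'}\isomto X_b$ (sending $\mathcal V_{b'}$ onto an open $\mathcal V_b\subseteq X_b$ of codimension-$>j$ complement).

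Independence of the choices (the representative $\alpha_V$, the extension $\eta'/\eta$ and cover $B'$, and the preimage $b'$) would come from two compatibilities already established. First, $cosp$ on compactly supported cohomology is functorial for open embeddings (\Cref{relation cosp}), which dualizes via Poincaré duality to compatibility of $cosp^\vee$ with restriction to opens; this handles independence of the representative (comparing $V_1, V_2$ through their intersection). Second, \Cref{dualcosp-base-point-indep} gives that $cosp^\vee$ at different base points related by a finite base change differ by the corresponding Gysin pushforward, handling independence of $(B',b')$. Compatibility with $F_j\to F_{j-1}$ is automatic because the same spread $\mathcal V$ serves for both $j$ and $j-1$, and the construction factors through natural restriction maps on the target. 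Finally, for $j\gg0$ one may take $V=X_{\bar\eta}$ and $\mathcal V=\mathcal X_{B'}$, so the refined map reads $(f_{b'})_\ast\circ cosp^\vee_{B',b'}$, which by \Cref{dualcosp-base-point-indep} equals the original $cosp^\vee_{B,b}\colon\Het^i(X_{\bar\eta},\Lambda)\to\Het^i(X_b,\Lambda)$, as required.

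The main obstacle will be verifying well-definedness carefully: different choices of $(B',b',\mathcal V)$ must be shown to produce the same class in $\mrH^i(F_jX_b,\Lambda)$, which requires weaving together Poincaré duality, the open-embedding naturality of $cosp$ on compactly supported cohomology (\Cref{relation cosp}), and the finite-base-change behavior of $cosp^\vee$ from \Cref{dualcosp-base-point-indep}. All other ingredients--existence of the spreading, flatness of the closure over a Dedekind base, and applicability of \Cref{def dual cosp} to non-proper smooth families--are essentially formal.
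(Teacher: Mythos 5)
Your proposal is correct but takes a genuinely different route from the paper's. You base-change the whole family along a finite cover $B'\to B$, spread out $Z'\subseteq X_{\eta'}$ to a flat closed $\overline{Z}\subseteq\mathcal X_{B'}$, define $cosp^\vee$ for the resulting smooth open subfamily $\mathcal V\to B'$, and then descend to $B$ via the Gysin isomorphism $(f_{b'})_\ast$ using \Cref{dualcosp-base-point-indep}. The paper instead never leaves $B$: it pushes $Z'$ forward along the finite map $X_{\eta'}\to X_\eta$ to a closed $Z_\eta\subseteq X_\eta$ of the same dimension (a short dimension count), spreads $Z_\eta$ out to a flat $\mathcal Z\subseteq\mathcal X$ over $B$, and applies $cosp^\vee$ directly to $\mathcal U:=\mathcal X\setminus\mathcal Z\to B$, restricting $\alpha_U$ along the inclusion $\mathcal U_{\bar\eta}\subseteq U$. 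The paper's route avoids the Gysin descent entirely (so \Cref{dualcosp-base-point-indep} plays no role in its proof, only \Cref{functorial-cosp} for the open-embedding compatibility), at the cost of the extra argument that $\dim Z'=\dim Z_\eta=\dim Z$; your route trades that dimension count for the dependence on \Cref{dualcosp-base-point-indep} to kill the choice of $(B',b')$. Both are legitimate, and your plan correctly identifies the well-definedness check as the substantive part; the one point worth spelling out carefully in your version is that $(f_{b'})_\ast$ applied to $\Het^i(\mathcal V_{b'},\Lambda)$ must agree, on overlaps, with what one gets from a different choice of cover, which is exactly what \Cref{dualcosp-base-point-indep} combined with \Cref{relation cosp} is designed to give.
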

\begin{proof}
As expected, the proof has a similar flavour as \cite[\textsection 4.2]{Sch2}.

Let $\alpha\in\Het^i(F_jX_{\bar\eta},\Lambda)$, so there exists $F_jX_{\bar\eta}\subseteq U\subseteq X_{\bar\eta}$ with complement $Z$ so that $\codim_{X_{\bar\eta}}(Z)>j$ and $\alpha$ is represented by some $\alpha_U\in\Het^i(U,\Lambda)$. There exists a finite extension $\eta'\to\eta$ and $Z'\subseteq X_{\eta'}$ such that $Z'_{\bar\eta}=Z$. Now if we write $U':=X_{\eta'}\setminus Z'$, then also $U'_{\bar\eta}=U$.

Write $Z_\eta$ for the image of $Z'$ under the finite map $X_{\eta'}\to X_\eta$ (give it the reduced structure, as $Z'$ is reduced, we have a map $Z'\to Z_\eta^{red}$ by the universal property). Then we see that $Z'\subseteq(Z_\eta)_{\eta'}$, so $\dim(Z')\leq\dim((Z_\eta)_{\eta'})=\dim(Z_\eta)$. But we also have $\dim(Z')\geq\dim(Z_\eta)$, so we conclude that $\dim(Z')=\dim(Z_\eta)$. We also know that $\dim(Z)=\dim(Z')$.

Let $\mathcal Z$ be the closure of $Z_\eta$ in $\mathcal X$. As we work over a smooth curve, the map $\mathcal Z\to B$ is flat (\cite[Corollary 4.3.10]{Liu}) and thus $\codim_{X_b}(Z_b)>j$ as well. Let $\mathcal U$ be the complement of $\mathcal Z$ in $\mathcal X$, giving a smooth family $\mathcal U\to\mathcal B$. This family then gives the specialization map $\Het^i(\mathcal U_{\bar\eta},\Lambda)\overset{cosp^\vee}{\to}\Het^i(\mathcal U_b,\Lambda)$. As $U_b:=\mathcal U_b$ is the complement of $Z_b$ in $X_b$, there is a natural map $\Het^i(U_b,\Lambda)\to\Het^i(F_jX_b,\Lambda)$. We also compute that $\mathcal U_{\bar\eta}=X_{\bar\eta}\setminus (Z_\eta)_{\bar\eta}\subseteq X_{\bar\eta}\setminus Z=U$, so we define the image of $\alpha$ to be the image of $\alpha_U$ under the composition $\Het^i(U,\Lambda)\to\Het^i(\mathcal U_{\bar\eta},\Lambda)\to\Het^i(U_b,\Lambda)\to\Het^i(F_jX_b,\Lambda)$.

We are left to check that this construction is compatible with restriction to opens. Suppose we have $F_jX_{\bar\eta}\subseteq U\subseteq W\subseteq X_{\bar\eta}$. Using the same construction as above, we now obtain an inclusion of families $\mathcal U\subseteq\mathcal W\to B$ and we have to show that the following diagram commutes
\[
\begin{tikzcd}
\mrH_c^\ast(U_b,\Lambda)\ar[r,"cosp"]\ar[d]&\mrH_c^\ast(\mathcal U_{\bar\eta},\Lambda)\ar[d]\\
\mrH_c^\ast(W_b,\Lambda)\ar[r,"cosp"]&\mrH_c^\ast(\mathcal W_{\bar\eta},\Lambda)
\end{tikzcd},
\]
where the vertical maps are the natural maps given by $j_!j^\ast\to \id$ for $j\colon U\hookrightarrow W$ the inclusion of an open. As the Poincaré dual of this map is the restriction to an open on cohomology, the commuativity indeed shows the claim after applying the restriction morphisms $\Het^i(U,\Lambda)\to \Het^i(\mathcal U_{\bar\eta},\Lambda)$ and $\Het^i(W,\Lambda)\to \Het^i(\mathcal W_{\bar\eta},\Lambda)$.

Let $j\colon\mathcal U\to \mathcal W$ be the inclusion and $f_{\mathcal U}$ and $f_{\mathcal W}$ be the maps to $B$ so that $f_{\mathcal W}\circ j=f_{\mathcal U}$. Then the vertical maps are given by taking the stalks at $b$ and $\bar\eta$ of the natural morphism $R(f_{\mathcal U})_!\Lambda=R(f_{\mathcal W})_!j_!j^\ast\Lambda\to R(f_{\mathcal W})_!\Lambda$. Using the functoriality of the cospecialization map \Cref{functorial-cosp}, we can apply it on this morphism, which precisely gives the commutativity of the diagram above.
\end{proof}

\subsection{Relation with the Étale Fundamental Group}
Here we note that we could have obtained \Cref{cohom-class-Enriques} without having to compute the full cohomology of an Enriques surfaces. After all, \cite[Table below Theorem 1.4.10]{Enr-Surf} gives that $\pi_1^{\e t}(S)\cong\Z/2\Z$. This implies that $\Het^1(S,\mu_{2^r})\cong\Hom_{\Z/2^r\Z}(\pi^{\e t}_1(S),\Z/2^r\Z)\cong\Z/2\Z$ and thus that $\Het^3(S,\mu_{2^r})\cong\Z/2\Z$. Now the Bockstein long exact sequence corresponding to $0\to\mu_2\to\mu_4\to\mu_2\to0$ gives us
\[
\Het^2(S,\mu_4)\to\Het^2(S,\mu_2)\twoheadrightarrow\Het^3(S,\mu_2)\overset{0}{\to}\Het^3(S,\mu_4)\isomto\Het^3(S,\mu_2)\overset{0}{\to}\Het^4(S,\mu_2)\hookrightarrow\Het^4(S,\mu_4),
\]
implying that $\Het^2(S,\mu_4)\to\Het^2(S,\mu_2)$ cannot be surjective.

The above argument only uses that $\pi_1^{\e t}(S)$ has some torsion class. Combining this with \Cref{cor-gen-IHC}, we obtain the following.
\begin{cor}
Let $S$ be a surface over an algebraically closed field $k$ with $\mathrm{char}(k)\neq\ell$. If $\pi_1^{\e t}(S)[\ell]\neq 0$, then the integral Tate conjecture fails for $X\times S$, where $X$ is the geometric generic fibre of a Lefschetz pencil as in \Cref{general statement}.\qed
\end{cor}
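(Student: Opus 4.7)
The plan is to imitate the Enriques calculation from the paragraph immediately preceding the Corollary, and then feed the output into \Cref{cor-gen-IHC}(ii). Since $k$ is algebraically closed and $\mu_\ell \cong \Z/\ell\Z$ (non-canonically), one has $\Het^1(S,\mu_{\ell^r}) \cong \Hom_{\mathrm{cts}}(\pi_1^{\e t}(S),\Z/\ell^r\Z)$. The hypothesis $\pi_1^{\e t}(S)[\ell] \neq 0$ — interpreted through the $\ell$-adic abelianization, which is equivalent to $\pi_1^{\e t}(S)^{\mathrm{ab}}/\ell \neq 0$ for profinite abelian groups by the structure theorem — yields $\Het^1(S,\mu_\ell) \neq 0$. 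By Poincaré duality on the smooth projective surface $S$, one also gets $\Het^3(S,\mu_\ell) \cong \Het^1(S,\mu_\ell)^\vee \neq 0$. A witnessing homomorphism $\phi: \pi_1^{\e t}(S)^{\mathrm{ab}} \to \Z/\ell\Z$ that detects an $\ell$-torsion element cannot lift to a $\Z/\ell^2\Z$-valued homomorphism, so the reduction map $\Het^1(S,\mu_{\ell^2}) \to \Het^1(S,\mu_\ell)$ is not surjective; equivalently, the Bockstein $\delta: \Het^1(S,\mu_\ell) \to \Het^2(S,\mu_\ell)$ is non-zero.

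The main technical step is to promote this to non-surjectivity of $\Het^2(S,\mu_{\ell^2}) \to \Het^2(S,\mu_\ell)$, i.e.\ to show $\delta: \Het^2(S,\mu_\ell) \to \Het^3(S,\mu_\ell)$ is non-zero. My plan is to use Poincaré self-duality of the Bockstein. Since $\dim S = 2$, one has $\Het^5(S,\mu_\ell^{\otimes 2}) = 0$, so the derivation property of the Bockstein applied to a pairing $\alpha \cup \beta \in \Het^4(S,\mu_\ell^{\otimes 2})$ yields $0 = \delta(\alpha \cup \beta) = \delta(\alpha) \cup \beta \pm \alpha \cup \delta(\beta)$. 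Because the coefficient sequence $0 \to \mu_\ell \to \mu_{\ell^2} \to \mu_\ell \to 0$ is itself self-dual, this identity identifies the degree-one Bockstein with the degree-two Bockstein as Poincaré adjoints up to sign. Non-vanishing of the former, established above, therefore gives non-vanishing of the latter, and hence the desired failure of surjectivity of $\Het^2(S,\mu_{\ell^2}) \to \Het^2(S,\mu_\ell)$.

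With that in place, \Cref{cor-gen-IHC}(ii) applied with $Y = S$, $i = 1$, $r = 2$ gives $Z^{n+1}(X \times S)[\ell] \neq 0$, which by \Cref{Tate classes} translates directly into the failure of the integral Tate conjecture for $X \times S$. The main obstacle I anticipate is the Poincaré self-duality step for the Bockstein: one should verify that the cup-product/trace compatibilities genuinely intertwine the two connecting morphisms, rather than up to some unidentified unit, and that the twist $\mu_\ell^{\otimes 2} \cong \mu_\ell$ (non-canonical over $\bar k$) is handled consistently. Every other step is either an explicit reference to the preceding material or a direct consequence of the hypothesis.
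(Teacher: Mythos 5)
Your overall strategy — deduce non-vanishing of the degree-two Bockstein $\delta^{2}\colon\Het^{2}(S,\mu_\ell)\to\Het^{3}(S,\mu_\ell)$ from the fundamental group hypothesis and feed the resulting non-liftable class in $\Het^{2}(S,\mu_\ell)$ into \Cref{cor-gen-IHC}(ii) and \Cref{Tate classes} — is the same as the paper's, and the final application is handled correctly. The paper reaches the non-surjectivity of $\Het^{2}(S,\mu_{\ell^2})\to\Het^{2}(S,\mu_\ell)$ by a direct chase through the Bockstein long exact sequence
\[
\Het^{2}(S,\mu_{\ell^2})\to\Het^{2}(S,\mu_\ell)\to\Het^{3}(S,\mu_\ell)\to\Het^{3}(S,\mu_{\ell^2})\to\Het^{3}(S,\mu_\ell)\to\Het^{4}(S,\mu_\ell)\hookrightarrow\Het^{4}(S,\mu_{\ell^2}),
\]
using the injectivity on the right (trace compatibility) and the equality of cardinalities $\Het^{3}(S,\mu_{\ell^2})\cong\Het^{3}(S,\mu_\ell)$. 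You instead propose to show $\delta^{2}$ is the Poincar\'e adjoint of $\delta^{1}$ and transport non-vanishing across. That is a genuinely different and in principle cleaner route — but the step where you establish the adjointness is flawed.

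The problem is that the identity you invoke is vacuous. You apply the Leibniz rule to a class $\alpha\cup\beta\in\Het^{4}(S,\mu_\ell^{\otimes 2})$ and cite $\Het^{5}(S,\mu_\ell^{\otimes 2})=0$. But if $|\alpha|+|\beta|=4$, then $\delta\alpha\cup\beta$ and $\alpha\cup\delta\beta$ both already land in $\Het^{5}(S,\mu_\ell^{\otimes 2})=0$, so $\delta\alpha\cup\beta\pm\alpha\cup\delta\beta=0$ records no information whatsoever: it is $0+0=0$. It cannot identify $\delta^{1}$ and $\delta^{2}$ as Poincar\'e adjoints. The usable identity comes from taking $\alpha\in\Het^{1}(S,\mu_\ell)$, $\beta\in\Het^{2}(S,\mu_\ell)$, so $\alpha\cup\beta\in\Het^{3}(S,\mu_\ell^{\otimes 2})$, and then using that the degree-three Bockstein $\delta\colon\Het^{3}(S,\mu_\ell^{\otimes 2})\to\Het^{4}(S,\mu_\ell^{\otimes 2})$ vanishes, so that Leibniz gives $\delta^{1}\alpha\cup\beta=\pm\alpha\cup\delta^{2}\beta$ in $\Het^{4}(S,\mu_\ell^{\otimes 2})\cong\Z/\ell\Z$; applying the trace then shows $\delta^{2}$ is the Poincar\'e adjoint of $\delta^{1}$ up to sign. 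The vanishing of this degree-three Bockstein is genuinely needed and is exactly the injectivity $\Het^{4}(S,\mu_\ell^{\otimes 2})\hookrightarrow\Het^{4}(S,\mu_{\ell^2}^{\otimes 2})$ that the paper invokes via compatibility of the trace with the coefficient map — so once the gap is filled, your argument and the paper's rest on the same non-trivial input. You should also be careful that the hypothesis ``$\pi_1^{\e t}(S)[\ell]\neq 0$'' really does produce a continuous homomorphism to $\Z/\ell\Z$ that fails to lift; as stated this needs $\ell$-torsion in $\pi_1^{\e t}(S)^{\mathrm{ab}}$, or at least a cyclic $\ell$-torsion direct summand, before $\delta^{1}\neq 0$ follows by the structure theory you gesture at.
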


\bibliographystyle{abbrv}
\bibliography{refs}

\end{document}